\newtheorem{theorem}{Theorem}[section]
\newtheorem{lemma}[theorem]{Lemma}
\newtheorem{corollary}[theorem]{Corollary}
\theoremstyle{remark}
\newtheorem{remark}[theorem]{Remark} 
\newtheorem{example}[theorem]{Example}
\definecolor{myred}{rgb}{1,0.1,0.2}
\newcommand {\mybf}{}
\numberwithin{equation}{section}
\newcommand{\kG}{{\mathcal G}}
\newcommand{\kX}{{\mathcal X}}
\newcommand{\kE}{{\mathcal E}}
\newcommand{\kY}{{\mathcal Y}}
\newcommand{\kB}{{\mathcal B}}
\newcommand{\kC}{{\mathcal C}}
\newcommand{\kV}{{\mathcal V}}
\newcommand{\kW}{{\mathcal W}}
\newcommand{\kD}{{\mathcal D}}
\newcommand{\kL}{{\mathcal L}}
\newcommand{\kH}{{\mathcal H}}
\newcommand{\kU}{{\mathcal U}}
\newcommand{\kI}{{\mathcal I}}
\newcommand{\kZ}{{\mathcal Z}}
\newcommand{\kCb}{\kC_{\operatorname{b}}}
\newcommand  {\D}{{\mathrm D}}
\renewcommand{\d}{{\mathrm d}}
\renewcommand{\i}{{\mathrm i}}
\newcommand  {\e}{{\mathrm e}}
\newcommand {\rn}{{\mathrm n}}
\newcommand {\rb}{{\mathrm b}}
\renewcommand{\a}{{\mathsf a}}
\newcommand {\mS}{{\mathsf S}}
\newcommand {\mP}{{\mathsf P}}
\newcommand {\mQ}{{\mathsf Q}}
\renewcommand{\b}{{\mathsf b}}
\newcommand  {\1}{\mathbbm{1}}
\newcommand  {\R}{{\mathbb R}}
\newcommand  {\N}{{\mathbb N}}
\newcommand  {\C}{{\mathbb C}}
\newcommand  {\Z}{{\mathbb Z}}
\renewcommand{\P}{{\mathbb P}}
\newcommand  {\Q}{{\mathbb Q}}
\newcommand {\id}{\operatorname{id}}
\newcommand {\spec}{\operatorname{spec}}
\renewcommand {\Re}{\operatorname{Re}}
\renewcommand {\Im}{\operatorname{Im}}
\newcommand{\dist}{\operatorname{dist}}
\newcommand{\interior}{\operatorname{int}}
\newcommand{\cl}{\operatorname{cl}}
\newcommand{\nb}{{\operatorname{nb}}}
\newcommand{\norm}[3][]{%
  \mathchoice{\lVert #2 \rVert_{#3}^{#1\vphantom\int}}%
             {\lVert #2 \rVert_{#3}^{#1}}{}{}}
\newcommand{\tnorm}[2]{%
  \lVert #1 \rVert_{#2}}
\def\vvvert{\hbox{\ensuremath{|\hspace{-0.16em}|\hspace{-0.16em}|}}}
\newcommand{\Norm}[2]{%
  \mathchoice{\vvvert #1 \vvvert\vphantom\vert_{#2}^{\vphantom\int}}%
             {\vvvert #1 \vvvert\vphantom\vert_{#2}}{}{}}
\begin{document}
\title{\emph{A}-stable Runge--Kutta methods for semilinear evolution
equations} 

\author[M. Oliver]{Marcel Oliver}
\address[M. Oliver]%
{School of Engineering and Science \\
 Jacobs University \\
 28759 Bremen \\
 Germany}
\email{oliver@member.ams.org}

\author[C. Wulff]{Claudia Wulff}
\address[C. Wulff]%
{Department of Mathematics\\
 University of Surrey \\
 Guildford GU2 7XH \\
 UK}
\email{c.wulff@surrey.ac.uk}

\keywords{Semilinear evolution equation, smoothness of semiflow,
$A$-stable Runge Kutta semidiscretization in time}

\date{\today}

\begin{abstract}  
We consider semilinear evolution equations for which the linear part
generates a strongly continuous semigroup and the nonlinear part is
sufficiently smooth on a scale of Hilbert spaces.  In this setting, we
prove the existence of solutions which are temporally smooth in the
norm of the lowest rung of the scale for an open set of initial data
on the highest rung of the scale. Under the same assumptions, we prove
that a class of implicit, $A$-stable Runge--Kutta semidiscretizations
in time of such equations are smooth as maps from open subsets of the
highest rung into the lowest rung of the scale.  Under the additional
assumption that the linear part of the evolution equation is normal or
sectorial, we prove full order convergence of the semidiscretization
in time for initial data on open sets. Our results apply, in
particular, to the semilinear wave equation and to the nonlinear
Schr\"odinger equation.
\end{abstract}

\maketitle

\tableofcontents

\section{Introduction}
\label{s.intro}
We study numerical schemes for evolution equations on Hilbert spaces
by first looking at the properties of a semidiscretization in time
only; discretization in space is then treated as a perturbation within
the Hilbert space setting.  This approach was introduced by Rothe
\cite{Rothe}.  When successful, results so obtained are naturally
uniform in the spatial discretization parameter.  In contrast, when
first discretizing in space, the resulting finite dimensional system
of ordinary differential equations (ODEs) can be treated with
ODE-based techniques which avoids the difficulty arising from the
analysis of equations on infinite-dimensional spaces, but where
uniformity in the spatial mesh size is not immediate.

In this paper, we consider semilinear evolution equations of the form
\[
  \partial_t U = AU + B(U)
\]
on a \mybf{Hilbert} space $\kY$.  The linear operator $A$ is assumed
to generate a strongly continuous, not \mybf{necessarily} analytic
semigroup and $B$ is a bounded nonlinear operator on $\kY$.  The
examples we have in mind are semilinear Hamiltonian evolution
equations such as the semilinear wave equation or the nonlinear
Schr\"odinger equation \mybf{with periodic, homogeneous Dirichlet, or
homogeneous Neumann boundary conditions, or on the line.}  However,
the results in this paper do not depend on a Hamiltonian structure.

We analyze the differentiability properties in initial value and time
step of \mybf{the semiflow of the evolution equation and of}
a large class of $A$-stable Runge--Kutta methods, including the
Gauss--Legendre methods, when applied to the evolution equation.  To
be able to differentiate the semiflow and the numerical method we
formulate conditions that guarantee uniformity of the time-interval of
existence (for the semiflow) and the maximum step size (for the
numerical methods) over bounded sets of parameters.  We present two
versions of such uniformity results: Whenever existence can be
achieved, uniformity holds on sufficiently small balls of initial
data; we will label results of this type by ``local version.''
Assuming more regularity for the initial data, we also obtain results
which are uniform on bounded open sets so long as $B$ is well-defined
and bounded.  We will label results of this type by ``uniform
version.''

Note that differentiation in time results in multiplication with the
unbounded operator $A$ and is only well-defined when considered as a
map from a subset of $D(A)$ to $\kY$; this is easily seen by taking $B
\equiv 0$ and differentiating the exact semiflow $\e^{tA}U^0$.  To be
able to differentiate repeatedly in time we assume that $B$ is
$\kC^{N-k}$ as map from some open set $\kD_k \subset \kY_k \equiv
D(A^k)$ to $\kY_k$ for $k=0, \ldots, K$ and $N>K$.  Whether or not
this condition is satisfied depends on the given evolution equation,
and in particular on the boundary conditions; it is satisfied for the
equations mentioned above in the case of periodic boundary conditions
and smooth nonlinearities.  \mybf{We also give examples of PDEs with
Neumann boundary conditions, Dirichlet boundary conditions, and on the
line where this condition is true.}  We then prove that the semiflow
of the evolution equation and the numerical method are of class
$\kC^K$ jointly in time (resp.\ step size) and initial data when
considered as a map from $\kD_K$ to $\kY$.  Both results require
carefully tracking the domains of definition of $B$.  Moreover, under
the additional assumption that $A$ is normal \mybf{(or, more
generally, normal up to a perturbation which is a bounded linear
operator on each of the $\kY_k$) or that $A$ is sectorial}, we show
convergence of the semidiscretization in time \mybf{at its full order
$p$ provided $K=p$ and for initial data $U^0 \in \kD_{K+1}$. 

The exact solution $U(t)$ of the semilinear evolution equation is
obtained as a fixed point of a contraction map.  Similarly, the
Runge--Kutta methods we consider are implicit as they are functions of
the Runge--Kutta stage vectors, which in turn are obtained as fixed
points of contraction maps.  As for the exact solution,
differentiation in the step size of the Runge--Kutta method results in
multiplication by the unbounded operator $A$.  Hence, these
derivatives are also only well-defined on the scale of \mybf{Hilbert} spaces
$\kY_k$.  An additional difficulty arises from the fact that the
semiflow, the numerical method, the contraction maps for semiflow and
stage vectors, and their derivatives with respect to the initial data
are only strongly continuous in the time-like parameter, but not
continuous in the operator norm.  Hence, these maps do not fit into
the usual setting of contraction mapping theorems with parameters.  We
therefore address these issues by providing an abstract theory for the
differentiability properties of fixed points of contraction mappings
on a scale of Banach spaces.  This theory provides a unified framework
for the time-continuous and time-semidiscrete case.
 
Let us mention some related results in the literature.  Le Roux
\cite{LeRoux} studies convergence results for strongly $A$-stable
approximations $\mS(hA)$ of holomorphic semigroups $\e^{hA}$ on
Hilbert spaces, an example of which are strongly $A$-stable
Runge--Kutta methods applied to linear parabolic systems.  Palencia
\cite{Palencia} and Crouzeix \emph{et al.}\ \cite{Crouzeix93} study
stability of A-acceptable rational approximations $\mS(hA)$ of
holomorphic semigroups $\e^{hA}$ on Banach spaces; they show that when
$\Re (\spec A ) \leq \omega$ for some $\omega>0$, then $\lVert
\mS^n(hA) \rVert \leq \Theta_\mS \, \e^{\omega_\mS nh}$ for some
$\omega_\mS>0$, $\Theta_\mS>0$, and all $n \in \N$.  Lubich and
Ostermann \cite{LubOst93} prove convergence results for Runge--Kutta
methods applied to semilinear parabolic equations on Banach spaces,
cf.\ \cite{CrouzeixSemilinPar}.  Variable step size schemes applied to
fully nonlinear parabolic problems have been studied in
\cite{OstQuasilinVarStepRK}.  Gonz\'alez and Palencia
\cite{RKQuasilinPar} study stability of $A$-stable Runge--Kutta
methods in the initial value, as we do, but they study quasilinear
parabolic problems and do not consider the differentiability
properties of the solution.  Akrivis and Crouzeix
\cite{CrouzeixMultistep} discuss multistep semidiscretizations in time
for parabolic problems; see references therein for further related
work.

In \cite{HershKato}, quoted above, Hersh and Kato prove convergence of
$A$-acceptable rational approximations $\mS(hA)$ of non-analytic
$C_0$-semigroups $\e^{hA}$ for smooth initial data.  Brenner and
Thom\'ee \cite{BrennerThomee} show that $A$-acceptable rational
approximations $\mS(hA)$ of non-analytic $C_0$-semigroups $\e^{hA}$
\mybf{with $\Re (\spec A) \leq 0$} in general grow like $\lVert
\mS^n(hA) \rVert = O(n^{1/2})$ and study fractional order convergence
for non-smooth initial data of linear evolution equations, see also
\cite{Kovacs07}; for extensions to variable step size, see
\cite{VarStepsizeRK}.  Brenner \emph{et al.}\ \cite{BCT82} study
convergence of rational approximations of inhomogeneous linear
differential equations on Banach spaces, assuming stability of the
approximation.  Colin \emph{et al.}\ \cite{Time-semi-NLS,Time-semi-Zakh}
study modified Crank--Nicolson semidiscretizations in time of
nonlinear Schr\"odinger equations and Zakharov wave equations.  They
prove convergence as $h\to 0$, but do not analyze the order of
convergence.

In this paper we have a related, but different objective.  Similarly
as in \cite{BrennerThomee,HershKato,Kovacs07} we consider
semidiscretizations in time of evolution equations which are not
parabolic, i.e., equations whose linear part does not generate an
analytic semigroup.  But whereas the main issue in
\cite{BrennerThomee,HershKato,Kovacs07} is that the spectral theorem
is not available for the linear operator $A$ so that stability
estimates of the form \mybf{$\lVert \mS^n(hA) \rVert \leq \Theta_\mS \,
\e^{\omega_\mS nh}$}, as required for the standard convergence analysis,
are not available, we assume here, like \cite{BCT82}, that this
estimate holds true, e.g.\ due to normality of $A$ \mybf{on the Hilbert
space $\kY$.}  Our focus is rather on semilinear problems as were
considered by Lubich and Ostermann \cite{LubOst93} in the parabolic
case; the class of Runge--Kutta schemes considered here is the same as
in their work.  \mybf{However, while \cite{LubOst93,CrouzeixSemilinPar}
assume the existence of a temporally smooth solution $U(t)$ of the
semilinear evolution equation (or a perturbation of it)
to obtain higher order convergence, we
provide a detailed analysis under which conditions this
assumption holds true.}

Our conditions on $B$ yield, in particular, $\kC^K$ smoothness of the
semiflow jointly in time and in the initial data for initial values in
an open set of a Hilbert space $\kY_K$.  If the conditions on $B$ are
not satisfied, the set of initial values of temporally smooth
solutions is generally a complicated set which is characterized by
nonlinear conditions; hence such initial data are in general difficult
to prepare numerically.  We illustrate this for the semilinear wave
equation with generic nonlinearity and Dirichlet boundary conditions;
see Section~\ref{sss.Dirichlet-NWE}.  Under the same conditions on
$B$, differentiability of the numerical method in the step size $h$
and in the initial data holds on open sets.  This allows us to prove
convergence of the numerical method without additional stage order
conditions as have been assumed in \cite{LubOst93}.  Moreover, we
obtain full order convergence, whereas e.g.\ the convergence results
for semilinear PDEs of \cite{LubOst93} provide an order of convergence
that is determined by the stage order and that, in general, is smaller
than the order of the method.

Lubich and Ostermann \cite{LubOst93}, in the parabolic setting, also
obtain convergence results when only a perturbation of the continuous
solution $U(t)$ is temporally smooth, and their estimate of the
trajectory error then also depends on this perturbation error.  In
practice, such a perturbation would typically be a space
discretization; if the continuous solution lacks temporal smoothness,
the assumption of a temporally smooth solution of a perturbation
tending to zero with the step size $h$ typically imposes mesh
conditions that exclude order $p$ convergence of the
semidiscretization in time.

In contrast to \cite{BrennerThomee,LubOst93}, our interest in this
paper is not on fractional order} convergence for non-smooth initial
data.  Rather, since we are interested in obtaining higher order
differentiability of the numerical method in the time step, we
restrict attention to regular initial data $U^0 \in \kY_{K+1}$; in
particular, we assume enough regularity to have full order of
convergence, i.e., $K \geq p$ where $p$ is the order of the numerical
method.  Our convergence result extends the corresponding classical
result for linear evolution equations of Hersh and Kato
\cite{HershKato} to nonlinear systems.

There has been a lot of recent activity in the application of split
step time-semidiscretizations of nonlinear Schr\"odinger and wave
equations: Besse \emph{et al.}\ \cite{Besse2002} and Lubich
\cite{Lubich2008} study convergence of split step
time-semidiscretizations for nonlinear Schr\"odinger equations; also
see \cite{Ostermann09} for a general framework in the linear case and
more references, and \cite{Gaukler,Faou-splitPDE} for long-time
preservation of actions of nonlinear Schr\"odinger equations under
split step time-semidiscretizations.  While splitting methods are very
effective for simulating evolution equations for which the linear
evolution $\e^{tA}$ can easily be computed explicitly, Runge--Kutta
methods are still a good choice when an eigendecomposition of $A$ is
not available, as for example for the semilinear wave equation in an
inhomogeneous medium; see Section~\ref{sss.swe-nonhom}.  Moreover, the
simplest example of a Gauss--Legendre Runge-Kutta method, the implicit
mid point rule, appears to have some advantage over split step
time-semidiscretizations for the computation of wave trains for
nonlinear Schr\"odinger equations \cite{Herbst,Herbst2} because the
latter introduce an artificial instability while the former reproduces
recurrences well.

In this paper, we shall hence restrict our
attention to Runge--Kutta methods which have a long history as robust
and effective time integrators for both ODEs and PDEs; see, e.g.,
\cite{Smith86,CFD}.  Gauss--Legendre Runge--Kutta methods, in
particular, have attracted attention as they are symplectic and yield
multisymplectic space-time schemes for PDEs \cite{BridgesReich06}.

\mybf{While we restrict attention to evolution equations on Hilbert
spaces $\kY$, our results on differentiability of the semiflow and of
the numerical method also hold true when $\kY$ is a Banach
space.  However, the stability condition $\lVert \mS^n(hA) \rVert \leq
\Theta_\mS \, \e^{n \omega_\mS h}$, which we need for our convergence
result, is quite restrictive in the Banach space setting, as discussed
above.}

The paper is organized as follows.  In Section~\ref{s.pde}, we
introduce the class of semilinear evolution equations considered,
and study the differentiability
properties of the semiflow of these evolution equations.  We also 
\mybf{present a general result on  
the differentiability of superposition operators. We then show}  
how the semilinear wave equation and the nonlinear Schr\"odinger
equation  with different types of boundary conditions
fit into this framework.  In
Section~\ref{s.rk}, we derive corresponding statements on the
well-posedness, differentiability properties, and convergence of
$A$-stable Runge--Kutta methods when applied to such evolution
equations.  In Appendix \ref{s.appendix}, we present a number of technical results,
most notably a contraction mapping theorem on a scale of Banach
spaces, which are needed in the main body of the paper.

%%%%%%%%%%%%%%%%%%%%%%%%%%%%%%%%%%%%%%%%%%%%%%%%%%%%%%%%%%%%%%%%%%%%%%%%
%%%%%%%%%%%%%%%%%%%%%%%%%%%%%%%%%%%%%%%%%%%%%%%%%%%%%%%%%%%%%%%%%%%%%%%%
%%%%%%%%%%%%%%%%%%%%%%%%%%%%%%%%%%%%%%%%%%%%%%%%%%%%%%%%%%%%%%%%%%%%%%%%
%%%%%%%%%%%%%%%%%%%%%%%%%%%%%%%%%%%%%%%%%%%%%%%%%%%%%%%%%%%%%%%%%%%%%%%%

\section{Semilinear evolution equations}
\label{s.pde}
 
In this section, we set up the framework for a class of semilinear
evolution equations whose time discretization we analyze subsequently.
After introducing some notation (Section~\ref{ss.notation}) and
setting up the general functional framework in
Sections~\ref{ss.genSett}, we provide a setting in which the semiflow
is differentiable with respect to the initial data as well as time
(Section~\ref{ss.diffSemiflow}).  In many examples, the nonlinearities
are superposition operators of nonlinear functions; we collect their
fundamental properties in Section~\ref{ss.superpos}.  These results
enable us to fit our two main examples, the semilinear wave equation
(Section~\ref{ss.swe}) and the nonlinear Schr\"odinger equation
(Section~\ref{ss.nse}), into the abstract framework.

%%%%%%%%%%%%%%%%%%%%%%%%%%%%%%%%%%%%%%%%%%%%%%%%%%%%%%%%%%%%%%%%%%%%%%%%%

\subsection{Some notation}
\label{ss.notation}

Let $\kY$ be a Banach space.  We write
\[
  \kB^\kY_R (U^0) =  \{ U \in \kY \colon \norm{U-U^0}{\kY} \leq R \} 
\]
to denote the closed ball of radius $R$ around $U^0 \in \kY$.  (If no
confusion about the space is possible, we may drop the superscript, or
write $\kB_R(U_0) \subset \kY$ instead of $\kB_R^\kY(U_0)$.)  Let $\kD
\subset \kY$ be open.  We define
\begin{equation}
  \label{e.kD-delta}
  \kD^{-\delta}  
  = \{ U \in \kD  \colon 
       \dist_{\kY } (U, \partial \kD ) > \delta \} \,,
\end{equation}
\mybf{where $\dist_{\kY}(U, \kD) = \inf_{W \in \kD} \norm{U-W}{\kY}$ denotes
the distance between a point $U \in \kY$ and the set $\kD \subset \kY$
measured in the $\kY$-norm.}

For Banach spaces $\kX$ and $\kY$, and $j \in \N_0$, we write
$\kE^j(\kY,\kX)$ to denote the vector space of $j$-multilinear bounded
mappings from $\kY$ to $\kX$; we set $\kE^j(\kX) \equiv
\kE^j(\kX,\kX)$.

For Banach spaces $\kX$, $\kY$, and $\kZ$, and open subsets $\kU
\subset \kX$, $\kV \subset \kY$, and $\kW \subset \kZ$, we write
\[
  F \in \kC^{(\underline{m},n)} (\kU \times \kV; \kW)
\]
to denote a continuous function $F \colon \kU \times \kV \to \kW$
whose partial Fr\'echet derivatives $\D_X^i \D_Y^j F(X,Y)$ exist and
are such that the maps
\begin{equation}
  \label{e.derivativemaps}
  (X, Y, X_1,\ldots, X_{i}) 
  \mapsto \D_X^i \D_Y^j  F(X,Y)(X_1, \ldots, X_{i}) 
\end{equation}
are continuous from $\kU \times \kV \times \kX^{i}$ into $\kE^j(\kY,
\kZ)$ for $i = 0, \dots, m$ and $j = 0,\dots, n$.  In particular, all
directional derivatives are continuous.   We write
\[
  F \in \kCb^{(\underline{m},n)}(\kU \times \kV; \kW)
\]
if, in addition, the partial Fr\'echet derivatives are bounded and the
maps \eqref{e.derivativemaps} extend continuously to the boundary.
(The latter is important as we will apply the contraction mapping
theorem to maps in such classes.)  As usual, we write
\[
  F \in \kC^{(m,n)} (\kU \times \kV; \kW)
\]
to denote that the partial Fr\'echet derivatives up to order $(m,n)$
exist and are continuous in the norm topology; we write $\kCb^{(m,n)}$
if these derivatives are, in addition, bounded and extend continuously
to the boundary.  If any of the sets is not open, we define
\[
  \kC^{(m,n)}(\kU \times \kV; \kW) 
  \equiv \kC^{(m,n)}(\interior(\kU) \times \interior(\kV); 
    \interior(\kW)) \,,
\]
where $\interior(\kU)$ denotes the interior of $\kU$, with analogous
notation for the $\kCb$-spaces.  
\mybf{ The spaces $\kC^m(\kU;\kW)$ and $\kCb^m(\kU;\kW)$ are defined likewise.}

Note that $\kC^{(\underline{m},n)}(\kU \times \kV; \kW) =
\kC^{(m,n)}(\kU \times \kV; \kW)$ only if $\kX$ is finite-dimensional.
In general,
\begin{subequations}
\begin{equation}
  \label{e.cont-a}
  \kCb^{({m},n)} (\kU \times \kV; \kW)
  \supset \kCb^{(\underline{m+1},n)} (\kU \times \kV; \kW)
  \cap \kCb^{(\underline{m},n+1)} (\kU \times \kV; \kW)
\end{equation}
because any differentiable function is continuous.  Moreover,
\begin{equation}
  \label{e.cont-b}
  \kCb^{(\underline{0},k)} (\kU \times \kV; \kW) = 
  \kCb^{({0},k)} (\kU \times \kV; \kW) \,.
\end{equation}
\end{subequations}
In the above, $\kV$ will typically be some interval of time.  

\subsection{General setting}
\label{ss.genSett}

We consider semilinear evolution equations on a \mybf{Hilbert} space $\kY$,
\begin{equation}
  \label{e.pde}
  \partial_t U  = F(U)  = AU + B(U) \,,
\end{equation}
where $U \colon [0,T] \to \kY$.  Equation \eqref{e.pde} formally
looks like an ODE, but will be thought of as being posed on an
infinite-dimensional function space $\kY$. 

Our main examples are the following.

\begin{example}[Semilinear wave equation] \label{ex:SWE}
For the semilinear wave equation 
\begin{equation}
  \label{e.swe}
  \partial_{tt}u = \partial_{xx} u - f(u) \,,
\end{equation} 
we write $v = \partial_t u$ and $U=(u,v)^T$ which, for $t$ fixed,
shall be an element of a Hilbert space $\kY$ to be specified later, so
that
\begin{equation}
  \label{e.AB}
  A =
  \begin{pmatrix}
  0 & \id \\ \partial^2_x & 0
  \end{pmatrix} 
  \quad \text{and} \quad
  B(U) = 
  \begin{pmatrix}
    0 \\ -f(u)
  \end{pmatrix} \,.
\end{equation}
\end{example}

\begin{example}[Nonlinear Schr\"odinger equation] \label{ex:NSE}
For the nonlinear Schr\"odinger equation
\begin{equation}
  \label{e.nse}
  \i \, \partial_t u 
  = - \partial_{xx} u + \partial_{\overline{u}} V(u,\overline{u}) \,,
\end{equation}
we set $U \equiv u$, so that
\begin{equation}
  \label{e.nlsdefs}
  A = \i \, \partial^2_x 
  \quad \text{and} \quad
  B(U) = -\i \, \partial_{\overline{u}} V(u,\overline{u}) \,. 
\end{equation}
\end{example}

In the following, we introduce the framework in which we obtain
 smooth solutions of \eqref{e.pde}.  Later, in
Sections~\ref{ss.swe} and~\ref{ss.nse}, we show how the semilinear
wave equation and the nonlinear Schr\"odinger equation as formally
introduced above fit into this framework.  It is well known that the
following conditions imply the existence of a semiflow of
\eqref{e.pde}.

\begin{itemize}
\item[(A0)] $A$ is a closed, densely defined linear operator on $\kY$
and generates a $C_0$-semigroup on $\kY$.
\item[(B0)] $B \colon \kD \to \kY$ is Lipschitz on some open set $\kD
\subset \kY$.
\end{itemize}
For the definition of strongly continuous semigroups
($C_0$-semigroups) and detailed proofs, see, e.g., \cite{Pazy}.  For
our purposes, the main points can be summarized as follows.

Condition (A0) implies, in particular, that there exist constants
$\omega$ and $\Theta$ such that for every $t \geq 0$
\begin{equation}
  \label{e.etAEstimate}
  \lVert \e^{tA} \rVert \leq \Theta \, \e^{\omega t} 
\end{equation}
with $\Re (\spec A)\leq \omega$.  Moreover, for every $\lambda \in \C$
with $\Re \lambda > \omega$,
\begin{equation}
  \label{e.ResolventEst}
  \lVert (\lambda -A)^{-1} \rVert 
  \leq \frac\Theta{\Re \lambda - \omega} \,.
\end{equation}
After reformulating \eqref{e.pde} in its \emph{mild formulation}
\begin{equation}
  \label{e.mild}
  U(t) = \e^{tA} U^0 + \int_0^t \e^{(t-s)A} \, B(U(s)) \, \d s \,,
\end{equation}
the contraction mapping theorem applies and we obtain local-in-time
well-posedness of our abstract semilinear evolution equation.

Let $\Phi^t$ denote the semiflow of \eqref{e.pde}, i.e.\ the map $U^0
\mapsto \Phi^t(U^0)$ such that $U(t) = \Phi^t(U^0)$ satisfies
\eqref{e.mild} with $U(0) = U^0$.  We sometimes write $\Phi(U^0,t)$ in
place of $\Phi^t$.  When $U^0 \in D(A)$, then $t \mapsto \Phi^t(U^0)$
is differentiable.

\subsection{Regularity of the semiflow}
\label{ss.diffSemiflow}

When $B=0$, then $t \mapsto \Phi^t(U)$ is $k$-times differentiable as
a map from $D(A^k)$ to $\kY$ for every $k \in \N$.  In this section,
we extend this result to semilinear evolution equations under suitable
assumptions on the nonlinearity $B$ and provide bounds on the
derivatives.

For $k \in \N_0$, we
define
\[
  \kY_k = D(A^k)
\]
endowed with the inner product
\begin{equation}
  \label{e.normYk}
  \langle U_1, U_2 \rangle_{\kY_k} 
  = \langle A U_1, A U_2 \rangle_{\kY_{k-1}} 
  + \langle  U_1, U_2 \rangle_{\kY_{k-1}} \,.
\end{equation}
Then  
\begin{equation}
  \label{e.normA}
  \norm{A}{\kY_{\ell+1} \to \kY_{\ell}} \leq 1
  \quad \text{and} \quad
  \norm{U}{\kY_{\ell}} \leq \norm{U}{\kY_{\ell+1}}
\end{equation}
for all $U \in \kY_{\ell+1}$.

Given $\delta>0$ and a hierarchy of open sets $\kD_\ell \subset
\kY_\ell$ for $\ell = 0, \ldots, L$ for $L \in \N$ with $\kD_0 \equiv
\kD$, we define $\kD_0^{-\delta} \equiv \kD^{-\delta}$ as in
\eqref{e.kD-delta} and, for $\ell=1, \dots, L$,
\[
  \kD^{-\delta}_\ell 
  \equiv \{ U \in \kD_\ell \colon 
           \dist_{\kY_\ell} (U, \partial \kD_\ell) > \delta \} \,.
\]
Then, by construction and due to \eqref{e.normA},
$\kB_{\delta}^{\kY_\ell}(U) \subset \kD_\ell$ for all $U \in
\kD^{-\delta}_\ell$ and $\ell=0, \ldots, L$.

Let $\kY_1$ be a Banach space continuously embedded into the Banach
space $\kY$.  Then $\kD_1 \subset \kY_1$ is called a $\delta_*$-nested
subset of $\kD \subset \kY$ if $\kD^{-\delta}_1 \subset \kD^{-\delta}
$ for all $\delta \in [0,\delta_*]$.  Furthermore we say that the
family $\kD_0, \dots, \kD_L$ is $\delta_*$-nested if
$\kD^{-\delta}_\ell \subset \kD^{-\delta}_{\ell-1}$ for all $\delta
\in [0,\delta_*]$ with $\delta_*>0$ and $\ell = 1,\ldots, L$.  For
example, the family $\kD_k = \kB_R^{\kY_k}(U^0)$ is $\delta_*$-nested
for every $\delta_* \in (0,R)$ and $U^0 \in \kY_L$.  However, an
arbitrary nested family $\kD_\ell \subset \kY_\ell$ may not be
$\delta_*$-nested for any $\delta_*>0$.

To state a differentiability result for higher time derivatives, we
need the following specific assumptions on the regularity of $B$ on
the scale $\kY_j$.  The same assumptions will also be required for the
convergence analysis of $A$-stable Runge--Kutta schemes in
Section~\ref{s.rk}.

\begin{itemize}
\item[(B1)] There exist $K \in \N_0$, $N \in \N$ with $N>K$, and a
sequence of $\delta_*$-nested $\kY_k$-bounded and open sets $\kD_k$
such that $B \in \kCb^{N-k}(\kD_k;\kY_k)$ for $k = 0, \dots, K$.
\end{itemize}

We denote the bounds on $B \colon \kD_k \to \kY_k$ and its derivatives
by constants $M_k$, $M_k'$, etc., for $k = 0, \dots, K$, and identify
$M=M_0$, $M'=M'_0$, and $\kD=\kD_0$.  In addition to the domains
$\kD_0, \dots, \kD_K$ defined in this assumption, we will sometimes
need to refer to $\kD_{K+1}$, which may be any $\delta_*$-nested
subset of $\kD_K$ which is bounded and open in $\kY_{K+1}$.

We note that nonlinear continuous operators $B\in \kC(\kD;\kY)$ do not
generally map closed bounded sets into closed bounded sets, see
Remark~\ref{r.unbounded} below.  However, the boundedness requirements
can always be met on balls inside the domain of $B$, i.e., if $B$ is
$\kC^n$ from some open set $\kD \subset \kZ$ to $\kZ$ then, by
continuity, for every $U^0 \in \kD$ there is some $R>0$ such that $B
\colon \kB_R(U^0) \subset \kD \to \kZ$ and its derivatives are
uniformly bounded so that $B \in \kCb^n(\kB_R(U^0);\kY)$.

\begin{remark} \label{r.unbounded} 
The existence of continuous unbounded nonlinear functionals on an
infinite-dimensional Banach space $\kX$ can be seen by the following
construction.  It is a standard result that there exists  a
sequence $x_j \in \kX$ such that $\lVert x_j \rVert = 1$ and $\lVert
x_j - x_k \rVert \geq 3/4$; on a Hilbert space, an orthonormal basis
will do.  Now let $h_j \in \kC(\kX,\R)$ have support on
$\kB^\kX_{1/4}(x_j)$ with $h_j(x_j)=1$.  Then $F$ defined by
\[
  F(x) = \sum_{j=0}^\infty j \, h_j(x)
\]
satisfies $F\in \kC(\kX,\R)$, since we have $h_j(x)=0$ for all but at
most one $j$.  But $F$ does not map the closed bounded set
$\kB_1^\kX(0)$ into a bounded set.
\end{remark}

Superposition operators of smooth functions $f \colon D\subset\R^d\to
\R^m$ on Sobolev spaces as occur in Examples~\ref{ex:SWE}
and~\ref{ex:NSE} above are bounded.
Indeed, for superposition operators we can construct 
$\delta_*$-nested domains such that condition (B1) holds, see Theorem~\ref{t.superposition}
and Sections~\ref{ss.swe} and~\ref{ss.nse} below.

Under assumptions (A0) and (B1), the semiflow $\Phi^t$ of
\eqref{e.pde} exists on each $\kY_k$.  In the following, we show that
a time derivative of order $\ell$ maps $\ell$ rungs down this scale of
Hilbert spaces.
 
\begin{theorem}[Regularity of the semiflow, local version] 
\label{t.local-diff} 
Assume \textup{(A0)} and \textup{(B1)}.  Choose $R \in (0, \delta_*]$
such that $\kD_K^{-R}\neq \emptyset$
 and pick $U^0 \in  \kD_K^{-R}$.  Let $R_* =
R/(2\Theta)$ with $\Theta$ from \eqref{e.etAEstimate}.  Then there is
$T_*=T_*(R,U^0)>0$ such that the semiflow $(U,t) \mapsto \Phi^t(U)$ of
\eqref{e.pde} satisfies
\begin{subequations}
\label{e.T-diff}
\begin{equation}
  \label{e.T-diff-gen}
  \Phi \in \bigcap_{\substack{j+k \leq N \\ \ell \leq k \leq K}}
  \kCb^{(\underline j, \ell)} (B_{R_*}^{\kY_K}(U^0) \times [0,T_*]; 
    \kB_R^{\kY_{k-\ell}}(U^0)) \,.
\end{equation}
In particular, 
\begin{equation}
  \label{e.T-diff-easy}
  \Phi \in \kCb^K (B_{R_*}^{\kY_K}(U^0) \times [0,T_*]; 
    \kB_R^\kY(U^0)) \,.
\end{equation}
\end{subequations}
The bounds on $\Phi$ and $T_*$ depend only on the bounds afforded by
\textup{(B1)}, \eqref{e.etAEstimate}, $R$, and $U^0$.
\end{theorem}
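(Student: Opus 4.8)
The plan is to recast the mild formulation \eqref{e.mild} as a parametrized fixed-point problem on a scale of spaces of continuous curves and to invoke the abstract contraction mapping theorem on a scale of Banach spaces from the appendix. To turn the terminal time into a genuine parameter, rescale time by $V(\tau) \equiv \Phi^{\tau t}(U^0)$, $\tau \in [0,1]$; substituting $s = \sigma t$ in \eqref{e.mild} shows that $V$ is a fixed point of the map $\Psi(\,\cdot\,;U^0,t)$ given by
\begin{equation*}
  \Psi(V;U^0,t)(\tau)
  = \e^{\tau t A} \, U^0
    + t \int_0^\tau \e^{(\tau-\sigma)tA} \, B(V(\sigma)) \, \d\sigma \,,
\end{equation*}
and that $\Phi^t(U^0) = V(1;U^0,t)$. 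By (A0), \eqref{e.normYk}, and the fact that $A$ commutes with $\e^{sA}$, the semigroup $\e^{sA}$ restricts to a $C_0$-semigroup on each $\kY_k$ obeying \eqref{e.etAEstimate} with the \emph{same} constants $\Theta,\omega$; hence $\Psi$ is well defined on each level $C([0,1];\kY_k)$ for $k = 0,\dots,K$.

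\emph{Step 1: uniform contraction and a priori bounds.} Set $R_*=R/(2\Theta)$. For $\tilde U^0 \in \kB_{R_*}^{\kY_K}(U^0)$, $0 \le t \le T_*$, $\tau\in[0,1]$, and $0\le k\le K$, one bounds, using \eqref{e.etAEstimate}, \eqref{e.normA} and $\norm{B}{\kD_k\to\kY_k}\le M_k$,
\begin{equation*}
  \norm{\Psi(V;\tilde U^0,t)(\tau)-U^0}{\kY_k}
  \le \Theta \, \e^{\omega T_*} \, R_*
    + \norm{(\e^{\tau t A}-\id) \, U^0}{\kY_k}
    + T_* \, \Theta \, \e^{\omega T_*} \, M_k \,.
\end{equation*}
By strong continuity of $s\mapsto\e^{sA}$ on $\kY_k$ the middle term tends to $0$ as $T_*\to0$ uniformly in $\tau$, so for $T_*=T_*(R,U^0)$ small enough the right-hand side is at most $R$; shrinking $T_*$ further makes $\Psi(\,\cdot\,;\tilde U^0,t)$ a contraction on the set of curves in $C([0,1];\kY_K)$ with values in $\kB_R^{\kY_K}(U^0)$, and by uniqueness the resulting fixed point is simultaneously the fixed point on every level $k \le K$. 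This yields $\Phi^t$ on each $\kY_k$ together with $\Phi^t(\tilde U^0)\in\kB_R^{\kY_k}(U^0)$ for $0\le t\le T_*$ and $0\le k\le K$. The one subtlety is that $(U^0,t)\mapsto\e^{\tau t A}U^0$ and $\sigma\mapsto\e^{(\tau-\sigma)tA}B(V(\sigma))$ are only strongly, not norm, continuous in the time-like variables; this is exactly what the abstract theorem is built to handle, and it is why the conclusion must be phrased with the directional-derivative classes $\kCb^{(\underline j,\ell)}$ rather than the Fr\'echet classes.

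\emph{Step 2: differentiability and the index count.} The parameter assignment $(U^0,t)\mapsto\Psi(\,\cdot\,;U^0,t)$ inherits the regularity of $B$ and of $(U^0,t)\mapsto\e^{\tau tA}U^0$: at output level $m$ the nonlinearity supplies $B\in\kCb^{N-m}(\kD_m;\kY_m)$, i.e.\ a budget of $N-m$ derivatives; a $U^0$-derivative of the fixed point consumes one of these via the chain rule, and a $t$-derivative likewise consumes one while additionally producing a factor $A$ — through $\partial_t\e^{\tau tA}=\tau A\e^{\tau tA}$ and differentiation under the Duhamel integral — hence costing one rung of the scale. Feeding this into the abstract contraction mapping theorem on the scale $\{C([0,1];\kY_k)\}_{k=0}^{K}$, and using $\kY_K\hookrightarrow\kY_k$ with norm $\le1$ by \eqref{e.normA} to restrict the initial datum to a $\kY_K$-ball, gives, for each admissible triple,
\begin{equation*}
  (U^0,t)\ \longmapsto\ V(\,\cdot\,;U^0,t)\ \in\
  \kCb^{(\underline j,\ell)}\bigl(\kB_{R_*}^{\kY_K}(U^0)\times[0,T_*];\
    C([0,1];\kY_{k-\ell})\bigr) \,,
\end{equation*}
where ``admissible'' means $\ell\le k\le K$ together with the budget condition $j+\ell\le N-(k-\ell)$, i.e.\ $j+k\le N$. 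Composing with the bounded linear evaluation $V\mapsto V(1;\,\cdot\,,\,\cdot\,)$ and intersecting over all admissible triples yields \eqref{e.T-diff-gen}. For \eqref{e.T-diff-easy} one collects the mixed derivatives $\D_{U^0}^j\partial_t^\ell\Phi$ of total order $j+\ell\le K$ and takes $k=\ell$: then the output level $k-\ell=0$, and $\ell\le k=\ell\le K$ and $j+k=j+\ell\le K<N$ hold. Finally, every constant entering the estimates above — hence $T_*$ and the bounds on $\Phi$ — is assembled only from $\Theta$, $\omega$, the constants $M_k$ and their derivative analogues afforded by (B1), $R$, and $U^0$.

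\emph{Main obstacle.} The substance of the proof is Step 2: verifying the precise hypotheses of the abstract scale-contraction theorem for $\Psi$ — in particular pinning down the exact joint regularity of $(U^0,t)\mapsto\Psi(\,\cdot\,;U^0,t)$ on the scale while the semigroup terms are merely strongly continuous in $t$, and carrying out the bookkeeping that produces precisely the admissible set $\{\,j+k\le N,\ \ell\le k\le K\,\}$ while keeping every derivative of the fixed point valued in the prescribed ball $\kB_R^{\kY_{k-\ell}}(U^0)$ rather than in some unspecified ball.
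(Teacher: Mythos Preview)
Your proposal is correct and follows essentially the same route as the paper: rescale time to put the terminal time into a parameter, set up the Duhamel map $\Pi$ as a parametrized contraction on the scale $\kZ_j=\kCb([0,1];\kY_j)$, verify the self-mapping and contraction estimates exactly as in your Step~1, apply Theorem~\ref{t.cm-scale}, and evaluate the fixed point at $\tau=1$. Two small points where the paper is more explicit: condition~(ii) of Theorem~\ref{t.cm-scale} requires the contraction estimate on \emph{each} level $\kW_j$, $j=0,\dots,K$ (not only on $\kY_K$), and \eqref{e.T-diff-easy} is deduced from \eqref{e.T-diff-gen} via Lemma~\ref{l.cm-scale}, which supplies the upgrade from strong to norm continuity that your closing argument leaves implicit.
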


\begin{proof}
Writing $t=\tau T$ for some fixed $T>0$, we see that a solution to the
mild formulation \eqref{e.mild} is a fixed point of the map
\begin{equation}
  \label{e.PiFlow}
  \Pi(W;U,T)(\tau) = \e^{\tau T A} U +
  T \int_0^\tau \e^{(\tau-\sigma)TA} \, B(W(\sigma)) \, \d\sigma \,.
\end{equation}
This reformulation is useful because we want to quote the contraction
mapping theorem on a scale of Banach spaces, Theorem~\ref{t.cm-scale},
to prove the differentiability properties of $\Phi$ as claimed.  We
work on the scale $\kZ_j = \kCb([0,1]; \kY_j)$ and seek a fixed point
of $\Pi$ in $\kW_j = \kCb ([0,1]; \kB_R^{\kY_j}(U^0))$ for $j = 0,
\dots, K$, with parameter sets $\kU \equiv
\interior(\kB^{\kY_K}_{R_*}(U^0)) \subset \kX = \kY_K$ and $\kI =
(0,T_*)$.  Clearly, $\Pi$ maps $\kW_j\times \kU \times \kI$ into
$\kZ_j$.  To bound the range of $\Pi$, we estimate, for $j=0,\ldots,
K$,
\begin{align}
  & \norm{\Pi(W;U,T) - U^0}{\kY_j}
    \notag \\
  & \quad
    \leq \norm{\e^{\tau T A} U^0 - U^0}{\kY_j} + 
         \norm{\e^{\tau T A} (U - U^0)}{\kY_j}  +
         T \int_0^\tau 
         \norm{\e^{(\tau-\sigma)TA} \, B(W(\sigma ))}{\kY_j} \, \d\sigma
    \notag \\
  & \quad
    \leq \norm{\e^{\tau T A} U^0 - U^0}{\kY_j} + 
         \Theta \, \e^{\omega T} \, R_* +
         T \, \Theta \, \e^{\omega T} \, M_j \,. 
       \label{e.localEx}
\end{align}
With the choice $R_*={R}/{2\Theta}$, we observe that for sufficiently
small $T_*$ and all $T \in [0,T_*]$ the right hand side can be made
less than $R$ for $j=0,\ldots, K$ independent of $\tau \in [0,1]$.  We
can thus take the supremum over $\tau \in [0,1]$, which altogether
proves that $\Pi$ maps $\kW_j\times \kU \times \kI$ into $\kW_j$.
Condition (i) of Theorem~\ref{t.cm-scale} then follows from our
assumptions on $A$ and $B$.

Similarly, we estimate
\begin{equation}
  \label{e.flow_contraction}
  \norm{\D_W \Pi(W;U,T)}{\kE(\kCb([0,1];\kY_j))}
  \leq T \, \Theta \, \e^{\omega T} \, M'_j \,,
\end{equation}
so that $\Pi$ is a uniform contraction for all $U \in \kU$, $W \in
\kW$, and $T \in \kI = (0,T_*)$ with a possibly smaller value of
$T_*$.  Here we used that $B$ is at least $\kC^1$ on the highest rung
of the scale due to the requirement that $N>K$ in \textup{(B1)}.
Hence, condition (ii) of Theorem~\ref{t.cm-scale} is verified.
 
Theorem~\ref{t.cm-scale} then implies that the fixed point $W$ of
$\Pi$ satisfies
\[
  W \in \bigcap_{\substack{j+k \leq N \\ \ell \leq k \leq K}}
  \kCb^{(\underline j, \ell)} (B_{R_*}^{\kY_K}(U^0) \times [0,T_*]; 
  \kW_{k-\ell}) \,.
\]
To infer \eqref{e.T-diff-gen}, we recall that $\Phi^{\tau T}(U) =
W(U,T)(\tau)$; hence $\partial_U^m\partial_T^n \Phi^{T}(U) =
\partial_U^m\partial_T^n W(U,T)(1)$.  Finally, \eqref{e.T-diff-easy}
follows from Lemma~\ref{l.cm-scale}.
\end{proof}
  
\begin{remark} 
With the choice of norm \eqref{e.normYk}, the fundamental estimates in
this paper which carry named constants, in particular $\Theta$ in
\eqref{e.exptA-unif} and $\Lambda$, $c_\mS$ in
Lemma~\ref{l.RK.Lambda}, are the same on all $\kY_k$ for $k \in \N_0$
as these constants are norms of operators like $\e^{tA}$
 which   commute with $A$.  Thus, if $\kY_k$ for $k \in \N$
were endowed with a different, but equivalent set of norms, these and
consequent constants would need to be adopted and possibly become
dependent on the   rung.
\end{remark}

Theorem~\ref{t.local-diff} does not guarantee that the time of
existence of the solution can be chosen uniformly over $\kD$ or even
over $\kD^{-\delta}$ for some $\delta>0$.  The following theorem shows
that such uniformity can be obtained along with improved regularity
over bounded domains other than balls on the expense of requiring the
initial data to lie in a set one step up the scale.

In the following, define
\begin{equation}
  \label{e.Rk}
  R_{K+1} = \sup_{U \in \kD_{K+1}^{-\delta}} \norm{U}{\kY_{K+1}} \,.
\end{equation}

\begin{theorem}[Regularity of the semiflow, uniform version] 
\label{t.local-diff-unif} 
Assume \textup{(A0)} and \textup{(B1)}.  Choose $\delta \in (0,
\delta_*]$ small enough such that $\kD_{K+1}^{-\delta} \neq
\emptyset$.  Then there exists $T_*=T_*(\delta)>0$ such that the
semiflow $(U,t) \mapsto \Phi^t(U)$ of \eqref{e.pde} satisfies
\eqref{e.T-diff} with uniform bounds for all $U^0 \in
\kD_{K+1}^{-\delta}$, with $R=\delta$, and such that
\begin{subequations}
\begin{equation}
  \label{e.T-diff-gen-unif}
  \Phi \in \bigcap_{\substack{j+k \leq \mybf{N} \\ \ell \leq k \leq \mybf{K+1} }}
  \kCb^{(\underline j, \ell)} (\kD_{K+1}^{-\delta} \times [0,T_*]; 
  \kY_{\mybf{k-\ell}}) \,.
\end{equation}
In particular, when $N>K+1$,
\begin{equation}
  \label{e.T-diff-easy-unif}
  \Phi \in \kCb^{\mybf{K+1}}(\kD_{K+1}^{-\delta} \times [0,T_*]; \mybf{ \kD}) \,.
\end{equation}
\end{subequations}
The bounds on $\Phi$ and $T_*$ depend only on $\delta$ and on the
bounds afforded by \textup{(B1)}, \eqref{e.Rk}, and
\eqref{e.etAEstimate}.
\end{theorem}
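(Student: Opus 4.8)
The plan is to mimic the proof of Theorem~\ref{t.local-diff}, again realizing the semiflow as a fixed point of the map $\Pi$ from \eqref{e.PiFlow} via the scale-of-Banach-spaces contraction mapping theorem, Theorem~\ref{t.cm-scale}, but now set up on a scale that runs one rung higher, namely $\kZ_j = \kCb([0,1];\kY_j)$ for $j=0,\dots,K+1$, with parameter set $\kU = \interior(\kD_{K+1}^{-\delta}) \subset \kX = \kY_{K+1}$ and $\kI = (0,T_*)$. The essential new point, compared with Theorem~\ref{t.local-diff}, is that we must keep the solution inside the \emph{fixed} domains $\kD_k$ rather than inside balls whose radius we are free to shrink; this is what forces the initial data up into $\kY_{K+1}$ and what yields uniformity in $U^0$ over all of $\kD_{K+1}^{-\delta}$.

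First I would fix the target sets. For $j=0,\dots,K$ let $\kW_j = \kCb([0,1];\kD_j^{-\delta/2})$ or, more conveniently, $\kW_j = \kCb([0,1]; \{U : \dist_{\kY_j}(U,U^0) \le \delta\} \cap \kD_j)$; the point is that by the paragraph preceding the theorem, $\kB_\delta^{\kY_j}(U^0) \subset \kD_j$ whenever $U^0 \in \kD_j^{-\delta} \supset \kD_{K+1}^{-\delta}$, so a ball of radius $R=\delta$ about $U^0$ stays inside $\kD_j$ for every $j \le K$. For $j=K+1$ one uses $R_{K+1}$ from \eqref{e.Rk} to control $\norm{U}{\kY_{K+1}}$. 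Then I would redo the range estimate \eqref{e.localEx}: since $\norm{U-U^0}{\kY_{K+1}}$ is no longer bounded by a free small parameter $R_*$ but only by $2R_{K+1}$ (diameter of $\kD_{K+1}^{-\delta}$ in $\kY_{K+1}$), the term $\Theta\e^{\omega T}\norm{U-U^0}{\kY_j}$ is handled instead by using strong continuity of $\e^{\tau T A}$: for $j \le K$ we bound $\norm{\e^{\tau TA}(U-U^0)}{\kY_j}$ and $\norm{(\e^{\tau TA}-\id)U^0}{\kY_j}$ together, noting $U^0 \in \kY_{K+1} = D(A^{K+1})$ gives $\norm{(\e^{\tau TA}-\id)U^0}{\kY_K} \le \tau T \norm{A U^0}{\kY_K} \le \tau T R_{K+1}$ by \eqref{e.normA}, which is $O(T)$ uniformly. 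Thus for $T_*$ small enough, depending only on $\delta$, $R_{K+1}$, the $M_j$, and $\Theta,\omega$, the right-hand side of the analogue of \eqref{e.localEx} is $\le R = \delta$ for $j=0,\dots,K$, and the $\kY_{K+1}$-estimate is likewise closed using $M_{K+1}$ — wait, we only have $B \in \kCb^{N-k}(\kD_k;\kY_k)$ for $k \le K$, so at rung $K+1$ we cannot invoke a bound on $B$ on $\kY_{K+1}$. Hence on the top rung we do \emph{not} ask $\Pi$ to preserve a ball; instead we only use $\kY_{K+1}$ as the \emph{parameter} space $\kX$ for the initial datum and keep the fixed-point iteration on rungs $0,\dots,K$, exactly as in Theorem~\ref{t.local-diff} but with $\kX = \kY_{K+1}$ rather than $\kY_K$. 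This is the correct reading of the statement: \eqref{e.T-diff-gen-unif} ranges over $k \le K+1$, and the extra rung of regularity in the \emph{initial datum} buys one extra order of differentiability in $(U,t)$ for the flow landing in $\kY_{k-\ell}$.

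Concretely, then: take the scale $\kZ_j = \kCb([0,1];\kY_j)$, $j=0,\dots,K$, target sets $\kW_j = \kCb([0,1];\kB_\delta^{\kY_j}(U^0)\cap\kD_j)$, parameter space $\kX=\kY_{K+1}$, $\kU=\interior(\kD_{K+1}^{-\delta})$, and $\kI=(0,T_*)$. The range bound is the modified \eqref{e.localEx} above, where the troublesome term is now $O(T)$ uniformly over $\kU$ because $U^0$ lives one rung up; this is where the uniformity over the whole open set $\kD_{K+1}^{-\delta}$ — as opposed to small balls — comes from, and it is the step I expect to require the most care, since one must verify that the $T_*$ produced depends only on $\delta$ and the listed constants and not on the particular $U^0$. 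The contraction estimate \eqref{e.flow_contraction} is unchanged, giving $\norm{\D_W\Pi}{} \le T\Theta\e^{\omega T}M'_j$, so condition (ii) of Theorem~\ref{t.cm-scale} holds for $T_*$ possibly smaller; condition (i) follows from (A0) and (B1) as before, the only subtlety being that we now need $B$ and its first $N-k$ derivatives to be continuous as maps into $\kY_k$ for $k \le K$, together with the fact that $\e^{tA}$ is strongly but not norm continuous, which is precisely what Theorem~\ref{t.cm-scale} is built to accommodate. Applying Theorem~\ref{t.cm-scale} with the parameter space $\kY_{K+1}$ yields
\[
  W \in \bigcap_{\substack{j+k \le N \\ \ell \le k \le K+1}}
  \kCb^{(\underline j,\ell)}(\kD_{K+1}^{-\delta}\times[0,T_*]; \kW_{k-\ell}) \,,
\]
and since $\Phi^{\tau T}(U)=W(U,T)(\tau)$ we read off $\partial_U^m\partial_T^n\Phi^T(U)=\partial_U^m\partial_T^n W(U,T)(1)$, which is \eqref{e.T-diff-gen-unif}; here $\kW_{k-\ell} \subset \kCb([0,1];\kD_{k-\ell})$, and in particular for $k-\ell=0$ the target is inside $\kD=\kD_0$, giving \eqref{e.T-diff-easy-unif} via Lemma~\ref{l.cm-scale} when $N>K+1$. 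The claim that $\Phi$ also satisfies \eqref{e.T-diff} with $R=\delta$ and uniform bounds is then immediate since $\kD_{K+1}^{-\delta} \subset \kD_K^{-\delta/1}$ (indeed $\subset \kD_K^{-\delta}$) and the balls $\kB_\delta^{\kY_K}(U^0)$ sit inside $\kD_K$, so the conclusion of Theorem~\ref{t.local-diff} holds with the same $T_*$ uniformly in $U^0 \in \kD_{K+1}^{-\delta}$. Finally, tracing the constants through \eqref{e.localEx}, \eqref{e.flow_contraction}, and Theorem~\ref{t.cm-scale} shows $T_*$ and all the bounds depend only on $\delta$, on the $M_k,M_k'$ from (B1), on $R_{K+1}$ from \eqref{e.Rk}, and on $\Theta,\omega$ from \eqref{e.etAEstimate}, as asserted.
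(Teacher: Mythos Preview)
Your handling of the uniformity is correct and matches the paper: the key estimate
\[
  \norm{(\e^{\tau TA}-\id)U^0}{\kY_j} \leq T\,\Theta\,\e^{\omega T}\,R_{K+1}
\]
for $U^0 \in \kD_{K+1}^{-\delta}$ and $j=0,\dots,K$ is precisely what makes $T_*$ independent of $U^0$, and this is how the paper obtains the uniform version of \eqref{e.T-diff}.

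However, there is a genuine gap in your passage to \eqref{e.T-diff-gen-unif}. You assert that applying Theorem~\ref{t.cm-scale} with parameter space $\kX=\kY_{K+1}$ yields the fixed point in
\[
  \bigcap_{\substack{j+k\le N\\ \ell\le k\le K+1}}
  \kCb^{(\underline j,\ell)}(\kD_{K+1}^{-\delta}\times[0,T_*];\kW_{k-\ell})\,,
\]
but this is not what Theorem~\ref{t.cm-scale} says. The upper limit $K$ in the conclusion of that theorem is the length of the scale $\kZ_0\supset\dots\supset\kZ_K$ on which the contraction is run; it has nothing to do with the choice of parameter space $\kX$. Since you (correctly) keep the iteration on rungs $0,\dots,K$ because $B$ is not assumed to act on $\kY_{K+1}$, the theorem only delivers $\ell\le k\le K$, i.e.\ \eqref{e.T-diff} with uniform bounds, \emph{not} \eqref{e.T-diff-gen-unif}.

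The missing piece is a separate argument that $\Phi$ actually gains one rung of spatial regularity, and then a conversion of this into one more time derivative. The paper does this as follows: apply $A$ to the fixed-point equation \eqref{e.PiFlow} and integrate by parts to obtain a \emph{linear} fixed-point equation for $\tilde W = AW$,
\[
  \tilde W(\tau)=\e^{\tau TA}(AU+B(U))-B(W(\tau))
  +T\int_0^\tau \e^{(\tau-\sigma)TA}\,\D B(W(\sigma))\bigl(\tilde W(\sigma)+B(W(\sigma))\bigr)\,\d\sigma\,,
\]
apply Theorem~\ref{t.cm-scale} once more to this equation (with $N$ replaced by $N-1$) to conclude that $A\Phi$ lies in the class $\kC_{N-1,K}$, then invoke $\partial_t\Phi=A\Phi+B\circ\Phi$ together with Lemma~\ref{l.cm-composition} to see that $\partial_t\Phi$ lies in the same class, and finally assemble the pieces via Lemma~\ref{l.technical1} to reach $k\le K+1$. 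None of this follows from merely upgrading the parameter space; it requires the integration-by-parts trick so that $B$ is never asked to map into $\kY_{K+1}$.
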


\begin{proof}
We apply Theorem~\ref{t.local-diff} for each $U^0 \in
\kD_{K+1}^{-\delta}$ with $R=\delta$.  We note that in the proof of
Theorem~\ref{t.local-diff}, even in the case $K=0$, the guaranteed
time of existence $T_*$ cannot be chosen uniformly for $U^0 \in
\kD^{-\delta}$ because the first term on the right of
\eqref{e.localEx} cannot be made uniformly small.  However, we may
alternatively estimate, using \eqref{e.normA} and \eqref{e.Rk}, that
for $j=0,\ldots, K$
\begin{equation}
  \label{e.exptA-unif}
  \norm{\e^{\tau T A} U^0 - U^0}{\kY_j}
  \leq T \, \max_{t \in [0,T]} \norm{A\e^{t A} U^0 }{\kY_j}
  \leq T \,  \, \Theta \, \e^{\omega T} \, R_{1+j}  \,.
\end{equation}
Inserting this estimate into \eqref{e.localEx}, we see that we can
choose $T_*>0$ small enough such that $\Pi(\,\cdot\,; U, T)$ maps
$\kW_j=\kB_R^{\kZ_j}(U^0)$ into itself for all $U^0 \in
\kD_{K+1}^{-\delta}$ and $T \in [0,T_*]$.  Following the proof of
Theorem~\ref{t.local-diff}, we find that \eqref{e.T-diff-gen} holds
with uniform bounds for all $U^0 \in \kD_{K+1}^{-\delta}$ when
$R=\delta$, thereby implying
\begin{equation}
  \label{e.T-diff-gen-unif-1}
  \Phi \in \bigcap_{\substack{j+k \leq N \\ \ell \leq k \leq K}}
  \kCb^{(\underline j, \ell)} (\kD_{K+1}^{-\delta} \times [0,T_*]; 
  \kD_{k-\ell}) 
\end{equation}
with bounds which only depend on the bounds afforded by \textup{(B1)},
\eqref{e.etAEstimate}, \eqref{e.Rk}, and on $\delta$.  Next, we prove
that $\Phi$ maps into a space one step up the scale, namely
\begin{equation}
  \label{e.T-diff-gen-unif-Y1}
  A\Phi \in \bigcap_{\substack{j+k \leq N-1 \\ \ell \leq k \leq K}}
  \kCb^{(\underline j, \ell)} (\kD_{K+1}^{-\delta} \times [0,T_*]; 
  \kY_{k-\ell}) \,.
\end{equation}
 
Note that by \cite[Theorem 6.1.5]{Pazy}, a mild solution $U(t)$ of
\eqref{e.pde} satisfies $U(t) \in D(A)$ if $U(0) \in D(A)$ and $B \in
\kC^1(\kD,\kY)$; thus, the formal identity $\d W(\tau)/\d \tau = T \,
(A W(\tau T) + B(W(\tau)))$ for $W(\tau) = U(\tau T)$ holds true.
Hence, by applying $A$ to the fixed point equation \eqref{e.PiFlow}
and integrating by parts, we find
\begin{align*}
  A W(\tau) 
  & = \e^{\tau T A} A U
      + T \int_0^{\tau} A \e^{(\tau-\sigma)TA} \,
        B(W(\sigma )) \, \d \sigma 
      \notag \\
  & = \e^{\tau T A} \, (A U + B(U)) - B(W(\tau)) 
      \notag \\
  & \quad 
      + T \int_0^{\tau} \e^{(\tau-\sigma)TA} \, 
        \D B(W(\sigma)) (A W(\sigma) + B(W(\sigma))) \, \d\sigma \,. 
\end{align*}
This is a linear fixed point equation 
\begin{align}
  \tilde{W} = 
  \tilde\Pi(\tilde{W},U,T)(\tau) 
  & = \e^{\tau T A} \, (A U + B(U)) - B(W(\tau)) 
      \notag \\
  & \quad 
      + T \int_0^{\tau} \e^{(\tau-\sigma)TA} \, 
        \D B(W(\sigma)) (\tilde{W}(\sigma) + B(W(\sigma))) \, \d\sigma \,
  \label{e.AW_flow}
\end{align}
for $\tilde{W}(U,T) = AW(U,T)$.  We consider the fixed point equation
\eqref{e.AW_flow} for $\tilde W=AW$ with $\kW_j =\kB_{\kZ_j}^r(0)$ for
$j=0,\ldots, K$ with $r>0$ big enough such that $\tilde\Pi$ maps each
$\kW_j$ into itself.  Applying Lemma~\ref{l.cm-composition} (chain
rule on the scale of Banach spaces) and Lemma~\ref{l.DwPiClass} to the
right hand side of the fixed point equation \eqref{e.AW_flow}, we
verify once more the assumptions of Theorem~\ref{t.cm-scale} with $N$
replaced by $N-1$.  This yields \eqref{e.T-diff-gen-unif-Y1}.
 
It remains to be shown that we can translate improved spatial
regularity into differentiability in time by invoking the semilinear
evolution equation \eqref{e.pde}.  Due to \eqref{e.T-diff-gen-unif-1},
Lemma~\ref{l.cm-composition} implies that $B \circ \Phi$ is in the
same class \eqref{e.T-diff-gen-unif-Y1} as $A \Phi$ and, since
$\partial_t \Phi = A\Phi + B\circ \Phi$, so is $\partial_t \Phi$.
Combining this result, \eqref{e.T-diff-gen-unif-1}, and
\eqref{e.T-diff-gen-unif-Y1} via Lemma~\ref{l.technical1} implies
\eqref{e.T-diff-gen-unif}.

Finally, \eqref{e.T-diff-easy-unif} follows from
Lemma~\ref{l.cm-scale} with $K$ replaced by $K+1$.
\end{proof}
 
\begin{remark}
It is worth noting that, even though we find that $\Phi^t$ maps into
$\kY_{K+1}$, the proof, being based on the fixed point problem
\eqref{e.AW_flow}, requires $B$ to be defined only up to rung $K$.
The same pattern occurs when studying the Runge--Kutta numerical
time-$h$ maps in Section~\ref{s.rk}.
\end{remark}

\begin{remark}[Image of semiflow] \label{r.local-diff-unif}   
The proof of Theorem \ref{t.local-diff-unif} shows that, actually,
\[
  \Phi \in \bigcap_{\mybf{\substack{j+k \leq N \\ \ell \leq k \leq K+1 \\ 
                              (k,\ell) \neq (K+1,0)}}}
  \kCb^{(\underline j, \ell)} (\kD_{K+1}^{-\delta} \times [0,T_*]; 
   \mybf{ \kD_{k-\ell}}) \,.  
\]
\end{remark}

\begin{remark} \label{rem:HamPazy} 
If $A= -A^*$ is skew-symmetric on the Hilbert space $\kY$, as for the
nonlinear Schr\"odinger equation (see Section~\ref{ss.nse}), then $A$
is normal, $\i A$ is self-adjoint and, by Stone's theorem, generates a
unitary group $\e^{tA}$.  More generally, if $A$ is skew-symmetric up
to a perturbation which is bounded on all $\kY_k$, as for the
semilinear wave equation (see Section~\ref{ss.swe}), then $A$
generates a $C_0$ group $\e^{tA}$ on each $\kY_k$.  In both cases,
\eqref{e.etAEstimate} and \eqref{e.ResolventEst} may be replaced by
the following statement: There exist a constant $\omega$ with $\lvert
\Re (\spec A) \rvert \leq \omega$ and a constant $\Theta$ such that
for every $t \in \R$ and for every $\lambda \in \C$ with $\lvert \Re
\lambda \rvert > \omega$
\[
  \lVert \e^{tA} \rVert \leq \Theta \, \e^{\omega \lvert t \rvert},\quad
  \lVert (\lambda -A)^{-1} \rVert 
  \leq \frac\Theta{\lvert \Re \lambda \rvert - \omega} \,,
\]
see \cite{Pazy}.  Then the semiflow $\Phi^t$ is also a flow with
interval of existence $[-T_*,T_*]$ and regularity as specified in
Theorem~\ref{t.local-diff} and Theorem \ref{t.local-diff-unif}.
\end{remark}

%%%%%%%%%%%%%%%%%%%%%%%%%%%%%%%%%%%%%%%%%%%%%%%%%%%%%%%%%%%%%%%%%%%%%

\subsection{Superposition operators}
\label{ss.superpos}

To study the well-posedness of evolution equations such as
\eqref{e.swe} and \eqref{e.nse}, we need to consider superposition
operators $f \colon \mybf{\kG}_\ell \subset \kH_\ell \to \kH_\ell$ of
functions $f \colon \mybf{G} \subset \R^d \to \R^d$.  \mybf{This concept is
widely used; see, e.g., \cite{Henry,Pazy} for specific examples.  In
this section, we characterize superposition operators in sufficient
generality for later use.}
 
Let $I=[a,b] \subset \R$ be a bounded closed interval.  We write
$\kH_\ell(I; \R^d)$ to denote the Sobolev space of functions $u \colon
I \to \R^d$ whose weak derivatives up to order $\ell$ are contained in
$\kL_2(I;\R^d)$.
 
\begin{lemma}[{\cite{Adams}}] \label{l.algebra}
The space $\kH_{\ell}(\mybf{I};\R)$ is a topological algebra for every
 $\ell > 1/2$. Specifically, there exists a constant
$c=c(\ell)$ such that for every $u, v \in \kH_{\ell}(\mybf{I};\R)$ the
product $uv \in \kH_{\ell}(\mybf{I};\R)$ satisfies
\begin{equation}
  \label{eq:uvGm}
  \norm{uv}{\kH_{\ell}(\mybf{I},\R)}
  \leq c \, \norm{u}{\kH_{ \ell}(\mybf{I};\R)} \, 
            \norm{v}{\kH_{ \ell}(\mybf{I};\R)} \,.
\end{equation}
\end{lemma}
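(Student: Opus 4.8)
The plan is to prove that $\kH_\ell(I;\R)$ is a Banach algebra for $\ell > 1/2$, i.e.\ that pointwise multiplication is a bounded bilinear map $\kH_\ell \times \kH_\ell \to \kH_\ell$ with the stated constant. The natural route is to exploit the Sobolev embedding $\kH_\ell(I;\R) \hookrightarrow \kCb(I;\R) = \kL_\infty(I;\R)$, which holds on a bounded interval precisely when $\ell > 1/2$; call its norm $c_\infty = c_\infty(\ell)$. With this in hand, the low-order derivatives of the product are trivially controlled, and the only real work is the top-order derivative term.

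First I would split into two cases according to whether $\ell$ is an integer. If $\ell = m \in \N$, I would use the Leibniz rule $\partial^m(uv) = \sum_{j=0}^m \binom{m}{j} \, \partial^j u \, \partial^{m-j} v$ and estimate the $\kL_2$-norm of each summand. For the extreme terms $j = 0$ and $j = m$, I put the factor without derivatives in $\kL_\infty$ via the embedding and the other factor in $\kL_2$, giving a bound by $c_\infty \norm{u}{\kH_\ell} \norm{v}{\kH_\ell}$. For the intermediate terms $1 \le j \le m-1$, I would invoke the Gagliardo--Nirenberg interpolation inequality $\norm{\partial^j u}{\kL_{2m/j}} \le C \norm{u}{\kH_m}^{j/m} \norm{u}{\kL_\infty}^{1-j/m}$ (again valid on a bounded interval) and apply H\"older with exponents $2m/j$ and $2m/(m-j)$ to $\partial^j u \cdot \partial^{m-j} v$; combining and absorbing the embedding constant yields the estimate for these terms as well. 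Summing the finitely many contributions and combining with the obvious bound on the lower-order derivatives $\partial^k(uv)$ for $k < m$ gives \eqref{eq:uvGm} with a constant $c = c(\ell)$ depending only on $\ell$ (through $m$, the binomial coefficients, $c_\infty$, and the interpolation constants).

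For non-integer $\ell$, I would instead argue via the Fourier/Bessel-potential characterization: extend $u, v$ to $\kH_\ell(\R;\R)$ using a bounded extension operator $E \colon \kH_\ell(I) \to \kH_\ell(\R)$ (whose norm depends only on $\ell$ and $I$), and on the line use the standard fact that $\langle \xi \rangle^\ell \widehat{fg}(\xi) \in \kL_2$ is controlled by $\norm{f}{\kH_\ell(\R)} \norm{g}{\kL_\infty(\R)} + \norm{f}{\kL_\infty(\R)} \norm{g}{\kH_\ell(\R)}$, which follows from the elementary inequality $\langle \xi \rangle^\ell \le C_\ell (\langle \eta \rangle^\ell + \langle \xi - \eta \rangle^\ell)$ together with the convolution structure $\widehat{fg} = \widehat f * \widehat g$ and Young's inequality. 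Then restrict back to $I$ and use the embedding $\kH_\ell(\R) \hookrightarrow \kL_\infty(\R)$, $\ell > 1/2$, to replace the $\kL_\infty$ factors, which produces \eqref{eq:uvGm}. Alternatively one can simply interpolate between the two neighbouring integer cases $m = \lceil \ell \rceil$ and $m-1$ using complex interpolation of the bilinear multiplication map, which is the slickest argument but relies on more machinery.

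The main obstacle is keeping the constant genuinely dependent on $\ell$ alone and not on $u$, $v$: this is why the $\kL_\infty$-in-one-factor splitting is essential in each term, and why the interpolation inequalities (rather than crude Sobolev embeddings into every $\kL_p$) are needed for the intermediate derivative terms when $m \ge 2$. One should also be mildly careful that on a \emph{bounded} interval the interpolation and embedding constants are finite and depend on $|I| = b-a$; since $I$ is fixed throughout the paper this is harmless, but it is the point to check. This result is classical and due to Adams \cite{Adams}, so in the paper the proof is simply cited rather than reproduced; the sketch above is the reasoning one would carry out if forced to supply it.
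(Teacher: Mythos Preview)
Your sketch is correct and, as you yourself note at the end, the paper does not prove this lemma at all: it is stated with a citation to Adams \cite{Adams} and used as a black box. So there is nothing to compare against; your argument is simply more than what the paper provides.
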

  
Armed with this result, we can characterize more general superposition
operators where a function $f \colon \mybf{G} \to \R^m$ for some open $\mybf{G}
\subset \R^d$ induces a mapping $u \mapsto f(u)$ between function
spaces.  The $k$th derivative of $f$ as a function on $\R^d$ is a
$k$-linear map on $\R^d$.  As such, it induces a $k$-linear
superposition operator between function spaces.  \emph{A priori}, it
is not clear whether the $k$th Fr\'echet derivative of the
superposition operator of $f$ equals the superposition operator of the
$k$th derivative of $f$ on $\R^d$.  The following lemma and theorem
provide a setting in which this is true, so that we use the symbol
$\D^k f$ for both these objects. 

\begin{lemma} \label{l.superposition}
Let $\mybf{G} \subset \R^d$ be open, let $f \in \kCb^N(\mybf{G};\R^m)$ for some $N
\in \N_0$, and set
\[
  \mybf{\kG} = \{ u \in \mybf{\kC}(\mybf{I};\R^d) \colon u(\mybf I) \subset \mybf{G} \} \,.
\]
Then $f \in \kCb^N(\mybf{\kG}, \kC(\mybf{I};\R^m))$ and the derivatives of $f$ as an
operator from $\kC(\mybf{I};\R^d)$  to $\kC(\mybf{I};\R^m)$
are the superposition operators of the
derivatives of $f$ as a function on $\R^d$.
\end{lemma}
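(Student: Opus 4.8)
The plan is to prove, by induction on $N$, the slightly stronger statement that for \emph{every} finite-dimensional target space the superposition operator $\Phi_g \colon u \mapsto g \circ u$ of a map $g \in \kCb^N$ belongs to $\kCb^N$ between the corresponding $\kC(I;\,\cdot\,)$-spaces, and that its Fr\'echet derivatives are given pointwise by the superposition operators of the derivatives of $g$, i.e.\ $\D^j \Phi_f(u) = \Phi_{\D^j f}(u)$ for $j = 0, \dots, N$, where $\D^j f(u(t))$ is read as a $j$-multilinear map acting on $\kC(I;\R^d)^j$ by pointwise evaluation. First I would record that $\kG$ is open in $\kC(I;\R^d)$: for $u \in \kG$ the image $u(I)$ is compact and contained in the open set $G$, so $2\rho := \dist(u(I), \R^d \setminus G) > 0$, every $v$ with $\norm{v-u}{\kC(I;\R^d)} < \rho$ still has $v(I) \subset G$, and the closed $\rho$-neighbourhood $K$ of $u(I)$ is a compact subset of $G$; this $K$ supplies the compact sets used below. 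Moreover, since each $\D^j f$ extends continuously to $\bar G$ by hypothesis, the operators $\Phi_{\D^j f}$ are still well-defined on the closed set $\{ u \in \kC(I;\R^d) \colon u(I) \subset \bar G \} \supset \cl \kG$, which is what the ``extends continuously to the boundary'' clause in the definition of $\kCb$ requires.

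The base case $N = 0$ is just boundedness and continuity. Boundedness is immediate: $\norm{\Phi_f(u)}{\kC(I;\R^m)} = \sup_{t \in I} \lvert f(u(t)) \rvert \leq \sup_{x \in G} \lvert f(x) \rvert$. For continuity at $u$, use that $f$ is uniformly continuous on the compact set $K$ with some modulus $\omega_f$: for $v$ with $\norm{v-u}{\kC(I;\R^d)} < \rho$ one has $v(I) \subset K$, hence $\norm{\Phi_f(v) - \Phi_f(u)}{\kC(I;\R^m)} \leq \omega_f(\norm{v-u}{\kC(I;\R^d)})$. The same argument, with $f$ replaced by its continuous extension and $K$ by a compact subset of $\bar G$, gives continuity of $\Phi_f$ on $\{ u \colon u(I) \subset \bar G \}$, hence the required boundary extension.

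For the inductive step $N \geq 1$, I would show that $\Phi_f$ is Fr\'echet differentiable on $\kG$ with $\D\Phi_f(u) = \Phi_{\D f}(u)$ acting pointwise, i.e.\ $(\D\Phi_f(u) h)(t) = \D f(u(t))\, h(t)$; this is a bounded linear operator of norm at most $\sup_{x \in G} \norm{\D f(x)}{}$. The remainder is controlled by the first-order Taylor estimate
\[
  \lvert f(x+y) - f(x) - \D f(x) y \rvert
  = \Bigl\lvert \int_0^1 \bigl( \D f(x+sy) - \D f(x) \bigr) y \, \d s \Bigr\rvert
  \leq \lvert y \rvert \; \omega_{\D f}(\lvert y \rvert) \,,
\]
valid whenever the segment $[x,x+y]$ lies in $K$ ($\omega_{\D f}$ being the modulus of continuity of $\D f$ on $K$); applying it with $x = u(t)$, $y = h(t)$ for $\norm{h}{\kC(I;\R^d)} < \rho$ and taking the supremum over $t \in I$ shows the remainder is $o(\norm{h}{\kC(I;\R^d)})$. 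Thus $\D\Phi_f = \iota \circ \Phi_{\D f}$, where $\iota \colon \kC(I;\kE^1(\R^d,\R^m)) \to \kE^1(\kC(I;\R^d), \kC(I;\R^m))$ is the isometric bounded linear map sending a matrix-valued function to the corresponding pointwise-multiplication operator. Since $\D f \in \kCb^{N-1}(G;\kE^1(\R^d,\R^m))$ and $\kE^1(\R^d,\R^m)$ is finite-dimensional, the induction hypothesis gives $\Phi_{\D f} \in \kCb^{N-1}$ with $\D^j \Phi_{\D f} = \Phi_{\D^{j+1} f}$; composing with the bounded linear $\iota$ preserves the class $\kCb^{N-1}$ and the pointwise form of the derivatives, so $\D\Phi_f \in \kCb^{N-1}$, i.e.\ $\Phi_f \in \kCb^N$, with $\D^j \Phi_f = \Phi_{\D^j f}$ for all $j \leq N$; boundary extension of each $\D^j \Phi_f$ follows from that of $\Phi_{\D^j f}$ as in the base case.

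The main obstacle is the differentiability step: the Taylor remainder must be bounded \emph{uniformly in $t \in I$}, which forces working with a single modulus of continuity of $\D f$ on a fixed compact neighbourhood of the compact set $u(I)$ --- this is precisely where compactness of $I$ enters --- and one must read off the Fr\'echet derivative in the exact form $\Phi_{\D f}(u)$ acting pointwise, so that the induction closes and the identification of higher derivatives propagates. The remaining ingredients --- openness of $\kG$, the boundedness estimates, and the bookkeeping with the canonical isometry $\iota$ --- are routine.
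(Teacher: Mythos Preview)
Your proof is correct and takes a genuinely different organizational route from the paper. The paper does not induct on $N$: for each order $n=0,\dots,N$ it writes the $n$th-order Taylor expansion with integral remainder
\[
\Bigl\lvert f(z)-\sum_{i=0}^n \tfrac{1}{i!}\,\D^i f(z_0)(z-z_0)^i\Bigr\rvert
\leq \rho(z_0,z)\,|z-z_0|^n,
\]
substitutes $z_0=u_0(x)$, $z=u(x)$, and takes the supremum over $x\in I$ using compactness of $u_0(I)\cup u(I)$; this identifies the $n$th Fr\'echet derivative directly, and boundedness of $\D^k\Phi_f$ is then read off from the pointwise estimate $\lVert\D^k f(u)(u_1,\dots,u_k)\rVert_{\kC}\le c\,\lVert\D^k f(u)\rVert_{\kC}\prod_i\lVert u_i\rVert_{\kC}$ combined with the $k=0$ case applied to the $\R^{md^k}$-valued map $\D^k f$. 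You, by contrast, carry out only the first-order Taylor step once and then feed $\D f\in\kCb^{N-1}(G;\kE^1(\R^d,\R^m))$ back into the induction hypothesis (which you were careful to formulate for arbitrary finite-dimensional targets), translating via the isometry $\iota$. Your approach is more modular---a single remainder estimate suffices and the higher-order bounds come for free from the inductive structure---while the paper's approach avoids the bookkeeping with $\iota$ and the need to strengthen the induction statement, at the cost of writing out the order-$n$ Taylor remainder for each $n$. Both hinge on the same essential point you identify: uniformity in $t\in I$ forces one to work with a modulus of continuity on a fixed compact neighbourhood of $u(I)$.
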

 
\begin{proof}
We proceed iteratively for $n=0, \dots, N$.  The Taylor theorem with
integral remainder asserts that for fixed $z_0 \in \mybf{G}$
\begin{equation}
  \label{e.taylor}
   \biggl\vert 
     f(z) - \sum_{i=0}^n \frac{\D^i f(z_0)}{i!} \, (z-z_0)^i 
   \biggr\vert
   \leq \rho(z_0,z) \, \lvert z-z_0 \rvert^n
\end{equation}
(when $d>1$, $\D^i f$ is an $i$-linear map acting on the tensor
product $(z-z_0)^i$), where
\[
  \rho(z_0,z)
   = \frac{1}{n!} \, \max_{\theta \in[0,1]}
     \lvert \D^n f(z_0 + \theta(z-z_0)) - \D^n f(z_0) \rvert
\]
is continuous in $z_0,z \in \mybf{G}$ and uniformly continuous for $z_0, z
\in K$ whenever $K \subset \mybf{G}$ is compact.

We now fix $u_0 \in \mybf{\kG}$ and let $u\in \mybf{\kG}$.  
 Clearly,     $u_0(I)$ and $u(I)$  are compact subsets  of
$\mybf{G}$, so that, setting $z_0=u_0(x)$ and $z=u(x)$ in \eqref{e.taylor},
we may take the supremum over $x \in \mybf{I}$, thereby obtaining
\[
  \biggl\Vert
     f(u) - \sum_{i=0}^n \frac{\D^i f(u_0)}{i!} \, (u-u_0)^i 
  \biggr\Vert_{\kC(\mybf I;\R^m)}
  \leq \norm{\rho(u_0,u)}{\kC(\mybf I;\R^m)} \, 
       \norm[n]{u-u_0}{\kC(\mybf I;\R^d)} \,.
\]
Since $\rho(u_0,u_0)=0$, this proves that $f \in \kC^n(\mybf{\kG};
\kC(\mybf I;\R^m))$ and identifies the Fr\'echet derivative of order $n$ as
the superposition operator of the derivative of order $n$ on $\R^d$.

\mybf{Since $f \in \kCb(\mybf{G},\R^m)$, the set $f(\mybf{\kG})$ is a bounded subset of
$\kC(I;\R^m)$.  Moreover $f$ extends continuously to the boundary of
$\mybf{\kG}$ since $f \colon \mybf{G} \to \R^m$ does.}

To prove boundedness of $\D^k f$ as a map from $\mybf{\kG}$ to
$\kE^k(\kC(\mybf I;\R^d), \kC(\mybf I;\R^m))$ \mybf{for $k=1, \dots, N$}, we
employ its identification with the superposition operator of the
$k$-linear map $\D^k f$ on $\R^d$ and estimate
\begin{equation}
  \label{e.derivative-estimate}
  \norm{\D^k f(u)(u_1, \dots, u_k)}{\kC(\mybf I;\R^m)}
  \leq c \, \norm{\D^k f(u)}{\kC(\mybf I;\R^{md^k})} 
       \prod_{i=1}^k 
       \norm{u_i}{\kC(\mybf I;\R^d)}
\end{equation}
\mybf{for some $c>0$, noting that $\D^k f \in \kCb(\mybf{\kG}; \kC({I};
\R^{md^k}))$ by the argument for the case $k=0$.}
\end{proof}

The corresponding result on the Sobolev scale is as follows.

\begin{theorem} \label{t.superposition} 
Let $f \in \mybf{\kCb^N}(\mybf{G};\R^m)$ for some $N \in \N_0$ and open set
$\mybf{G} \subset \R^d$.  For each $\ell=1, \dots, N$, \mybf{let $\mybf{\kG}_\ell$
denote an $\kH_\ell$-bounded and open  subset of $\mybf{\kG} \cap \kH_\ell(\mybf{I};\R^d)$}
with $\mybf{\kG}$ as in Lemma~\ref{l.superposition}.  Then
\[
  f \in \kCb^N(\mybf{\kG}_1; \kL_2(\mybf{I};\R^m)) \cap
        \bigcap_{\substack{k + \ell \leq N \\ \ell \geq 1}}
          \kCb^k(\mybf{\kG}_\ell; \kH_\ell(\mybf{I};\R^m)) \,.
\]
The derivatives of $f$ as an operator on $\kH_\ell$ are the
superposition operators of the derivatives of $f$ as a function from
$\R^d$ to $\R^m$.
\end{theorem}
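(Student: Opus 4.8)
The plan is to bootstrap from the $\kC$-level statement of Lemma~\ref{l.superposition} using two further ingredients: the algebra property of $\kH_\ell(I)$ for $\ell\ge1$ (Lemma~\ref{l.algebra}, applied entrywise to vector- and matrix-valued functions), and the bounded linear embedding $\kH_\ell(I;\R^d)\hookrightarrow\kC(I;\R^d)$, which in particular gives $\kG_\ell\subset\kG$. The $\kL_2$-statement then follows at once: $f\colon\kG_1\to\kL_2(I;\R^m)$ is the composition of the bounded linear inclusion $\kH_1\hookrightarrow\kC$ (restricted to $\kG_1$), the $\kCb^N$-map $f\colon\kG\to\kC(I;\R^m)$ from Lemma~\ref{l.superposition}, and the bounded linear inclusion $\kC(I;\R^m)\hookrightarrow\kL_2(I;\R^m)$; composing a $\kCb^N$-map with bounded linear maps yields a $\kCb^N$-map whose $j$-th derivative is the corresponding superposition operator, with boundedness and continuous extension to $\partial\kG_1$ inherited since the inclusions are globally continuous.

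For the $\kH_\ell$-statement I would run two nested inductions. First, with no derivatives in the function slot, prove by induction on $\ell\ge1$ the assertion $P(\ell)$: \emph{for every $g\in\kCb^M(G;\R^{m'})$ with $M\ge\ell$, the superposition operator $g\colon\kG\cap\kH_\ell(I;\R^d)\to\kH_\ell(I;\R^{m'})$ is well-defined, continuous, bounded on $\kH_\ell$-bounded subsets, and extends continuously to the boundary of any $\kH_\ell$-bounded open subset.} The engine is to peel off one spatial derivative at a time: since $g\in\kC^1$ and $u\in\kH_\ell\hookrightarrow\kC$, the chain rule gives $\partial_x(g(u))=\D g(u)\,\partial_x u$, where $\D g\in\kCb^{M-1}(G;\R^{m'd})$. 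For $\ell=1$ this lies in $\kC\cdot\kL_2\subset\kL_2$ by Lemma~\ref{l.superposition}, so $g(u)\in\kH_1$; for $\ell\ge2$, $P(\ell-1)$ applied to $\D g$ gives $\D g(u)\in\kH_{\ell-1}$, and the algebra property of $\kH_{\ell-1}$ turns $\D g(u)\,\partial_x u$ into an $\kH_{\ell-1}$-function, i.e.\ $g(u)\in\kH_\ell$. The resulting $\kH_\ell$-bounds grow with $\lVert u\rVert_{\kH_\ell}$ but are uniform on $\kH_\ell$-bounded sets, and continuity in $u$ as well as continuous extension to the boundary follow along the same lines; notably, no Fa\`a di Bruno expansion is needed.

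Second, upgrade to $k\ge1$ by induction on $k$, proving: \emph{for $g\in\kCb^M(G;\R^{m'})$ and $\ell\ge1$ with $k+\ell\le M$, the superposition operator of $g$ is of class $\kCb^k$ from each $\kH_\ell$-bounded open subset of $\kG\cap\kH_\ell(I;\R^d)$ into $\kH_\ell(I;\R^{m'})$, with $j$-th derivative ($j\le k$) the superposition operator of $\D^jg$.} The only new point is first-order Fr\'echet differentiability: from the pointwise Taylor expansion with integral remainder one gets, as an identity between superposition operators,
\[
  g(u)-g(u_0)-\D g(u_0)(u-u_0)
  =\Bigl(\int_0^1\bigl[\D g(u_0+\theta(u-u_0))-\D g(u_0)\bigr]\,\d\theta\Bigr)(u-u_0),
\]
whose $\kH_\ell$-norm the algebra property bounds by $c\,\sup_{\theta\in[0,1]}\lVert\D g(u_0+\theta(u-u_0))-\D g(u_0)\rVert_{\kH_\ell}\,\lVert u-u_0\rVert_{\kH_\ell}$. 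Since $M-1\ge\ell$ (as $k\ge1$), $P(\ell)$ makes $v\mapsto\D g(v)$ continuous into $\kH_\ell$, and because every point of the segment lies within $\lVert u-u_0\rVert_{\kH_\ell}$ of $u_0$, that supremum tends to $0$ as $u\to u_0$; hence the remainder is $o(\lVert u-u_0\rVert_{\kH_\ell})$ and $h\mapsto\D g(u)\cdot h$, a bounded operator $\kH_\ell(I;\R^d)\to\kH_\ell(I;\R^{m'})$ by the algebra property and $\D g(u)\in\kH_\ell$, is the derivative. Finally $u\mapsto[h\mapsto\D g(u)\cdot h]$ is the composition of $u\mapsto\D g(u)\in\kH_\ell(I;\R^{m'd})$, which is $\kCb^{k-1}$ by the inductive hypothesis applied to $\D g\in\kCb^{M-1}$ (valid as $(k-1)+\ell\le M-1$), with the bounded linear map $a\mapsto(h\mapsto a\cdot h)$ into $\kE^1(\kH_\ell(I;\R^d),\kH_\ell(I;\R^{m'}))$; hence it is $\kCb^{k-1}$, giving $g\in\kCb^k$ and propagating the derivative identification. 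Taking $g=f$, $M=N$, and ranging over $(k,\ell)$ with $k+\ell\le N$, $\ell\ge1$ yields the $\kH_\ell$-claim.

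The substance here is bookkeeping rather than any single hard step. One must carry the $\kCb$-qualifier---uniform bounds over $\kH_\ell$-bounded sets together with continuous extension to the boundary---through both inductions at once, and track how the smoothness budget is spent: one unit of smoothness of $f$ per spatial derivative $\partial_x$ and one more per Fr\'echet derivative in the function argument, which is precisely what produces the constraint $k+\ell\le N$. Verifying that boundary extension survives the product estimates and the compositions with bounded linear maps, and that the $\kH_\ell$-bounds---which genuinely grow with $\lVert u\rVert_{\kH_\ell}$, whence the necessity of taking $\kG_\ell$ bounded---remain uniform, is where the care must go.
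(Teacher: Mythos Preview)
Your proposal is correct and shares the same skeleton as the paper's proof: the $\kL_2$-statement via Lemma~\ref{l.superposition} and the Sobolev embedding, an induction on $\ell$ driven by $\partial_x g(u)=\D g(u)\,\partial_x u$ to obtain boundedness on the scale, and Taylor expansion plus the algebra property for differentiability.

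The organizational difference is in the differentiability step. The paper writes the full $k$-th order remainder $F_k(u_0,u)=f(u)-\sum_{i\le k}\D^if(u_0)(u-u_0)^i/i!$, differentiates it once in $x$, and estimates $\lVert\partial_x F_k\rVert_{\kH_{\ell-1}}$ (with a separate treatment of $\ell=1$ versus $\ell\ge2$, since $\kH_0=\kL_2$ is not an algebra), thereby obtaining $\kC^{N-\ell}$ at level $\ell$ in one pass and inducting only on $\ell$. You instead use only the first-order integral remainder, estimate it directly in $\kH_\ell$ via the algebra property of $\kH_\ell$ itself, and then run a second induction on the order $k$ by passing from $g$ to $\D g$. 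Your route avoids computing $\partial_x F_k$ explicitly and the $\ell=1$/$\ell\ge2$ split at the differentiability stage, at the price of a nested induction; the paper's route gets all orders $k\le N-\ell$ simultaneously but must track the two-term expression for $\partial_x F_k$. Both lead to the same conclusion with the same regularity accounting $k+\ell\le N$.
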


\begin{proof}
The statement $f \in \mybf{\kCb^N}(\mybf{\kG}_1; \kL_2(\mybf I))$ is a direct
consequence of Lemma~\ref{l.superposition} and the continuity of the
embeddings $\kH_1(\mybf I;\R^m) \subset\kC(\mybf I;\R^m)
\subset \kL_2(\mybf I;\R^m)$.  (The first inclusion is due to the
Sobolev embedding theorem.)

Next, we show that $f\in \mybf{\kCb^\ell}(\mybf{G};\R^m)$ is bounded
as an operator from $\mybf{\kG}_\ell$ to $\kH_\ell(\mybf I;\R^m)$ for
$\ell \mybf{= 1, \dots, N}$.  We proceed inductively in $\ell$.
Since, \mybf{for some $C_\ell>0$,}
\begin{equation}
  \label{e.H_ell-1_to_ell}
  \norm{w}{\kH_\ell} 
  \leq C_\ell \, (\norm{w}{\kH_{\ell-1}} + \norm{w_x}{\kH_{\ell-1}})
\end{equation}
for $w\in\kH_\ell(\mybf I)$, the inductive step is achieved by taking
$w = f(u)$ and showing that $\lVert \partial_x f(u)
\rVert_{\kH_{\ell-1}}$ is bounded over \mybf{$u \in \mybf{\kG}_\ell$}.
Indeed, when $\ell=1$, $\tnorm{f(u)}{\kL_{2}}$ is uniformly bounded in
$u \in \mybf{\mybf{\kG}_1}$ by the argument above, and there is a constant
$c_1>0$ such that
\[
  \norm{\partial_x f(u)}{\kL_{2}(\mybf{I};\R^m)} 
  \leq c_1 \, \norm{\D f(u)}{\kC(\mybf{I};\R^{dm})} \, 
              \norm{u_x}{\mybf{\kL_{2}}(\mybf{I};\R^d)} 
\]
is uniformly bounded for $u \in \mybf{\mybf{\kG}_1}$ by Lemma~\ref{l.superposition}.
We conclude that $f$ is bounded as map from $\mybf{\kG}_1$ to $\kH_1(\mybf{I};\R^m)$.  
When $\ell\geq 2$, applying the algebra inequality \eqref{eq:uvGm}
component-wise, we estimate
\[
  \norm{\partial_x f(u)}{\kH_{\ell-1}(\mybf{I};\R^m)} 
  \leq c_2 \, \norm{\D f(u)}{\kH_{\ell-1}(\mybf{I};\R^{dm})} \, 
              \norm{u_x}{\kH_{\ell-1}(\mybf{I};\R^d)} \,,
\]
for some constant $c_2>0$, where the right side is uniformly bounded
for $u \in \mybf{\mybf{\kG}_\ell}$ since $\tnorm{\D f(u)}{\kH_{\ell-1}
(\mybf{I},\R^{dm})}$ is uniformly bounded for $u \in \mybf{\mybf{\kG}_{\ell-1}}$ by
induction hypothesis.  Thus, by \eqref{e.H_ell-1_to_ell} with
$w=f(u)$, using the induction hypothesis once more, we obtain
boundedness of $f \colon \mybf{\kG}_\ell \to \kH_\ell(\mybf{I};\R^m)$.

To prove continuity and continuous differentiability of $f \colon
\mybf{\kG}_\ell \to \kH_\ell$, we introduce, for $k=0, \ldots, N-1$,
\[
  F_k(u_0,u) 
  = f(u) - \sum_{i=0}^k \frac{\D^i f(u_0)}{i!} \, (u-u_0)^i 
\]
and write $\partial_x F_k(u_0,u)$ in the form
\[
  \partial_x F_k
  =  \biggl[
      \D f(u) - \sum_{i=0}^k \frac{\D^{i+1}f(u_0)}{i!} \, (u-u_0)^i
    \biggr] \, \partial_x u 
    + \frac{\D^{k+1}f(u_0)}{k!} \, (u-u_0)^k \, 
      \partial_x(u-u_0) \,.
\]
When $\ell=1$, we estimate for every $k=0, \dots, N-1$, using
Lemma~\ref{l.superposition} and the Sobolev embedding theorem again,
that, \mybf{for $u \in \kG_\ell$},
\begin{align}
  \norm{\partial_x F_k}{\kH_{\ell-1}(\mybf{I};\R^m)}
  & \leq c_3 \,
    \biggl\lVert
      \D f(u) - \sum_{i=0}^k \frac{\D^{i+1}f(u_0)}{i!} \, (u-u_0)^i
    \biggr\rVert_{\kC(\mybf{I})} \, 
    \norm{u}{\kH_\ell} 
    \notag \\
  & \quad
    + c_3 \, \frac{1}{k!} \, \norm{\D^{k+1}f(u_0)}{\kCb(\mybf{I})} \, 
      \norm[k]{u-u_0}{\kC(\mybf{I})} \, 
      \norm{u-u_0}{\kH_\ell(\mybf{I})} \,
    \notag \\
  & \leq \sigma(u_0,u) \, \norm[k]{u-u_0}{\kH_\ell(\mybf{I})} \vphantom \int
  \label{e.Fx}
\end{align}
for some $\sigma \in \kC(\mybf{\kG}_\ell\times \mybf{\kG}_\ell;\R^+_0)$ with
$\sigma(u_0,u)=0$ and some constant $c_3>0$.  Moreover, since $f \in
\kC^N(\mybf{\kG}_1;\kL_2)$, there exists a function $\omega \in
\kC(\mybf{\kG}_1\times \mybf{\kG}_1;\R^+_0)$ with $\omega(u_0,u)=0$ such that
$\lVert F_k \rVert_{\kL_2} \leq \omega (u_0,u) \,
\norm[k]{u-u_0}{\kH_1}$.  Hence, \eqref{e.H_ell-1_to_ell} with
$w=F_k(u_0,u)$ implies $f \in \kC^{N-1} (\mybf{\kG}_1;\kH_1)$.
 
When $\ell \geq 2$, we obtain, by applying \eqref{eq:uvGm} recursively
and component-wise to the second term of $\partial_x F_k$, an estimate
as on the first and second line of \eqref{e.Fx} with
$\kH_{\ell-1}(\mybf{I})$ in place of $\kC(I)$ for every $k=0, \ldots,
N-\ell$.  Applying the induction hypothesis to both $f$ and $\D f$
shows, as before, that $f \in \kC^{N-\ell} (\mybf{\kG}_\ell;\kH_\ell)$ and
that its derivatives are the superposition operators of the
derivatives of $f$ as a function on $\R^d$.
 
Due to this identification, we can prove boundedness of $\D^k f$ as a
map from $\mybf{\kG}_\ell$ to $\kE^k(\kH_\ell(\mybf{I};\R^d),
\kH_\ell(\mybf{I};\R^m))$ by applying \eqref{eq:uvGm} recursively and
component-wise to $\D^k f(u)(u_1,\dots,u_k)$.  In this way we obtain
an estimate of the form \eqref{e.derivative-estimate} with $\kH_\ell$
in place of $\kCb$.  The bound is then achieved by noting that $\D^k f
\colon \mybf{\kG}_\ell \to \kH_\ell(\mybf{I};\R^{md^{k}})$ is a
bounded operator by the argument provided earlier in this proof for
$k+\ell \leq N$.

\mybf{Finally, we need to show that $\D^k f \colon \mybf{\kG}_\ell\to
\kE^k(\kH_\ell(I;\R^d), \kH_\ell(I;\R^m))$ extends continuously to the
boundary of $\mybf{\kG}_\ell$ when $k+\ell \leq N$.  For $k=0$ this follows
recursively from \eqref{e.Fx} and $f \in \kCb^N(\mybf{\kG};\kC(I))$ as
above.  Applying this result to $\D^k f \colon \mybf{\kG}_\ell \to
\kH_\ell({I};\R^{md^{k}})$ and using once again the identification of
derivatives of the superposition operator with the superposition
operators of the derivatives, we complete the proof.}
\end{proof}

\subsection{Example: the semilinear wave equation}
\label{ss.swe}

In the case of the semilinear wave equation \eqref{e.swe}, the
operators $A$ and $B$ are given by \eqref{e.AB}. 

\subsubsection{Periodic boundary conditions}
\label{sss.nwe-S1}

Since the Laplacian is diagonal in the Fourier representation, it is
easy to see that the spectrum of $A$ is given by $\spec A = \{ \i k
\colon k \in \Z\}$ and that the group generated by $\Q_0 A$ is unitary
on any
\[
  \kY_{\ell} = \kH_{\ell+1}(I;\R) \times \kH_{\ell}(I;\R) 
  \quad \text{for } \ell \in \N_0 \,.
\]
Here $\P_0$ is the spectral projection associated with eigenvalue $0$
and $\Q_0=\id-\P_0$.  Hence, $A$ generates a $C_0$-group on
$\kY_{\ell}$ and assumption (A0) is met.  The full group $\e^{tA}$,
however, is not unitary due to the secular term from the Jordan block
of $A$ when restricted to $\P_0 \kY_{\ell}$.

Assume that the nonlinearity $f$ of the semilinear wave equation
\eqref{e.swe} satisfies $f \in \mybf{\kCb^N}(\mybf{G};\R)$ for some
$N\in \N$ and some open set $\mybf{G} \subset \R$, \mybf{and let $\kD
= \kD_u \times \kD_v$ where $\kD_u$ is the set $\mybf{\kG}_1$ from
Theorem~\ref{t.superposition}} \mybf{and $\kD_v$ denotes an open
bounded subset of $\kL_2(I)$}.  Then, by
Theorem~\ref{t.superposition}, the nonlinearity $B$ satisfies
assumption (B1) on the scale defined above with $K<N$ if we
recursively define $\kD_k = \kD_{k-1}^{-\delta_*} \cap
\interior(\kB_R^{\kY_k}(0))$ for some $R>0$ with $\kD \subset
\kB_R^{\kY}(0)$ and choose $\delta_*>0$ small enough to ensure that
all $\kD_k$ are non-empty.  Hence, Theorems~\ref{t.local-diff}
and~\ref{t.local-diff-unif} give regularity of the flow of the
semilinear wave equation on the scale $\kY_k$ defined above.

\subsubsection{Neumann boundary conditions}
\label{sss.Neumann-NWE}

In the case of Neumann boundary conditions on $I = [0,\pi]$, we set
$\kY = \kH_1(I,\R) \times \kL_2(I,\R)$ as before; the operator $A$
then has the same spectrum and $\e^{t\Q_0 A}$ is again unitary.  In
this case, $\kY_k = \kH_{k+1}^\nb(I,\R) \times \kH_{k}^\nb(I,\R)$ with
\[
  \kH_{k}^\nb(I,\R) 
  = \{u \in \kH_{k}(I,\R) \colon u^{(2j+1)}(0) = u^{(2j+1)}(\pi)=0
    \text{ for }  j = 0, \ldots, \lfloor k/2 \rfloor - 1  \} \,.
\]
When $G \subset \R$ is open and $f \in \kCb^N(G;\R)$, assumption (B0)
holds as before on the open bounded set $\kD \subset \kY$ defined
above.  We claim that (B1) also holds for $K<N$.  To prove the claim,
we must show that $f$ maps $\kH_{k+1}^\nb(I,\R) \cap \kD_u$ into
$\kH_k^\nb(I,\R)$ for $k=0,\ldots, K$.  When $k=1$, no boundary
conditions need to be checked.  When $k=2$, we observe that
$(\partial_x f(u))(x) = f'(u(x)) \, u_x(x) =0$ for $x=0,\pi$ and $u
\in \kH_2^\nb \cap \kD_u$, so $f(u) \in \kH_1^\nb$.  Further, when $k
= 3, \dots, K$, all terms in the sum obtained from computing
$\partial_x^{2j+1} f(u)$ contain at least one odd derivative of $u$ of
order at most $2j+1$, so the boundary conditions remain satisfied.

\subsubsection{Dirichlet boundary conditions}
\label{sss.Dirichlet-NWE}

When endowed with Dirichlet boundary conditions, $A$ generates a
unitary semigroup. 
We take $I =[0,\pi]$ as before and set $\kY_k =\kH_{k+1}^0(I,\R)
\times \kH_k^0(I,\R)$, where
\begin{equation*}
  \kH_k^0(I,\R) = \{ u \in \kH_k(I,\R) \colon
  u^{(2j)}(0)  = u^{(2j)}(\pi) = 0 \text{ for } 
  j \in \N_0 \text{ with } 2j \leq k-1 \} \,.
\end{equation*}
Let $G \subset \R$ be open with $0 \in G$ and let $f \in\kCb^N(G,\R)$
as before.  Then condition (B0) is satisfied with $\kD= \kD_u \times
\kD_v$, as before.  Condition (B1) is satisfied if
$f^{(2j)}(0) = 0$ for $0\leq 2j \leq K-1$.

\mybf{When $f$ does not satisfy these boundary conditions,
\emph{necessary} conditions for the existence of time derivatives take
a complicated structure.  To see this, it suffices to consider
differentiability at $t=0$.  For $U'(0)$ to exist, we have the obvious
requirement that $v(0,0) = v( {0},\pi)=0$.  For $U''(0)$ to exist, the
non-homogeneous boundary condition $\partial_x^2 u(0,0) = \partial_x^2
u(0,\pi) = -f(0)$ needs to be satisfied.  For $U'''(0)$ to exist,
$\partial_x^2 v(0,0) = \partial_x^2 v(0,\pi)= 0$ must hold.  Finally,
for $U^{(4)}(0)$ to exist, a straightforward computation shows that
$\partial_x^4 u(0,x) + f''(0) \, u_x^2(0,x) = f'(0) \, f(0)$ must hold
at $x=0,\pi$.  This nonlinear boundary condition is difficult to
handle, and in this situation the space of initial conditions allowing
temporally smooth solutions is not an open set in a suitable Hilbert
space.  Therefore, we restrict our attention to nonlinearities $B$ of
the semilinear evolution equation \eqref{e.pde} which satisfy
condition (B1).}

\subsubsection{The semilinear wave equation on the line}
\label{sss.nwe-line}

When $I=\R$, we take $\kY = \kH_1(\R)\times \kL_2(\R)$.  Using the
Fourier transform, we verify that $\e^{t A}$ is unitary on $\kY$.
Lemma \ref{l.algebra} remains valid with $I = \R$, but the assertions
of Theorem~\ref{t.superposition} only hold true provided $0 \in G$ and
$f(0)=0$.  For example, when $f$ is a polynomial without constant term
and $\kD_k = \kB_R^{\kY_k}(0)$ for some $R>0$, then $B$ satisfies condition
(B1) and Theorem~\ref{t.local-diff} applies.

\subsubsection{A semilinear wave equation in an inhomogeneous
material} 
\label{sss.swe-nonhom}

Instead of \eqref{e.swe}, let us consider the non-constant coefficient
semilinear wave equation
\[ 
  \partial_{tt} u = \partial_x (a \, \partial_x u) + b \, u + f(u)
\]
where $a,b \in \kCb^N(I;\R)$ with $a(x) >0$ and $b(x) \leq 0$ for $x
\in I$.  For periodic boundary conditions and on the line, the setting
and conclusions of Sections~\ref{sss.nwe-S1} and~\ref{sss.nwe-line}
apply.  For Dirichlet boundary conditions on $I = [0,\pi]$, the spaces
$\kY_k$ also carry over from Section~\ref{sss.Dirichlet-NWE} and it is
straightforward to verify that (B1) is satisfied with $K=4$ provided
$f(0) = f''(0) = 0$ and $N>K$.

We remark that   the semilinear wave equation in inhomogeneous
media can, in principle, be solved numerically by splitting methods
(see the introduction for references).  Here, however, splitting
methods lose their advantage because the explicit computation of
$\e^{tA}$ is expensive for operators with non-constant coefficients.

%%%%%%%%%%%%%%%%%%%%%%%%%%%%%%%%%%%%%%%%%%%%%%%%%%%%%%%%%%%%%%%%%%%%%%%%%%
%%%%%%%%%%%%%%%%%%%%%%%%%%%%%%%%%%%%%%%%%%%%%%%%%%%%%%%%%%%%%%%%%%%%%%%%%%

\subsection{Example: the nonlinear Schr\"odinger
  equation}
\label{ss.nse}

We first consider periodic boundary conditions.  In this case, the
Laplacian is diagonal in the Fourier representation with eigenvalues
$-k^2$ and $A$ generates a unitary group on $ \kL_2(I;\C)$ and, more
generally, on $\kH_{\ell}(I;\C)$ with $\ell \in \N_0$.

In the notation of Section~\ref{ss.genSett}, we choose $\kY_{\ell} =
\kH_{2\ell+1}(I;\C)$.  Then (A0) is satisfied.  If the potential $V(u,
\overline u)$ satisfies $V \in \mybf{\kCb^{K+2+N}}(\mybf{G};\R)$ with
$K<N$ for some open subset $\mybf{G} \subset \R^2 \equiv \C$ then, by
Theorem~\ref{t.superposition}, the nonlinearity $B$ defined in
\eqref{e.nlsdefs} satisfies assumption (B1) with $\kD=\kG_1$ from
Theorem~\ref{t.superposition} and $\kD_k$ defined recursively
as for the semilinear wave equation (Section \ref{sss.nwe-S1}). 
 Therefore, Theorem~\ref{t.local-diff} and
Remark~\ref{rem:HamPazy} assert the existence of a flow $\Phi$ on
$\kY$ and specify its regularity.

In the case of Neumann boundary conditions, we choose $\kY_\ell =
\kH_{2\ell+1}^\nb(I;\C)$ with $I=[0,\pi]$ and $\kH_{2\ell+1}^\nb$
defined in Section~\ref{sss.Neumann-NWE}, so that (B1) is satisfied.

In the case of Dirichlet boundary conditions, we choose $\kY_\ell =
\kH_{2\ell+1}^0(I;\C)$ as defined in Section~\ref{sss.Dirichlet-NWE}.
Then (B1) is satisfied for any $V(u,\bar{u}) =v(|u|^{2})$ where $v \in
\kCb^{K+2+N}(\R^+_0;\R)$, in particular for the standard case where
$V(u) = \lvert u \rvert^4/2$.

On the line, $\kY_\ell = \kH_{2\ell+1}(\R;\C)$ and condition (B1) is
satisfied if, for example, $V(u)$ is a polynomial in $u_1=\Re (u)$ and
$u_2=\Im (u)$ with no linear term, so that $f(0) = \partial_{\bar u}
V(0,0)= 0$.

\begin{remark}
\mybf{While the setup in this section concern PDEs in one spatial
dimension, our results on superposition operators 
can be extended to ``nice'' $n$-dimensional
spatial domains since Lemma~\ref{l.algebra} holds on
$\kH_\ell(\Omega,\R^d)$ for $\ell > n/2$ \cite{Adams}, when, e.g.,
$\Omega \subset \R^n$ is a domain with smooth boundary or $\Omega =
\R^n$.  So we could also consider the nonlinear Schr\"odinger
equation on $\R^2$ and $\R^3$.}
\end{remark}

\begin{remark}[Inhomogeneous boundary conditions]
\mybf{We can treat inhomogeneous time-independent mixed linear boundary conditions of
the form $\operatorname{BC}(U)= g$ for the above examples by solving
$Av = 0$, $\operatorname{BC}(v)= g$ and then applying a Runge--Kutta
method to $U-v$.  This is equivalent to applying a Runge-Kutta method
to $U$ with boundary conditions $\operatorname{BC}(U)= g$, cf.\ the
discussion in \cite{LubOst93}.}
\end{remark}

%%%%%%%%%%%%%%%%%%%%%%%%%%%%%%%%%%%%%%%%%%%%%%%%%%%%%%%%%%%%%%%%%%%%%%%%
%%%%%%%%%%%%%%%%%%%%%%%%%%%%%%%%%%%%%%%%%%%%%%%%%%%%%%%%%%%%%%%%%%%%%%%%
%%%%%%%%%%%%%%%%%%%%%%%%%%%%%%%%%%%%%%%%%%%%%%%%%%%%%%%%%%%%%%%%%%%%%%%%
%%%%%%%%%%%%%%%%%%%%%%%%%%%%%%%%%%%%%%%%%%%%%%%%%%%%%%%%%%%%%%%%%%%%%%%%

\section{{\em A}-stable Runge--Kutta methods on \mybf{Hilbert} spaces}
\label{s.rk}

In this section, we first prove an abstract convergence result for
discretizations of evolution equations on \mybf{Hilbert} spaces.  Then, in
Section~\ref{ss.RKSett}, we introduce a class of   $A$-stable Runge--Kutta
methods which are well defined when applied to the semilinear PDE
\eqref{e.pde} under assumptions (A0) and (B0).  In
Section~\ref{ss.convergence}, we study the regularity of $A$-stable
Runge--Kutta methods under the additional condition (B1) and finally
apply the abstract convergence result to those schemes.

%%%%%%%%%%%%%%%%%%%%%%%%%%%%%%%%%%%%%%%%%%%%%%%%%%%%%%%%%%%%%%%%%%%%%%%%

\subsection{\mybf{An abstract convergence theorem on Hilbert spaces}}
\label{ss.AbstrDiscr}

In this section we prove an abstract 
convergence result
for evolution equations on Hilbert spaces, Theorem~\ref{t.genConv}.
\mybf{Although this theorem is modeled after the basic local
convergence result for ODEs and there are a lot of results on the
convergence of time discretizations of specific PDEs in the
literature, see Section~\ref{s.intro}, we are not aware of any
result that is as general as this theorem.}

In the classical setting of ordinary differential equations $\dot y =
f(y)$, a one-step method $y^{n+1} = \psi^h(y^n)$ is of order $p$ if,
formally, $y(h) - \psi^h(y^0) = O(h^{p+1})$.  In other words, the
local error is controlled by the Taylor integral remainder of order
$p+1$.  It is then easy to show that the method is globally convergent
of order $p$; see, e.g., \cite{DdBor}.

The situation is more subtle in the case of a differential equation
\begin{equation}
  \label{eq:pde}
  \dot{U} = F(U)
\end{equation}
on a \mybf{Hilbert} space $\kX$: First, it is not clear whether the
time-$h$ map $\Psi^h$ associated with a given one-step method applied
to \eqref{eq:pde} is well defined as map from an open subset of $\kX$
to itself.  It depends on the equation and on the chosen one-step
method, and typically fails for explicit \mybf{Runge--Kutta} methods.
Second, even if $U \mapsto \Psi^h(U)$ is well defined and continuous,
its derivatives with respect to $h$ will usually fail to be defined on
the same set.  Thus, in order to control the Taylor remainder $U(h)
-\Psi^h(U^0)$ in the case of a discretization of a PDE \eqref{eq:pde},
we must consider the remainder as a map from a space $\kZ$ of high
regularity into a space $\kX$ of low regularity.  In this setting, the
usual proof that consistent one-step methods are convergent applies
under the following assumptions.

Let $\kX$ and $\kZ \subset \kX$ be \mybf{Hilbert} spaces, where $\kZ$
is continuously embedded in $\kX$ and let $\Psi^h$ be a one-step
discretization of \eqref{eq:pde} which is of classical order $p$.
Assume there exist sets $\kD_\kX \subset\kX$ and $\kD_\kZ \subset \kZ$
such that $\kD_\kX$ is open in $\kX$, $\kD_\kZ \subset \kD_\kX$, and
there exist constants $h_*>0$, \mybf{$\Theta_*>0$}, such that the
following hold.
\begin{itemize}
\item[\mybf{(C1)}] For fixed $h \in [0,h_*]$, the map $U \mapsto
\Psi^h(U)$ is $\kC^1(\kD_\kX;\kX)$. 
\mybf{Moreover, there exists a possibly $h$-dependent norm
$\tnorm{\,\cdot\,}{\kX,h}$ on $\kX$ with
\begin{equation}
  \label{e.h-depNorms}
  \norm{U}{\kX} \leq \norm{U}{\kX,h} 
  \leq \Theta_* \, \norm{U}{\kX} 
\end{equation}
for all $U \in \kX$ and $h \in [0, h_*]$}
such that 
\begin{equation}
  \label{e.dpsi-bound}
  \sup_{U \in \kD_\kX} 
  \norm{\D \Psi^h(U)} {\kE(\kX),\mybf h} = 1 + O(h) 
\end{equation}
for all $h \in [0, h_*]$.  \mybf{Here, $\norm{\cdot} {\kE(\kX),h}$
denotes the operator norm induced by $\norm{\cdot} {\kX,h}$.}

\item[\mybf{(C2)}] For fixed $U \in \kD_\kZ$, the map  $h \mapsto \Psi^h(U)$
  is in $\kC^{p+1}([0,h_*]; \kX)$, and
\begin{equation}\label{e.dhp}
 \sup_{\substack{U \in \kD_\kZ \\ h\in [0,h_*]}}
 \norm{\partial_h^{p+1} \Psi^h(U)}{\kX} < \infty.
 \end{equation}
\end{itemize}
\mybf{Condition (C1) can be seen as a stability condition, whereas condition
(C2) ensures consistency.}

\begin{theorem} \label{t.genConv}
In the setting above, fix $U^0 \in \kD_\kZ$ and suppose that there
exists a solution
\begin{equation}
  \label{e.solution}
  U \in \kC ([0,T]; \kD_\kZ) \cap \kC^{p+1} ([0,T]; \kD_\kX)
\end{equation}
to the initial value problem \eqref{eq:pde} for some $T>0$ with
$U(0)=U^0$.  Let $\Psi^h$ be a one-step discretization of
\eqref{eq:pde} of order $p \geq 1$; let $U^m = (\Psi^h)^m(U^0)$ denote
the associated numerical solution.

Then there exist constants $h_*>0$, $c_1$, and $c_2$, depending only
on $T$, the norm of $U$ in $\kC^{p+1}([0,T];\kX)$, $\dist_\kX(\{U(t)
\colon t \in [0,T] \}, \partial \kD_\kX)$, and on the constants from
\mybf{\eqref{e.h-depNorms}}, \eqref{e.dpsi-bound} and \eqref{e.dhp},
such that for every $h \in [0,h_*]$,
\[
  \norm{U^m - U(mh) }{\kX} \leq c_2 \, \e^{c_1 mh} \, h^p
\]
so long as $mh \leq T$. 
\end{theorem}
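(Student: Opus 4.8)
The plan is to run the classical accumulation-of-local-errors argument (``Lady Windermere's fan''), but carefully separating the high-regularity space $\kZ$ — where consistency lives — from the low-regularity space $\kX$ — where stability and the final estimate live — and working in the $h$-dependent norm supplied by (C1). Throughout, write $t_m = mh$ and $E^m = U^m - U(t_m)$, so $E^0 = 0$.

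First I would estimate the local (consistency) error $e^m = U(t_{m+1}) - \Psi^h(U(t_m))$. Since $U(t_m) \in \kD_\kZ$ by \eqref{e.solution}, condition (C2) applies to the curve $s \mapsto \Psi^s(U(t_m))$, which is then $\kC^{p+1}$ into $\kX$ with $\partial_s^{p+1}$ bounded uniformly by \eqref{e.dhp}; the curve $s \mapsto U(t_m + s)$ is $\kC^{p+1}$ into $\kX$ by \eqref{e.solution}. Because $\Psi^h$ is of classical order $p$, the Taylor polynomials of degree $p$ at $s=0$ of these two curves coincide — this is the algebraic order-condition statement, and it is exactly at data in $\kD_\kZ$ that the elementary-differential expressions defining those coefficients make sense. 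Subtracting the two Taylor expansions with integral remainder therefore gives $\norm{e^m}{\kX} \le c_0 \, h^{p+1}$, with $c_0$ depending only on $\norm{U}{\kC^{p+1}([0,T];\kX)}$ and the supremum in \eqref{e.dhp}.

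Next, the error propagation. Using $U(t_{m+1}) = \Psi^h(U(t_m)) + e^m$ we obtain
\[
  E^{m+1} = \bigl( \Psi^h(U^m) - \Psi^h(U(t_m)) \bigr) - e^m \,.
\]
Assume inductively that $U^j \in \kD_\kX$ for $j \le m$ and that the segment joining $U(t_m)$ to $U^m$ lies in $\kD_\kX$. Then the mean value inequality, applied in the norm $\norm{\,\cdot\,}{\kX,h}$ together with \eqref{e.dpsi-bound}, gives $\norm{\Psi^h(U^m) - \Psi^h(U(t_m))}{\kX,h} \le (1 + c_1 h) \, \norm{E^m}{\kX,h}$, while \eqref{e.h-depNorms} yields $\norm{e^m}{\kX,h} \le \Theta_* c_0 h^{p+1}$. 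Hence $\norm{E^{m+1}}{\kX,h} \le (1 + c_1 h)\, \norm{E^m}{\kX,h} + \Theta_* c_0 h^{p+1}$, and discrete Gr\"onwall with $E^0 = 0$ produces $\norm{E^m}{\kX,h} \le (\Theta_* c_0/c_1)\,(\e^{c_1 mh} - 1)\, h^p$; converting back via \eqref{e.h-depNorms} gives $\norm{E^m}{\kX} \le c_2 \, \e^{c_1 mh} \, h^p$ with $c_2 = \Theta_* c_0/c_1$.

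Finally I must close the induction. The derivative bound and the definition of the next iterate were legitimate only because $U^m \in \kD_\kX$ and the segment $[U(t_m), U^m] \subset \kD_\kX$. Since $\{U(t) : t \in [0,T]\}$ is compact in $\kX$ and disjoint from the closed set $\partial\kD_\kX$, the number $d = \dist_\kX(\{U(t) : t \in [0,T]\}, \partial\kD_\kX)$ is positive, so each ball $\kB^\kX_d(U(t_m))$ lies in $\kD_\kX$ and, being convex, contains the segment from $U(t_m)$ to any point within $\kX$-distance $d$ of it. It therefore suffices to choose $h_* > 0$ so small that $c_2 \, \e^{c_1 T} \, h_*^p < d$: the bound just derived at step $m$ then forces $\norm{E^m}{\kX} < d$, hence $U^m \in \kD_\kX$ and $[U(t_m), U^m] \subset \kD_\kX$, so the next step and the next mean value estimate are justified, and the induction closes. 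The main obstacle is precisely this circularity — the error estimate is needed to keep the iterates inside $\kD_\kX$, while staying inside $\kD_\kX$ is needed for the error estimate — which is resolved by the a priori smallness of $h_*$, provided one is careful that none of $c_0, c_1, c_2, \Theta_*$ depends on $h$ or on the number of steps; a secondary point requiring care is turning the hypothesis ``$\Psi^h$ has classical order $p$'' into the rigorous Taylor-cancellation statement on the correct domain $\kD_\kZ$.
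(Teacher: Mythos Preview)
Your proposal is correct and follows essentially the same route as the paper: work in the $h$-dependent norm $\norm{\,\cdot\,}{\kX,h}$, bound the local error by Taylor expansion using the order-$p$ condition together with \eqref{e.dhp} and \eqref{e.solution}, propagate via the mean value inequality and \eqref{e.dpsi-bound}, sum with discrete Gr\"onwall, and close the bootstrap by choosing $h_*$ small enough that the iterates stay within $\dist_\kX(\{U(t)\},\partial\kD_\kX)$ of the exact trajectory. Your treatment of the bootstrap (making the segment condition for the mean value inequality explicit) is in fact slightly more detailed than the paper's.
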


\begin{proof}
Since $\kD_\kX$ is open, there is some $\delta >0$ such that
$\kB^\kX_\delta(U(t)) \subset \kD_\kX$ for each $t \in [0,T]$.
Setting
\[
  E_m = \norm{U^m - U(mh)}{\kX,\mybf{h}} \,,
\]
we estimate, with $\Phi^t(U(s)) = U(t+s)$,
\begin{align*}
  E_{m+1}
  & \leq \norm{\Psi^h(U^m) - \Psi^h(U(mh))}{\kX,\mybf{h}}
         + \norm{\Psi^h(U(mh)) - \Phi^h(U(mh))}{\kX,\mybf{h}}
         \notag \\
  & \leq \sup_{\theta \in [0,1]} 
         \norm{\D \Psi^h(U(mh) + \theta \, (U^m-U(mh)))}{\mybf{\kE(\kX),h}} 
         \, E_m \notag \\
  & \quad
         + \frac{\mybf{\Theta_*} \, h^{p+1}}{(p+1)!} \, 
         \sup_{s \in [0,h]}
         \Bigl(
           \norm{\partial_s^{p+1} \Psi^s(U(mh))}{\kX} + 
           \norm{\partial_s^{p+1} \Phi^s(U(mh))}{\kX}
         \Bigr) 
         \notag \\
  & \leq \sup_{U \in \kD_\kX} \norm{\D \Psi^h(U)}{\mybf{\kE(\kX),h}} \, E_m 
         \notag \\
  & \quad
         + \frac{\mybf{\Theta_*} \, h^{p+1}}{(p+1)!} \, 
         \sup_{t \in [0,T]}
         \Bigl(
           \sup_{h\in [0,h_*]} \norm{\partial_h^{p+1} \Psi^h(U(t))}{\kX} 
           + \norm{\partial_t^{p+1} U(t)}{\kX} 
         \Bigr) 
         \notag \\
  & \leq (1 + c_1 \, h) \, E_m + c_3 \, h^{p+1} \,.
\end{align*}
The suprema in the estimate above are finite due to
\eqref{e.dpsi-bound}, \eqref{e.dhp} and \eqref{e.solution},
respectively, so long as $E_m < \delta$ since then, due to
\eqref{e.h-depNorms}, $U(mh) + \theta \, (U^m-U(mh)) \in \kD_\kX$.

Thus, since $E_0=0$,
\[
  E_m \leq c_3 \, h^{p+1} \, \frac{(1+h \, c_1)^m - 1}{h \, c_1}
      \leq \frac{c_3}{c_1} \, 
           \biggl( 1 + \frac{mh \, c_1}m \biggr)^m \, h^p
      \leq c_2 \, \e^{c_1 mh} \, h^p \,.
\]
Thus, we can choose $h_*$ small enough such that $E_m < \delta$ for
all $m \leq T/h_*$.  This concludes the proof.
\end{proof}

\begin{remark}
\mybf{The proof of Theorem~\ref{t.genConv} does not use any Hilbert
space structure, so that the result holds true when $\kX$ and $\kZ$
are Banach spaces.  However, condition (C1) is rather restrictive on
general Banach spaces, see Remark~\ref{rem:d.orderWrong} below and the
discussion in the introduction.}
\end{remark}

%%%%%%%%%%%%%%%%%%%%%%%%%%%%%%%%%%%%%%%%%%%%%%%%%%%%%%%%%%%%%%%%%%%%%
 
\subsection{Regularity of $A$-stable Runge--Kutta discretizations}
\label{ss.RKSett}

Applying an $s$-stage Runge--Kutta method to the semilinear evolution
equation \eqref{e.pde}, we obtain
\begin{subequations} \label{e.as-rk}
\begin{align}
  W   & = {U}^0 \, \1 + h \, {\a} \, 
          \bigl( {A} {W} + {B}(W) \bigr) \,,
          \label{eq:RKStagesIter} \\
  U^1 & = U^0 + h \, {\b}^T \, 
          \bigl( {A} W + {B}(W) \bigr) \,. 
          \label{eq:RKUpdate}
\end{align}
\end{subequations}
We write, with $U \in \kY$, 
\[
  \mybf{\1 U = 
  \begin{pmatrix}
    U \\ \vdots \\ U
  \end{pmatrix} \in \kY^s} \,, 
  \quad
  W = 
  \begin{pmatrix}
    W^1 \\ \vdots \\ W^s
  \end{pmatrix} \,,
  \quad
  {B}(W) = 
  \begin{pmatrix}
    B(W^1) \\ \vdots \\ B(W^s)
  \end{pmatrix} \,,
\]
where $W^1, \dots, W^s$ are the stages of the Runge--Kutta method,
\[
  (\a W)^i = \sum_{j=1}^s \a_{ij} \, W^j \,,
  \qquad
  \b^T W  = \sum_{j=1}^s \b_{j} \, W^j \,,
\]
and $A$ acts diagonally on the stages, i.e., $({A}W)^i = A W^i$ for
$i=1,\dots,s$.  
 
Written this way, it is not transparent that, under certain
conditions, this class of methods results in a well defined numerical
time-$h$ map $\Psi^h$ on a \mybf{Hilbert} space $\kY$.  A more
suitable form is achieved by rewriting \eqref{eq:RKStagesIter} as
\begin{equation}
  \label{eq:newRKStagesIter}
  W = \Pi(W; U, h) 
    \equiv (\id - h  {\a} {A} )^{-1} \, 
    (\1 U + h  {\a}  {B}(W)) \,.
\end{equation}
Noting that 
\begin{equation}
  (\id - h \a A)^{-1} = \id + h\a A \, (\id - h \a A)^{-1}
  \label{e.inv-ident}
\end{equation}
and inserting \eqref{eq:newRKStagesIter} into \eqref{eq:RKUpdate}, we
obtain 
\begin{align}
  \Psi^h(U) 
  & = U + h {\b}^T \, \bigl( A W(U,h) + B(W(U,h)) \bigr)
      \notag \\
  & = \mS(hA) U + h {\b}^T \, (\id - h \a A)^{-1} \, B(W(U,h)) \,,
  \label{eq:NewRKUpdate}
\end{align}
where $S$ is the so-called \emph{stability function}
\begin{equation}
  \label{e.SzRK}
  \mS(z) = 1 + z \b^T \, (\id - z \a)^{-1} \, \1 \,.
\end{equation}

We now make a number of assumptions on the method and its interaction
with the linear operator $A$.  First, we assume that the method is
$A$-stable in the sense of \cite{LubOst93}.  Setting $\C^- = \{ z \in \C
\colon \Re z \leq 0\}$, the conditions are as follows.
\begin{itemize}
\item[(RK1)] The stability function \eqref{e.SzRK} is bounded with
$\vert \mS(z) \rvert \leq 1$ for all $z \in \C^-$.
\item[(RK2)] The $s \times s$ matrices $\id - z \a$ are invertible for
all $z \in \C^-$.
\end{itemize}
Sometimes, we will also assume that $\a$ is invertible.

\begin{remark} \label{r.eigenvalues} 
The matrix $\id - z \a$ is invertible for all $z \in \C^-$ if and only
if $\a$ has no eigenvalues in $\C^- \setminus \{0\}$.  Its inverse is
then bounded uniformly for $z \in \C^-$ by a constant $\Lambda \geq 1$
(insert, in particular, $z=0$).
\end{remark}

\begin{remark}
In general, Runge--Kutta methods are called $A(\theta)$-stable for
some $\theta \in [0,\pi/2]$ if $\lvert \mS(z)\rvert \leq 1$ for all $z
\in \C$ with $\lvert \arg (-z) \rvert \leq \theta$; see, e.g.,
\cite{DdBor}.  A definition of $A(\theta)$-stability that requires, in
addition, invertibility of $\id - z \a$ was introduced by Lubich and
Ostermann \cite{LubOst93} in the context of parabolic equations; their
results also depend, to a large extent, on the invertibility of $\a$.
Thus, our assumptions can be described as $A(\theta)$-stability for
$\theta=\pi/2$ in the sense of \cite{LubOst93}.  Note that the
requirement $\theta=\pi/2$ arises as we include operators $A$ which
are not necessarily sectorial, \mybf{ but whose spectrum may, for
example, contain a strip about the imaginary axis, cf.\
Sections~\ref{ss.swe} and~\ref{ss.nse}.}
\end{remark}
 
\begin{example}
The implicit midpoint rule has stability function $\mS(z) = (1+
z/2)/(1-z/2)$, $s=1$, $\a_{11}= \frac{1}{2}$, and $\b_1 = 1$.
Conditions (RK1) and (RK2) are readily verified; moreover, $\a$ is
invertible.
\end{example} 

\begin{lemma} \label{l.glrk} Gauss--Legendre Runge--Kutta methods
satisfy \textup{(RK1)} and \textup{(RK2)} with $\a$ invertible.
\end{lemma}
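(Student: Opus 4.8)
The plan is to reduce the statement to classical facts about the Gauss--Legendre collocation methods and their stability functions. Recall that the $s$-stage Gauss--Legendre method is the collocation method whose nodes $c_1,\dots,c_s$ are the zeros of the shifted Legendre polynomial of degree $s$ on $[0,1]$; these are real, simple, and satisfy $0<c_i<1$, so in particular they are pairwise distinct and nonzero. Being a collocation method with $s$ nodes, it satisfies the simplifying condition $C(s)$, i.e., $\sum_{j=1}^s \a_{ij}\,c_j^{q-1}=c_i^q/q$ for $q=1,\dots,s$ and all $i$. I would first extract invertibility of $\a$ from this: letting $V=(c_j^{q-1})_{j,q}$ denote the Vandermonde matrix of the nodes, which is nonsingular since the $c_i$ are distinct, the condition $C(s)$ reads $\a V=\operatorname{diag}(c_1,\dots,c_s)\,V\,\operatorname{diag}(1,\tfrac12,\dots,\tfrac1s)$; the two diagonal factors are invertible (here one uses $c_i\neq0$), hence so is $\a$.

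For (RK1), I would invoke the classical identification of the stability function $\mS$ of the $s$-stage Gauss--Legendre method, defined in \eqref{e.SzRK}, with the diagonal Pad\'e approximant $R_{ss}$ to $\e^z$: this follows because the method has order $2s$, because $\mS$ is a rational function of type $(s,s)$ by \eqref{e.SzRK}, and by uniqueness of Pad\'e approximants. Combined with the classical $A$-acceptability of the diagonal Pad\'e approximants, namely $\lvert R_{ss}(z)\rvert\leq1$ for $\Re z\leq0$, this yields (RK1); in particular $\mS$ is bounded on $\C^-$.

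For (RK2), by Remark~\ref{r.eigenvalues} it suffices to show that $\a$ has no eigenvalue in $\C^-\setminus\{0\}$; since $\a$ is invertible, $0$ is not an eigenvalue, so it remains to see that every eigenvalue $\lambda$ of $\a$ satisfies $\Re\lambda>0$. Applying the matrix determinant lemma to \eqref{e.SzRK} gives $\mS(z)=\det(\id-z\a+z\,\1\,\b^T)/\det(\id-z\a)$, so $\det(\id-z\a)$ is a polynomial, of degree exactly $s$ because $\a$ is invertible, with value $1$ at $z=0$, and it serves as a denominator for the rational function $\mS=R_{ss}$. Since $R_{ss}=P_{ss}/Q_{ss}$ in lowest terms with both factors of degree $s$ and $Q_{ss}(0)=1$, this forces $\det(\id-z\a)=Q_{ss}(z)$. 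The zeros of $Q_{ss}$ are exactly the poles of $\mS$, hence lie in $\{\Re z>0\}$ by (RK1); on the other hand they are precisely the numbers $1/\lambda$ with $\lambda$ a (necessarily nonzero) eigenvalue of $\a$. Since $\Re(1/\lambda)$ and $\Re\lambda$ have the same sign for $\lambda\neq0$, every eigenvalue of $\a$ lies in $\{\Re z>0\}$, as required.

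The genuine input here is (RK1): the $A$-acceptability of the diagonal Pad\'e approximants to $\e^z$ is itself a nontrivial classical theorem, which I would cite rather than reprove. Granting that, the remaining steps are routine bookkeeping; the only mildly delicate point is the passage from ``the poles of $\mS$ lie in the right half-plane'' to ``the eigenvalues of $\a$ lie in the right half-plane,'' which hinges on the normalization identifying $\det(\id-z\a)$ with the Pad\'e denominator $Q_{ss}$ and on $\a$ being invertible so that this polynomial actually has degree $s$.
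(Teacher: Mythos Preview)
Your argument is correct and follows the same overall strategy as the paper: cite $A$-stability of Gauss--Legendre methods for (RK1), then identify $\det(\id-z\a)$ with the denominator of the stability function so that (RK1) forces the zeros of $\det(\id-z\a)$, hence the reciprocals of the eigenvalues of $\a$, into the open right half-plane.

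The main difference is the treatment of invertibility of $\a$ and the order of the deductions. You prove $\a$ invertible first, directly from the simplifying condition $C(s)$ via the Vandermonde identity $\a V=\operatorname{diag}(c_i)\,V\,\operatorname{diag}(1/q)$, using only that the Gauss nodes are distinct and nonzero; you then use this to conclude that $\det(\id-z\a)$ has degree exactly $s$ and so must coincide with the normalized Pad\'e denominator $Q_{ss}$. The paper reverses this: it argues via Cramer's rule that $\det(\id-z\a)$ is a denominator for $\mS$, then uses the order relation $p=2s\leq\deg\mP+\deg\mQ$ together with $\deg\mP,\deg\mQ\leq s$ to force $\deg\mQ=s$, and reads off both (RK2) and invertibility of $\a$ as consequences. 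Your route is a little more hands-on but has the merit of isolating a clean, purely algebraic reason for $\a$ to be invertible, independent of the order-$2s$ property; the paper's route is more economical in that invertibility drops out of the same degree count needed anyway. Either way the ``delicate point'' you flag---matching $\det(\id-z\a)$ with $Q_{ss}$ via a degree-and-normalization argument---is exactly the crux in both proofs.
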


\begin{proof}
Condition (RK1) is the classical notion of A-stability; it is proved
for Gauss--Legendre methods in \cite[Theorem~6.44]{DdBor}, for
example.

To verify condition (RK2), write $\mS(z) = \mP(z)/\mQ(z)$ as the
quotient of polynomials $\mP$ and $\mQ$ with no common roots.  We claim
that $\mQ(z) = \det (\id -z \a)$.  To see this, note first that $\det
(\id -z \a)$ arises naturally as the common denominator when solving
for the terms of an explicit rational expansion of $(\id - z \a)^{-1}$
by Cramer's rule; see the proof of \cite[Lemma~6.30]{DdBor}.  The
claim follows if we can show that the numerator does not have any
factor in common with $\det (\id -z \a)$.  Since $p=2s$ for
Gauss--Legendre methods \cite[Theorem~6.43]{DdBor}, $\deg \mQ \leq s$
and $\deg \mP \leq s$ for $s$-stage implicit Runge--Kutta methods
\cite[Lemma~6.30]{DdBor} and, generally, $p \leq \deg \mP + \deg \mQ$
\cite[Lemma~6.4]{DdBor}, we conclude that $\deg \mP=\deg \mQ=s$ so
that indeed $\mQ(z) = \det (\id -z \a)$.

Since, by (RK1), the rational function $\mS$ is nonsingular on $\C^-$,
all eigenvalues of $\a$ must lie outside of $\C^- \setminus \{0\}$.
This proves invertibility of $\id - z\a$ on $\C^-$, cf.\
Remark~\ref{r.eigenvalues}.  Finally, since $\mQ(z) = \det (\id - z
\a)$ has degree $s$, $\a$ must also be nonsingular.
\end{proof}

For the convergence analysis in Section~\ref{ss.convergence}, we need
the following additional assumption on the operator $A$ and on the
scheme.
\begin{itemize}
\item[(A1)] Assumption (A0) holds, and there exist constants
\mybf{$\omega_\mS, \Theta_\mS, h_*>0$} such that for all $h \in
[0,h_*]$ and $\mybf{n\in \N}$,
\begin{equation}
  \mybf{\norm{\mS^n(hA)}{\kY \to \kY}
  \leq \Theta_\mS \, \e^{\omega_\mS n h} \,.}
  \label{e.ShaEstimate} 
  \end{equation}
\end{itemize}
\mybf{If assumption (A1) holds,  we define, for $U \in \kY$,
\begin{equation}
  \label{e.h-depNorm}
  \norm{U}{\kY,h} \equiv \sup_{n\in \N_0} 
  \e^{-n\omega_\mS h} \, \norm{\mS^n(hA) U}{\kY} \,.
\end{equation}
Then $\tnorm{\,\cdot\,}{\kY,h}$ is equivalent to the $\kY$-norm in the
sense of \eqref{e.h-depNorms} with $\mybf{\Theta_*}=\Theta_\mS$.  Moreover,
there is some $\sigma>0$ such that
\begin{equation}
  \label{e.h-depNorm-II}
  \norm{\mS(hA)}{\kE(\kY),h} \leq \e^{\omega_\mS h} 
  \leq 1 + \sigma \, h
\end{equation}
for $h\in [0,h_*]$.}
 
\begin{remark} \label{rem:d.orderWrong} 
When an $A$-stable Runge--Kutta is applied to discretize a general
$C_0$-semigroup $\e^{tA}$ on a Banach space, estimate
\eqref{e.ShaEstimate} is in general false. 
A counterexample  
 is the
implicit midpoint rule applied to $A=\partial_x$ on $\kL_1(\R)$
\cite{HershKato}.  When $A$ is a sectorial operator, then
\eqref{e.ShaEstimate} is satisfied \cite{Palencia}. 
\end{remark}
 
\begin{remark}
In the time-continuous case discussed in Section~\ref{s.pde}, the
estimate corresponding to \eqref{e.ShaEstimate} \mybf{is
\eqref{e.etAEstimate}.  Note that,} by replacing the $\kY$-norm with
the equivalent norm $\lVert U \rVert = \sup_{t\geq 0}
\mybf{\e^{-\omega t}} \, \lVert \e^{tA} U \rVert_{\kY}$ the constant
$\Theta$ in \eqref{e.etAEstimate} becomes $1$, analogous to
\eqref{e.h-depNorm-II}.
\end{remark}

We state the following sufficient condition for (A1), which is often
satisfied in applications.

\begin{itemize}
\item[(A2)] Assumption (A0) holds, $\kY$ is a Hilbert space, and
\mybf{$A = A_\rn + A_\rb$ with $A_\rn$ normal and $A_\rb$ bounded as a
linear operator on each $\kY_0, \dots, \kY_K$.}
\end{itemize}
(Recall that an operator $A$ is normal if it is closed and $AA^* = A^*
A$.)  Condition (A2) implies that the non-normal part \mybf{$ A_\rb$}
of $A$ can be included with $B$ as it satisfies the sufficient
condition (B1).  Note that $ A_\rb$ is a bounded linear operator on
each $\kY_k$ if, for example, $A_\rb = \P A$ and $A_\rn = \Q A$ is
normal, where $\P$ is a spectral projector of $A$ onto a finite
dimensional subspace and $\Q = \id - \P$.
 
\begin{remark} \label{r.A2} In the case of the semilinear wave
equation, see Section~\ref{ss.swe}, assumption (A2) is satisfied with
\mybf{$A_\rb = \P_0A$, where $\P_0$ denotes} the spectral projection
corresponding to the eigenvalue $0$ of $A$.  In the case of the
nonlinear Schr\"odinger equation, see Section~\ref{ss.nse}, the
operator $A$ is normal, so that (A2) holds trivially.
\end{remark}

\begin{lemma} \label{l.RK.Lambda2} 
Assume that \textup{(RK1)} and \textup{(RK2)} hold and that $A$
satisfies conditions \textup{(A2)}.  Then \eqref{e.ShaEstimate} is
satisfied \mybf{with $\Theta_\mS=1$}.
\end{lemma}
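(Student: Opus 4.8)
The plan is to use (A2) to reduce to a normal linear operator and then combine the spectral theorem with the structural properties of the stability function forced by (RK1) and (RK2). First, under (A2) the Runge--Kutta iteration \eqref{e.as-rk} for $\partial_t U = AU + B(U)$ coincides, term by term, with the one for $\partial_t U = A_\rn U + \tilde B(U)$, where $\tilde B := A_\rb + B$ again satisfies (B1) since $A_\rb$ is a bounded linear operator on each $\kY_k$; the stability function entering the update formula \eqref{eq:NewRKUpdate} is then $\mS(hA_\rn)$. Hence it suffices to prove \eqref{e.ShaEstimate} when $A$ is itself normal. By (A0), $\e^{tA}$ is bounded and $\Re(\spec A) \le \omega$ for some $\omega \in \R$, so $\spec(hA) = h\,\spec(A)$ lies in the half-plane $\{\,\Re z \le h\,\omega^+\,\}$ with $\omega^+ := \max(\omega,0)$; and one checks that $\id - h\a A$ is invertible on $\kY^s$ for $h$ small (from $\Re(\spec A)\le\omega$ together with the location of $\spec(\a)$ established below), so that $\mS(hA)$ is genuinely the rational function $\mS$ evaluated at the normal operator $hA$.

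Next I would collect the relevant properties of $\mS$. Writing $\mS = \mP/\mQ$ in lowest terms, Cramer's rule shows $\mQ \mid \det(\id - z\a)$, so the poles of $\mS$ lie among the reciprocals $1/\mu$ of the nonzero eigenvalues $\mu$ of $\a$; by (RK2) and Remark~\ref{r.eigenvalues}, $\a$ has no eigenvalue in $\C^-\setminus\{0\}$, whence $\Re\mu > 0$ and $\Re(1/\mu) = \Re\mu/\lvert\mu\rvert^2 > 0$ for each such $\mu$. Thus $\mS$ is holomorphic on a half-plane $\{\,\Re z < 2\delta_0\,\}$ for some $\delta_0 > 0$. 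Moreover (RK1) forces $\deg\mP \le \deg\mQ$ --- otherwise $\lvert\mS(z)\rvert \to \infty$ along the negative real axis, contradicting $\lvert\mS\rvert \le 1$ on $\C^-$ --- so $\mS$ and $\mS'$ are bounded on $\{\,\Re z \le \delta_0\,\}$, say $\lvert\mS'\rvert \le C$ there.

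It then remains to control $\mS$ just to the right of the imaginary axis. For $z = x + \i y$ with $0 \le x \le \delta_0$, integrating $\mS'$ along the horizontal segment from $\i y$ to $z$ and using $\lvert\mS(\i y)\rvert \le 1$ gives $\lvert\mS(z)\rvert \le 1 + Cx$. Choosing $h_* > 0$ with $h_*\,\omega^+ \le \delta_0$, we have $\spec(hA) \subset \{\,\Re z \le \delta_0\,\}$ for all $h \in [0,h_*]$, so the spectral theorem for the normal operator $hA$ yields
\[
  \norm{\mS(hA)}{\kY \to \kY}
  = \sup_{\zeta \in \spec(hA)} \lvert \mS(\zeta) \rvert
  \le 1 + C\,\omega^+\,h
  \le \e^{\omega_\mS h}
\]
with $\omega_\mS := C\,\omega^+$. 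Since $\mS(hA)$ is normal, $\norm{\mS^n(hA)}{\kY\to\kY} = \norm[n]{\mS(hA)}{\kY\to\kY} \le \e^{\omega_\mS n h}$, which is \eqref{e.ShaEstimate} with $\Theta_\mS = 1$; when $\omega \le 0$ the bound $\norm{\mS(hA)}{\kY\to\kY} \le 1$ is immediate from (RK1), so $\omega_\mS = 0$ works.

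I expect the main obstacle to be precisely this last step: (A0) only controls $\Re(\spec A)$ by a possibly positive $\omega$, and one must argue that this shifts $\mS$ on the spectrum by no more than $O(h)$. This is where both (RK2) --- keeping the poles of $\mS$ at a positive distance from $\C^-$, so that $\mS$ is holomorphic across the imaginary axis --- and (RK1) --- bounding $\mS$ at infinity, hence $\mS'$ near the imaginary axis --- are essential, together with normality so that taking powers does not inflate the constant $\Theta_\mS$ beyond $1$.
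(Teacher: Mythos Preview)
Your reduction to normal $A$ does not prove the lemma. The lemma asserts the bound \eqref{e.ShaEstimate} on $\mS^n(hA)$ for the \emph{original} operator $A$; rewriting the semilinear problem with linear part $A_\rn$ and nonlinearity $\tilde B = A_\rb + B$ indeed makes $\mS(hA_\rn)$ appear in the update formula \eqref{eq:NewRKUpdate}, but $\mS(hA_\rn)$ and $\mS(hA)$ are different operators, and the lemma is a statement about the latter. Nothing in your argument bounds $\lVert\mS^n(hA)\rVert$ when $A_\rb \ne 0$.

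The paper exploits the same ``two formulations coincide'' observation, but draws from it an operator identity rather than a reduction. It writes the purely linear problem $\partial_t U = AU$ once with linear part $A$ and $B\equiv 0$, and once with linear part $A_1 = A_\rn - \omega$ and $B(U) = A_2 U$ where $A_2 = A_\rb + \omega$ is bounded. Equating the two numerical time-$h$ maps yields
\[
  \mS(hA) = \mS(hA_1) + h\,\b^T(\id - h\a A_1)^{-1} A_2 W \,,
\]
with $W$ the stage vector of the second formulation; a contraction argument then shows the second term is $O(h)$ in $\kE(\kY)$. For the first term, $A_1$ is normal with $\spec A_1 \subset \C^-$, so (RK1) and the spectral theorem give $\lVert\mS(hA_1)\rVert \le 1$ directly --- no Lipschitz estimate on $\mS$ across the imaginary axis is needed, because the shift by $\omega$ has already placed the spectrum in $\C^-$. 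Altogether $\lVert\mS(hA)\rVert \le 1 + \sigma h$, whence $\Theta_\mS = 1$ by submultiplicativity. Your complex-analysis treatment of the normal case is correct and would serve as an alternative to the shift-by-$\omega$ trick, but you still need an argument linking $\mS(hA)$ to $\mS(hA_\rn)$ (or to $\mS(hA_1)$) in operator norm with an $O(h)$ error, and that is precisely the step you have skipped.
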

 
Before we can prove Lemma~\ref{l.RK.Lambda2}, we need some technical
estimates on the operators which appear on the right of 
Eq.~\eqref{eq:NewRKUpdate} and Eq.~\eqref{eq:newRKStagesIter}.  In the
following, we denote $s$ copies of $\kY$ by $\kY^s$ and use the norm
\[
  \norm{W}{\kY^s} = \max_{j=1,\dots,s} \norm{W^j}{\kY} \,.
\]

\begin{lemma} \label{l.RK.Lambda} 
Assume \textup{(RK2)} and \textup{(A0)}.  Then, for $h_*>0$ small
enough, there exist $\Lambda \geq 1$ and $c_\mS \geq 1$ such that
\begin{subequations}
\label{e.RKShA.1}
\begin{gather}
  \norm{(\id - h\a A)^{-1}}{\kY^s \to \kY^s} \leq \Lambda 
  \label{e.RKShA.a} 
\intertext{and}
  \norm{h \a A (\id - h \a A)^{-1}}{\kY^s \to \kY^s} \leq 1 + \Lambda 
  \label{e.RKShA.b} 
\end{gather}
\end{subequations}
for all $h \in [0,h_*]$.  Moreover, for any $\ell, n, \in \N_0$,
\begin{subequations}
\begin{gather}
  (W, h) \mapsto (\id - h \a A)^{-1} W
  \text{ is a map of class }
  \kCb^{(\underline{n},\ell)}(\kY^s_\ell\times[0,h_*]; \kY^s)
  \,, \label{e.RKShA.A} \\
  (W, h) \mapsto h \a A (\id - h \a A)^{-1} W
  \text{ is a map of class }
  \kCb^{(\underline{n},\ell)}(\kY^s_\ell\times[0,h_*]; \kY^s)
  \,, \label{e.RKShA.B} \\
  \intertext{and}
  (W,h) \to h (\id-h\a A)^{-1}W
  \text{ is a map of class }
  \kCb^{(\underline{n},\ell+1)}(\kY^s_\ell \times [0,h_*];\kY^s)
   \,. \label{e.RKShA.B2}  
\end{gather}
  \label{e.RKShA.2}
\end{subequations}
\end{lemma}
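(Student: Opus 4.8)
The plan is to first establish the two uniform operator bounds \eqref{e.RKShA.1} and then deduce the regularity statements \eqref{e.RKShA.2} from them together with the explicit form of the $h$-derivatives. By Remark~\ref{r.eigenvalues}, condition (RK2) forces every eigenvalue of $\a$ either to vanish (and then to be semisimple, since $(\id-z\a)^{-1}$ is bounded on all of $\C^-$) or to lie in the open right half-plane. Putting $\a$ in Jordan form $\a = TJT^{-1}$ and using $(\id - h\a A)^{-1} = (T\otimes\id)(\id - hJ\otimes A)^{-1}(T^{-1}\otimes\id)$, it suffices to bound $(\id - hJ\otimes A)^{-1}$ block by block. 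On a semisimple zero block it is the identity. On a Jordan block of size $r$ with eigenvalue $\mu$, $\Re\mu>0$, I would factor $\id - hJ_r(\mu)\otimes A = (I_r\otimes(\id - h\mu A))(\id - hN_r\otimes A(\id - h\mu A)^{-1})$ with $N_r$ the nilpotent shift; writing $\id - h\mu A = h\mu(\lambda - A)$ with $\lambda = (h\mu)^{-1}$ and noting $\Re\lambda = \Re\mu/(h\lvert\mu\rvert^2) > \omega$ once $h_*$ is small (for $h=0$ the block operator is the identity), the resolvent estimate \eqref{e.ResolventEst} controls $(\id - h\mu A)^{-1}$ uniformly in $h\in[0,h_*]$; combined with \eqref{e.inv-ident}, which gives $h\mu A(\id - h\mu A)^{-1} = (\id - h\mu A)^{-1} - \id$, this also bounds $hN_r\otimes A(\id - h\mu A)^{-1}$ uniformly in $h$, and since its $r$-th power vanishes the Neumann series terminates and stays bounded. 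Transforming back yields \eqref{e.RKShA.a} with a constant $\Lambda\ge1$ depending only on $\a$ and on $\Theta,\omega$ from \eqref{e.etAEstimate}, and \eqref{e.RKShA.b} is then immediate from \eqref{e.inv-ident}, $h\a A(\id - h\a A)^{-1} = (\id - h\a A)^{-1} - \id$.

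For \eqref{e.RKShA.2}, the observation is that all three maps are linear in $W$, so their second and higher $W$-derivatives vanish and the first $W$-derivative is the operator itself; the whole content lies in the $h$-dependence. Differentiating under the resolvent, $\partial_h^j(\id - h\a A)^{-1} = j!\,(\a A)^j(\id - h\a A)^{-(j+1)}$. Here $(\id - h\a A)^{-(j+1)}$ commutes with $A$, hence is bounded on each $\kY_k^s$ uniformly in $h\in[0,h_*]$ — with the same bound on every rung, by the choice of norm \eqref{e.normYk} — while $(\a A)^j\colon\kY_j^s\to\kY^s$ is bounded because $\lVert A\rVert_{\kY_{\ell+1}\to\kY_\ell}\le 1$ by \eqref{e.normA}. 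Thus $\partial_h^j(\id - h\a A)^{-1}$ is bounded $\kY_\ell^s\to\kY^s$ uniformly in $h$ exactly when $j\le\ell$, which is \eqref{e.RKShA.A}; the key point is that each $h$-differentiation uses up one power of $A$, i.e. one rung of the scale (and the constant here is the $c_\mS$ of the statement, with the same dependence as $\Lambda$). Statement \eqref{e.RKShA.B} follows by rewriting $h\a A(\id - h\a A)^{-1} = (\id - h\a A)^{-1} - \id$, placing it in the class of $(\id - h\a A)^{-1}$ less the identity map. For \eqref{e.RKShA.B2} I would use Leibniz, $\partial_h^j(h(\id - h\a A)^{-1}) = h\,\partial_h^j(\id - h\a A)^{-1} + j\,\partial_h^{j-1}(\id - h\a A)^{-1}$: in the first term the spare factor $h$, absorbed into $h\a A(\id - h\a A)^{-1}$ (bounded on every rung, by \eqref{e.inv-ident}), and in the second the lower differentiation order each save one power of $A$, so the expression is bounded $\kY_\ell^s\to\kY^s$ for $j\le\ell+1$, as claimed.

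It remains to handle continuity, and this is where one must use the classes $\kCb^{(\underline n,\cdot)}$ with the \emph{directional} first index: the families $(\id - h\a A)^{-1}$ and their $h$-derivatives are only strongly continuous in $h$, \emph{not} continuous in the operator norm. Strong continuity on $\kY_\ell^s$ — including at $h=0$, where $(\id - h\a A)^{-1}\to\id$ strongly — follows from the uniform bound just obtained together with the estimate $(\id - h\a A)^{-1} - (\id - h'\a A)^{-1} = (h-h')(\id - h\a A)^{-1}\a A(\id - h'\a A)^{-1}$ tested on the dense subspace $\kY_{\ell+1}^s$; products of uniformly bounded strongly continuous families are strongly continuous, and joint continuity of the maps \eqref{e.derivativemaps} then follows by the usual argument for such families. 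Since $[0,h_*]$ already contains its endpoints the boundary-extension requirement is vacuous, and the uniform operator bounds show that all three maps send bounded sets to bounded sets, completing \eqref{e.RKShA.2}. I expect the main obstacle to be the first step: \eqref{e.RKShA.a} is the only estimate that is not essentially automatic, and accommodating a non-diagonalizable $\a$ — the terminating Neumann series for the nilpotent part, controlled through \eqref{e.inv-ident} and the resolvent estimate — is the technically delicate point; everything afterwards reduces to careful bookkeeping of which rung of the scale each operator lives on.
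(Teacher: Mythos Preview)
Your approach is essentially the paper's: Jordan form of $\a$ plus the Hille--Yosida resolvent estimate \eqref{e.ResolventEst} for \eqref{e.RKShA.a}, the identity \eqref{e.inv-ident} for \eqref{e.RKShA.b}, the explicit formula $\partial_h^\ell(\id-h\a A)^{-1}=\ell!\,(\a A)^\ell(\id-h\a A)^{-(\ell+1)}$ for \eqref{e.RKShA.A}--\eqref{e.RKShA.B}, and a density argument for strong continuity in $h$. Two minor differences are worth noting. First, for \eqref{e.RKShA.B2} the paper uses the cleaner identity $\partial_h\bigl[h(\id-h\a A)^{-1}\bigr]=(\id-h\a A)^{-2}$, which gives $\partial_h^\ell\bigl[h(\id-h\a A)^{-1}\bigr]=\ell!\,(\a A)^{\ell-1}(\id-h\a A)^{-(\ell+1)}$ directly, avoiding your Leibniz split; both arguments are correct. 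Second, your claim that a zero eigenvalue of $\a$ must be semisimple ``since $(\id-z\a)^{-1}$ is bounded on all of $\C^-$'' is circular: (RK2) asserts only invertibility, not uniform boundedness, and a nilpotent Jordan block at zero (e.g.\ $\a=\bigl(\begin{smallmatrix}0&1\\0&0\end{smallmatrix}\bigr)$) satisfies (RK2) yet has $(\id-z\a)^{-1}$ unbounded on $\C^-$; in that case $(\id-h\a A)^{-1}$ is genuinely unbounded on $\kY^s$. The paper does not address this either --- it simply writes ``$\Re\lambda>0$ due to assumption (RK2) and Remark~\ref{r.eigenvalues}'' --- so this is a shared lacuna rather than a defect in your argument relative to the paper's; in the applications (Gauss--Legendre methods, Lemma~\ref{l.glrk}) $\a$ is invertible and the issue does not arise.
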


\begin{remark}
\mybf{Estimates of the form \eqref{e.RKShA.1} were proved in
\cite{LubOst93} under the assumption that $A$ is sectorial.}
\end{remark}

\begin{proof}
Transforming $\a$ into Jordan normal form, we see that there exists a
constant $c=c(\a)$ such that
\[
  \norm{(\id - h \a A)^{-1}}{\kY^s \to \kY^s}
  \leq c \, \max_{i=1,\dots,k} 
       \norm[m_i]{(\id - h \lambda_i A)^{-1}}{\kY \to \kY}
\]
where $\lambda_1,\dots,\lambda_k$ are the eigenvalues of $\a$ with
algebraic multiplicities $m_1,\dots,m_k$.  

Hence, let $\lambda$ be one of the eigenvalues of $\a$; we know that
$\Re \lambda > 0$ due to assumption (RK2) and
Remark~\ref{r.eigenvalues}.  Referring to \eqref{e.ResolventEst}, we
estimate, for $h \geq 0$,
\[
  \norm{(\id - h \lambda A)^{-1}}{\kY \to \kY}
  \leq \frac1{\lvert h\lambda \rvert} \,
       \frac\Theta{\Re \frac{1}{h\lambda} - \omega}
  =    \frac\Theta{\frac{\lvert \lambda \rvert}{\Re\lambda} - 
                   \lvert h \lambda \rvert \, \omega} \,.
\]
Thus, the right hand bound is positive and finite for all $h \in
[0,h_*]$ provided that $h_*>0$ is small enough.  This proves estimate
\eqref{e.RKShA.a}.  Due to identity \eqref{e.inv-ident}, estimate
\eqref{e.RKShA.b} follows immediately. 

To prove continuity of the map $(\id - h \a A)^{-1}W \colon [0,h_*]
\to W$ for fixed $W \in \kY^s$, we proceed as follows.  Let
$\varepsilon > 0$.  Then for every $W_1 \in \kY_1^s$, $h,h' \in [0,h_*]$,
\begin{align}
  & \norm{(\id - h \a A)^{-1} W - (\id - h' \a A)^{-1} W}{\kY^s} 
    \notag \\
  & \quad 
    \leq \norm{((\id - h\a A)^{-1} - (\id - h'\a A)^{-1}) W_1}{\kY^s}
       + \norm{(\id - h \a A)^{-1} (W-W_1)}{\kY^s} 
    \notag\\
  & \quad \quad 
       + \norm{(\id - h' \a A)^{-1} (W-W_1)}{\kY^s} \notag \\
  & \quad
    \leq \norm{((\id - h \a A)^{-1} - (\id - h' \a A)^{-1})  W_1}{\kY^s}
         + 2\Lambda \, \norm{W-W_1}{\kY} \,,
  \label{e.LambdaC0}
\end{align}
where the second inequality is based on \eqref{e.RKShA.a}.  Now, since
$A$ is assumed to be densely defined and $\kY_1 = D(A)$, we can choose
$W_1$ so close to $W$ that the last term on the right is less than
$\varepsilon/2$.  Then, since $W_1 \in \kY_1^s$, there exists a
$\delta = \delta(W_1)$ such that the first term on the right is less
than $\varepsilon/2$ whenever $\lvert h - h' \rvert < \delta$.  This
proves continuity of $h \mapsto (\id - h \a A)^{-1}W$ on the interval
$[0,h_*]$.   

To complete the proof of \eqref{e.RKShA.2}, we must compute the
$h$-derivatives of the map \eqref{e.RKShA.A}.  Once we have shown
\eqref{e.RKShA.A}, estimate \eqref{e.RKShA.B} follows immediately via
\eqref{e.inv-ident}.  First,
\[
  \partial_h^\ell (\id - h \a A)^{-1} 
  = \ell! \, (\a A)^\ell \, (\id - h \a A)^{-\ell-1} \,.
\]
Using estimates \eqref{e.normA} and \eqref{e.RKShA.a}, and noting the
continuity of $h \mapsto (\id - h \a A)^{-1}W$ proved above,
\eqref{e.RKShA.A} follows.  Finally, noting that
\[
  \partial_h [ h (\id - h \a A)^{-1}] 
  = (\id - h \a A)^{-1} + h\a A(\id - h \a A)^{-2} 
  = (\id - h \a A)^{-2} \,,
\]
we obtain
\[
  \partial_h^\ell [ h (\id - h \a A)^{-1}] = 
  \partial_h^{\ell-1} (\id - h \a A)^{-2} =  
  \ell! \, (\a A)^{\ell-1} \, (\id - h \a A)^{-\ell-1} \,,
\]
which implies \eqref{e.RKShA.B2}.
\end{proof}

\begin{proof}[Proof of Lemma~\ref{l.RK.Lambda2}]
Recall that $\Re (\spec A) \leq \omega$ for some $\omega >0$ so that
the spectrum of $A-\omega$ is contained in $\C^-$.  Moreover, by
assumption (A2) we can split $A$ into a normal part \mybf{$A_\rn$} and a
bounded part \mybf{$A_\rb$}.  Now decompose $A = A_1 + A_2$ with $A_1 =
\mybf{A_\rn} - \omega$ and $A_2 = \mybf{A_\rb} +\omega$.  We now apply
the Runge--Kutta scheme to the linear problem $\partial_tU = AU$ in
two different ways.  First, we take the full $A$ and $B\equiv 0$;
second we take $A$ replaced by $A_1$ and $B(U)=A_2 U$.  Since the
respective numerical time-$h$ maps given by \eqref{eq:NewRKUpdate}
must be the same, we obtain the identity
\begin{subequations}
  \label{e.sha}
\begin{gather}
  \mS(hA) = \mS(h A_1) + h {\b}^T \, (\id - h \a A_1)^{-1} \, A_2 W
  \label{e.sha.a}
\intertext{where} 
   W = (\id - h  {\a} {A_1} )^{-1} \, (\1 + h \a A_2 W) \,.
  \label{e.sha.b}
\end{gather}
\end{subequations}
Rewrite \eqref{e.sha.b} as $W= MW + G$ with
\[
  M = (\id - h  {\a} {A_1} )^{-1} \,  h  {\a}  A_2 
  \qquad \text{and} \qquad 
  G = (\id - h  {\a} {A_1} )^{-1} \, \1 \,.
\] 
Since $A_2$ is bounded and, by Lemma~\ref{l.RK.Lambda}, $(\id - h {\a}
{A_1} )^{-1}$ is uniformly bounded for $h \in [0,h_*]$, the matrix $M$
has norm smaller than one for $h \in [0,h_*]$ with some possibly
smaller $h_*>0$.  Consequently, we can solve for $W= (\id -M)^{-1}G$,
whence the second term on the right of \eqref{e.sha.a} is $O(h)$ in
the norm of $\kE(\kY)$.

As $A_1$ is normal with $\spec A_1 \subset \C^-$, we have, referring
to (RK1),
\[
  \norm{\mS(h A_1)}{\kY \to \kY} \leq 
  \sup_{\lambda \in \spec A_1} \lvert \mS(h \, \lambda) \rvert
  \leq 1 \,.
\]
Altogether, this proves \mybf{that there exists $\sigma>0$ such that
$\norm{\mS(h A)}{\kY \to \kY} \leq 1+ \sigma \, h$ for $h\in [0,h_*]$.
This in turn implies \eqref{e.ShaEstimate} with $\Theta_\mS=1$.}
\end{proof}

\mybf{Next, we describe the differentiability properties of $\mS(hA)$
which will be needed later on.  }
 
\begin{lemma} \label{l.RK.ShA} 
Assume \textup{(RK2)}, \textup{(A0)}, and either that the Runge--Kutta
matrix $\a$ is invertible or that \textup{(A1)} holds.  Then there
exist $h_*>0$ and $c_\mS \geq 1$ such that for all $h \in [0,h_*]$,
\begin{equation}
  \norm{\mS(hA)}{\kY \to \kY} \leq c_\mS 
  \label{e.RKShA.c} 
\end{equation}  
and, for all $\ell,n \in \N_0$, 
\begin{equation}
  (U, h) \mapsto \mS(hA) U
  \text{ is a map of class }
  \kCb^{(\underline{n},\ell)}(\kY_\ell\times[0,h_*]; \kY) \,.
  \label{e.RKShA.C}
  \end{equation}  
\end{lemma}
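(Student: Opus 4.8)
The plan is to reduce Lemma~\ref{l.RK.ShA} to Lemma~\ref{l.RK.Lambda}. Taking $B\equiv 0$ in \eqref{eq:NewRKUpdate}, so that $W(U,h) = (\id - h\a A)^{-1}\1 U$ — equivalently, substituting $z = hA$ in \eqref{e.SzRK} — one has the operator identity $\mS(hA) = \id + h\,\b^T A\,(\id - h\a A)^{-1}\,\1$ on $D(A)$; since $A$ commutes with $\a$, $\b^T$ and $\1$ on $\kY^s$, this equals $\id + h\,\b^T (\id - h\a A)^{-1}\,\1\,A$, so the ``bare'' factor $A$ may always be made to act first. Lemma~\ref{l.RK.Lambda} controls the remaining operator $h(\id - h\a A)^{-1}$ on the scale, and the two hypotheses of the lemma are needed only to bound $\mS(hA)$ as an operator on $\kY$ itself.

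For \eqref{e.RKShA.c} I would split according to the hypothesis. If $\a$ is invertible, then $hA\,(\id - h\a A)^{-1} = \a^{-1}\bigl((\id - h\a A)^{-1} - \id\bigr)$ by \eqref{e.inv-ident}, so that
\[
  \mS(hA) = \bigl(1 - \b^T\a^{-1}\1\bigr)\,\id + \b^T\a^{-1}\,(\id - h\a A)^{-1}\,\1
\]
with $\b^T\a^{-1}\1 \in \R$, and \eqref{e.RKShA.c} follows from \eqref{e.RKShA.a}. If instead \textup{(A1)} holds, then \eqref{e.RKShA.c} is immediate from \eqref{e.ShaEstimate} with $n = 1$ and $c_\mS = \max(1, \Theta_\mS\,\e^{\omega_\mS h_*})$. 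Either way $\mS(hA)$ extends to a bounded operator on $\kY$ for which the identity above holds on $D(A)$.

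For \eqref{e.RKShA.C} I would invoke \eqref{e.RKShA.B2}, i.e.\ that $(W,h)\mapsto h(\id - h\a A)^{-1}W$ lies in $\kCb^{(\underline n,\ell+1)}(\kY^s_\ell\times[0,h_*];\kY^s)$ for all $n,\ell\in\N_0$. Pre-composing with the bounded linear map $U\mapsto \1 AU$ from $\kY_{\ell+1}$ to $\kY^s_\ell$ (bounded by \eqref{e.normA}) and post-composing with $W\mapsto\b^T W$ from $\kY^s$ to $\kY$, and using that composition with bounded linear maps preserves the class $\kCb^{(\underline n,\ell+1)}$, the identity $\mS(hA)U - U = \b^T\,h(\id - h\a A)^{-1}\1\,AU$ gives
\[
  \mS(hA) - \id \in \kCb^{(\underline n,\ell+1)}(\kY_{\ell+1}\times[0,h_*];\kY)
\]
for all $n,\ell\in\N_0$, hence $\mS(hA)\in\kCb^{(\underline n,m)}(\kY_m\times[0,h_*];\kY)$ for every $m\geq 1$ and $n\geq 0$ (the inclusion $\kY_m\hookrightarrow\kY$ being bounded and linear). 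The remaining case $m = 0$ is \eqref{e.RKShA.c} together with strong continuity of $h\mapsto\mS(hA)$, which follows from \eqref{e.RKShA.c}, the strong continuity of $h\mapsto h(\id - h\a A)^{-1}$ in Lemma~\ref{l.RK.Lambda}, and density of $D(A)$ in $\kY$. This proves \eqref{e.RKShA.C}.

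The step I expect to require the most care is keeping the roles of the two hypotheses straight: the differentiability \eqref{e.RKShA.C} for $\ell\geq 1$ is purely structural and follows from Lemma~\ref{l.RK.Lambda} with no stability input, and it is only the zeroth level — the $h$-uniform bound \eqref{e.RKShA.c} of $\mS(hA)$ as an operator on $\kY$ — that genuinely requires either invertibility of $\a$ (so that \eqref{e.inv-ident} removes the bare $A$) or the global estimate \eqref{e.ShaEstimate}. The familiar subsidiary point is that $h\mapsto\mS(hA)$ is only strongly, not norm, continuous, so in place of a standard parameter-dependent statement one must reuse the density argument from the proof of Lemma~\ref{l.RK.Lambda}.
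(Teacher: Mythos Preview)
Your proof is correct and follows essentially the same route as the paper: the bound \eqref{e.RKShA.c} is obtained either directly from \textup{(A1)} or, when $\a$ is invertible, by rewriting $hA(\id - h\a A)^{-1}$ via \eqref{e.inv-ident} and invoking \eqref{e.RKShA.a}; the differentiability \eqref{e.RKShA.C} for $\ell\geq 1$ comes from \eqref{e.RKShA.B2} after absorbing the bare factor $A$, and the case $\ell=0$ is handled by the same density argument as in Lemma~\ref{l.RK.Lambda}. Your write-up is in fact slightly more explicit than the paper's in spelling out the pre- and post-compositions and the index shift $m=\ell+1$.
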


\begin{proof}
First, \eqref{e.RKShA.c} clearly holds when \textup{(A1)} holds.  To
prove \eqref{e.RKShA.c} when $\a$ is invertible, we estimate, using
\eqref{e.SzRK} and \eqref{e.RKShA.b},
\[
  \norm{\mS(hA)}{\kY \to \kY}
  \leq 1 + s \, \norm{\b}{} \,\norm{\a^{-1}}{} \, (1+\Lambda) 
  \equiv c_\mS \,.
\]
Next, we show that $\mS(hA)U \colon [0,h_*] \to \kY$ is continuous for
every $U \in \kY$ as in the proof of Lemma~\ref{l.RK.Lambda},
replacing $(\id - h \a A)^{-1}$ by $\mS(hA)$, $\Lambda$ by $c_\mS$,
and $\kY^s$ by $\kY$ in \eqref{e.LambdaC0}.  This proves
\eqref{e.RKShA.C} for $\ell=0$.

Finally, to prove \eqref{e.RKShA.C} for $\ell \in \N$, we note that,
due to \eqref{e.RKShA.B2}, the map $(W,h) \mapsto hA(\id-h\a A)^{-1}W$
is of class $\kCb^{(\underline{n},\ell)}(\kY^s_{\ell}\times[0,h_*];
\kY^s)$; the claim then follows directly from the definition of $\mS$
in \eqref{e.SzRK}.
\end{proof}

In Theorem~\ref{t.local-diff}, we studied differentiability in time of
the semiflow $\Phi^t$ of \eqref{e.pde}.  An analogous result holds for
differentiability of the discretization $\Psi^h$ of \eqref{e.pde} in
the step size $h$.

\begin{theorem}[Existence and regularity of numerical method, local version] 
\label{t.h-diff} 
Assume that the semilinear evolution equation \eqref{e.pde} satisfies
conditions \textup{(A0)} and \textup{(B1)}, and apply a Runge--Kutta
method subject to condition \textup{(RK2)} to it.  Moreover, assume
that \textup{(A1)} holds or that the Runge--Kutta matrix $\a$ is
invertible.  Choose $R \in (0,\delta_*]$ such that $\kD^{-R}_K \neq
\emptyset$ and pick $U^0 \in \kD^{-R}_K$.  Let
$R_*=R/(2\max\{c_\mS,\Lambda\})$ with $c_\mS$ from \eqref{e.RKShA.c}
and $\Lambda$ from \eqref{e.RKShA.1}.  Then, for sufficiently small
$h_*>0$, there exists a unique stage vector $W$ and numerical time-$h$
map $\Psi(U,h)=\Psi^h(U)$ which satisfy
\begin{equation}
\label{e.h-diff-gen}
  W^i,\Psi \in \bigcap_{\substack{j+k \leq N \\ \ell \leq k \leq K}}
  \kCb^{(\underline j, \ell)} (\kB_{R_*}^{\kY_K}(U^0) \times [0,h_*]; 
  \kB_{R}^{\kY_{k-\ell}}(U^0))
\end{equation}
for $i = 1, \ldots, s$.  In particular,
\begin{equation} 
  \label{e.h-diff} 
  W^i, \Psi \in\kCb^K (B_{R_*}^{\kY_K}(U^0) \times [0,h_*]; 
  \kB_{R}^{\kY}(U^0)) 
\end{equation}
for $i=1, \dots, s$.  The bounds on $W$, $\Psi$ and $h_*$ depend only
on the bounds afforded by \textup{(B1)}, \eqref{e.RKShA.1},
\eqref{e.RKShA.c}, on the coefficients of the method, $R$, and $U^0$.
If, in addition, \textup{(A1)} holds, then there exists a constant
$\sigma_\Psi$, such that for $h \in [0,h_*]$ with a possibly smaller
choice of $h_*>0$,
\begin{equation}
  \label{e.DPsi}
  \sup_{U \in B_{R_*}^{\kY}(U^0)}
  \norm{\D \Psi^h(U)}{{\mybf{\kE(\kY),h}}} \leq 1 +\sigma_\Psi h \,,
\end{equation}
where the norm on the left is defined by \eqref{e.h-depNorm}
and $h_*$ and $\sigma_\Psi$ depend only on the above quantities and on the
constants in \textup{(A1)}.  
\end{theorem}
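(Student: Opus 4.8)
The plan is to run the same machinery as in the proof of Theorem~\ref{t.local-diff}, with the Duhamel fixed point map replaced by the Runge--Kutta stage map \eqref{eq:newRKStagesIter}. One works on the scale $\kZ_j = \kY_j^s$ and seeks a fixed point of $\Pi(\,\cdot\,;U,h)$ from \eqref{eq:newRKStagesIter} in $\kW_j = \{W\in\kY_j^s \colon \norm{W^i - U^0}{\kY_j}\le R,\ i=1,\dots,s\}$ for $j=0,\dots,K$, with parameter sets $\kU = \interior(\kB^{\kY_K}_{R_*}(U^0)) \subset \kY_K$ and $\kI = (0,h_*)$. Using \eqref{e.inv-ident} one writes $\Pi(W;U,h) - \1 U^0 = (\id - h\a A)^{-1}\1(U-U^0) + h\a A(\id-h\a A)^{-1}\1 U^0 + h(\id-h\a A)^{-1}\a\, B(W)$ and estimates the three summands by \eqref{e.RKShA.a}: the first is bounded by $\Lambda R_* \le R/2$ using \eqref{e.normA} and $R_* = R/(2\max\{c_\mS,\Lambda\})$; the third is $O(h)\,\Lambda\norm{\a}{}M_j$; the middle term tends to $0$ in $\kY_j$ as $h\to 0$ by the strong continuity of $h\mapsto(\id-h\a A)^{-1}\1 U^0$ established inside the proof of Lemma~\ref{l.RK.Lambda}. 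It is here, through strong continuity rather than a decay rate, that smallness is obtained, exactly as for the term $\norm{\e^{\tau TA}U^0 - U^0}{\kY_j}$ in Theorem~\ref{t.local-diff}. Hence for $h_*$ small enough $\Pi$ maps $\kW_j\times\kU\times\kI$ into $\kW_j$. On the lowest rung the same construction applies with $U$ ranging over the full $\kY$-ball $\kB_{R_*}^{\kY}(U^0)$, which lies in $\kD=\kD_0$ since $U^0\in\kD_K^{-R}\subset\kD_0^{-R}$; this will be needed for \eqref{e.DPsi}.

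Next I would verify the hypotheses of Theorem~\ref{t.cm-scale}. For the contraction property, $\D_W\Pi(W;U,h) = h(\id-h\a A)^{-1}\a\,\D B(W)$ has operator norm at most $h\,\Lambda\norm{\a}{}M'_j < 1$ for $h_*$ small, using that $B$ is at least $\kC^1$ on $\kY_K$ because $N>K$ in (B1); this gives the uniform contraction required in condition~(ii). Condition~(i), namely membership of $\Pi$ in the appropriate $\kCb^{(\underline{n},\ell)}$ classes, follows by combining the joint $(W,h)$-regularity of the resolvent maps in \eqref{e.RKShA.A}, \eqref{e.RKShA.B}, \eqref{e.RKShA.B2} with the hypothesis (B1) on $B$ and the chain rule on a scale of Banach spaces, Lemma~\ref{l.cm-composition}. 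Theorem~\ref{t.cm-scale} then yields a unique stage vector $W$ lying in $\bigcap_{j+k\le N,\ \ell\le k\le K}\kCb^{(\underline j,\ell)}(\kB_{R_*}^{\kY_K}(U^0)\times[0,h_*];\kW_{k-\ell})$, which is \eqref{e.h-diff-gen} for the $W^i$.

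For $\Psi$ I would feed $W$ into the update formula \eqref{eq:NewRKUpdate}, $\Psi^h(U) = \mS(hA)U + h\b^T(\id-h\a A)^{-1}B(W(U,h))$. The first summand is in the claimed classes by \eqref{e.RKShA.C} of Lemma~\ref{l.RK.ShA}, and, arguing as for the middle term above but with $\mS(hA)$ in place of $(\id-h\a A)^{-1}\1$ and $c_\mS$ (from \eqref{e.RKShA.c}) in place of $\Lambda$, one checks $\mS(hA)U - U^0 \in \kB_{R/2}^{\kY_{k-\ell}}(0)$ for $h_*$ small — this is where the factor $c_\mS$ in $R_*$ is used. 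The second summand is $O(h)$, and its regularity comes from composing the regular map $W$ with $B$ and with $(\,\cdot\,,h)\mapsto h\b^T(\id-h\a A)^{-1}(\,\cdot\,)$ from \eqref{e.RKShA.B2}, again by Lemma~\ref{l.cm-composition}. This establishes \eqref{e.h-diff-gen} for $\Psi$, and \eqref{e.h-diff} then follows from Lemma~\ref{l.cm-scale}. Tracking constants through all these estimates shows $h_*$ and the bounds depend only on the listed quantities. Finally, for \eqref{e.DPsi} under (A1), differentiate \eqref{eq:NewRKUpdate}: $\D\Psi^h(U) = \mS(hA) + h\b^T(\id-h\a A)^{-1}\D B(W(U,h))\,\D_U W(U,h)$. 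By \eqref{e.h-depNorm-II}, $\norm{\mS(hA)}{\kE(\kY),h}\le\e^{\omega_\mS h}\le 1+\sigma h$; the remaining term is $O(h)$ in $\kE(\kY)$ since $\D_U W = (\id - \D_W\Pi)^{-1}\D_U\Pi$ with $\norm{\D_U\Pi}{}\le\Lambda$ and $\norm{(\id-\D_W\Pi)^{-1}}{}\le 2$ for $h_*$ small, hence $O(h)$ also in $\norm{\,\cdot\,}{\kE(\kY),h}$ by the equivalence \eqref{e.h-depNorms} with $\Theta_* = \Theta_\mS$; adding the two contributions gives $\norm{\D\Psi^h(U)}{\kE(\kY),h}\le 1+\sigma_\Psi h$.

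The main obstacle is not in the individual estimates, which are routine, but in the fact that the operator families $h\mapsto(\id-h\a A)^{-1}$ and $h\mapsto\mS(hA)$ are only strongly, not norm, continuous in $h$, so the classical contraction mapping theorem with parameters does not apply; the argument must route through Theorem~\ref{t.cm-scale} together with the $\kCb^{(\underline{n},\ell)}$-regularity of these families from Lemmas~\ref{l.RK.Lambda} and~\ref{l.RK.ShA}, and must extract smallness of the ``$(\id-h\a A)^{-1}\1 U^0 - \1 U^0$'' and ``$\mS(hA)U^0 - U^0$'' terms from strong continuity alone. The remaining delicacy is the bookkeeping of the nested domains $\kD_k$ and of which intersection of $\kCb^{(\underline j,\ell)}$-classes is available at each rung, together with the coordinated choice $R_* = R/(2\max\{c_\mS,\Lambda\})$ ensuring the self-mapping property simultaneously on all $K+1$ rungs and, separately, on the $\kY$-ball on the lowest rung.
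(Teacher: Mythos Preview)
Your proposal is correct and follows essentially the same approach as the paper's proof: the same contraction mapping framework on the scale $\kZ_j=\kY_j^s$ via Theorem~\ref{t.cm-scale}, the same three-term decomposition of $\Pi(W;U,h)-\1 U^0$ (the paper writes the middle term as $(\id-(\id-h\a A)^{-1})\1 U^0$ rather than invoking \eqref{e.inv-ident} explicitly, but these coincide), the same use of strong continuity from Lemma~\ref{l.RK.Lambda} to control that term, the same contraction estimate, the same treatment of $\Psi$ via \eqref{eq:NewRKUpdate} with Lemma~\ref{l.RK.ShA} and Lemma~\ref{l.cm-composition}, and the same differentiation of \eqref{eq:NewRKUpdate} for \eqref{e.DPsi}. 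Your added remarks on the $\kY$-ball domain for \eqref{e.DPsi} and the explicit bound on $\D_U W$ via $(\id-\D_W\Pi)^{-1}\D_U\Pi$ are slightly more detailed than the paper, which simply cites the $\kCb^{(\underline 1,0)}$ bound on $W$ from \eqref{e.h-diff-gen}.
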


Note that statement \eqref{e.h-diff} for $\Psi$ is analogous to
\eqref{e.T-diff-gen} for the semiflow $\Phi$.
 
\begin{proof}
We apply the contraction mapping theorem on a scale of Banach spaces,
Theorem~\ref{t.cm-scale}, to the map from \eqref{eq:newRKStagesIter},
\[
  \Pi(W; U, h) \equiv (\id - h \a A)^{-1} \, \1 U
  + h \a \, (\id - h \a A)^{-1} \, B(W)
\]
with $u=U$, $w=W$, and $\mu=h$ on the scale $\kZ_j = \kY_j^s$ for
$j=0, \dots K$.  We further identify $\kX=\kY_K$, $\kW_j = \kB_R(\1
U^0) \subset \kY_j^s$, $\kI = (0, h_*)$, and $\kU = \interior
\kB_{R_*}(U^0) \subset \kY_K$.  To verify condition (i) of
Theorem~\ref{t.cm-scale}, we note that Eq.~\eqref{e.RKShA.A} of
Lemma~\ref{l.RK.Lambda} asserts that the map $(U,h) \mapsto (\id - h
\a A)^{-1} \, \1 U$ is, in particular, of class
\[
  \bigcap_{\substack{j+k \leq N \\ \ell \leq k \leq K}}
  \kCb^{(\underline j, \ell)} (\kD_K \times [0,h_*]; \kY_{k-\ell}^s) \,.
\]
The differentiability assumptions on $B$ from (B1) are precisely such
that the map $(W,h) \mapsto h \a \, (\id - h \a A)^{-1} \, B(W)$ is of
the same class. 

First, we show that $\Pi (\, \cdot \,; U, h)$ maps $\kW_j$,
$j=0,\ldots, K$, into itself for fixed $U \in \kU$ and $h \in [0,h_*]$
with appropriate $h_*>0$.  We begin by taking $h_*$ as in
Lemma~\ref{l.RK.Lambda} and estimate, for $W \in \kW_j$,
\begin{align}
  \norm{\Pi(W;U,h) - \1 U^0}{\kZ_j} 
  & \leq \norm{(\id - (\id - h {\a} {A} )^{-1}) \1 U^0}{\kZ_j}
    \notag \\
  & \quad + \norm{(\id - h {\a} {A} )^{-1}}{\kY_j^s \to \kZ_j} \,
            \norm{U-U^0}{\kY_j}
    \notag \\
  & \quad + h \, \norm{(\id - h \a A)^{-1}\a}{\kY_j^s \to \kZ_j} \, 
            \norm{B(W)}{\kZ_j}
     \notag \\
  & \leq \norm{(\id - (\id - h {\a} {A} )^{-1})\1 U^0}{\kZ_j} 
         + \Lambda \, R_* +  h \, \Lambda \, \norm{\a}{} \, M_j \,,
  \label{e.helpRK}
\end{align}
where, in the last step, we have used \eqref{e.RKShA.a} from
Lemma~\ref{l.RK.Lambda} and $M_j$ is the bound on $B$ from condition
(B1).  Since, again by Lemma~\ref{l.RK.Lambda}, the map $h \mapsto
(\id - h {\a} {A})^{-1}W$ is continuous on each $\kZ_j$, we can
possibly shrink $h_*$ such that the right hand side of
\eqref{e.helpRK} is less than $R$.  This proves that $\Pi (\, \cdot
\,; U, h)$ maps $\kW_j$ into itself and implies condition (i) of
Theorem~\ref{t.cm-scale}.

Next, for $j=0,\ldots, K$,
\begin{align}
  \norm{\D_W \Pi(W;U,h)}{\kZ_j \to \kZ_j} 
  & \leq h \, \norm{(\id - h {\a} {A})^{-1}\a}{\kZ_j \to \kZ_j} \,
         \norm{\D B(W)}{\kZ_j \to \kZ_j} 
    \notag \\
  & \leq h \, \Lambda \, \norm{\a}{} \, M'_j \,.
    \label{e.contraction}
\end{align}
Thus, by possibly shrinking $h_*$ again, the right hand bound can be
made less than $1$.  This proves that $\Pi (\, \cdot \,; U, h)$ is a
contraction on $\kB_{R}^{\kY^s}(\1 U^0)$ uniformly for $U \in
\kB_{R_*}^{\kY} (U^0)$ and $h \in [0,h_*]$.  Here we used that $B$ is
at least $\kC^1$ on the highest rung $\kY_K$ of the scale since, in
condition (B1), we require $N>K$.  This verifies condition (ii) of
Theorem~\ref{t.cm-scale}.

Theorem~\ref{t.cm-scale} then applies and asserts the existence of a
fixed point
\[
  W \in \bigcap_{\substack{j+k \leq N \\ \ell \leq k \leq K}}
  \kCb^{(\underline j, \ell)} (B_{R_*}^{\kY_K}(U^0) \times (0,h_*); 
  \kB_R^{\kY_{k-\ell}^s}(U^0\1)) \,.
\]  
Assertion \eqref{e.h-diff} for the $W^i$ then follows from
Lemma~\ref{l.cm-scale}.

To prove the corresponding estimates for $\Psi^h$, note that by
\eqref{eq:NewRKUpdate}, condition (B1), Lemma~\ref{l.RK.Lambda}, and
Lemma~\ref{l.RK.ShA} we can adapt $h_*>0$ such that for $j=0,\ldots,
K$,
\begin{align*}
  \norm{\Psi^h(U) - U^0}{\kY_j} 
  & \leq \norm{\mS(hA)(U-U^0)}{\kY_j} + 
         \norm{\mS(hA)U^0-U^0}{\kY_j} + 
         h \norm{b}{} \Lambda M_j \\
  & \leq R/2 + \norm{\mS(hA)U^0-U^0}{\kY_j} + 
         h \norm{b}{} \Lambda M_j \leq R \,.
\end{align*}
Further, the first term of \eqref{eq:NewRKUpdate} is of class
\eqref{e.h-diff} by Lemma~\ref{l.RK.ShA}.  For the second term of
\eqref{eq:NewRKUpdate}, we note that the map $\Sigma$ defined as
\[ 
  \Sigma(W,U,h) = h (\id - h\a A)^{-1}B(W) \,,
\]
  satisfies
\[
  \Sigma \in \bigcap_{\substack{i+j+k \leq N \\ \ell \leq k \leq K}}
  \kCb^{(\underline i, \underline j, \ell)} 
  ((\kD_k)^s \times \kY_K \times \kI; \kY^s_{k-\ell}) \,.
\]
Lemma~\ref{l.cm-composition} then implies \eqref{e.h-diff} for $\Psi$.
Assertion \eqref{e.h-diff} for $\Psi$ then follows from
Lemma~\ref{l.cm-scale}.
 
Finally, differentiating \eqref{eq:NewRKUpdate} and taking the
operator norm on $\kY$, we obtain
\begin{align*}
 & \norm{\D \Psi^h(U)}{\mybf{\kE(\kY),h}}
   \leq \norm{\mS(hA)}{\kE(\kY),\mybf{h}} \notag \\
  & \quad + h s \,\mybf{\Theta_\mS} \,  \norm{\b}{} \,
         \norm{(\id - h \a A)^{-1}}{\kE(\kY^s)} \,
         \norm{\D B(W)}{\kE(\kY^s) } \,
         \norm{\D_U W(U,h)}{\kE(\kY,\kY^s) }
         \notag \\
  & \leq (1 + \sigma \, h) + h \, s \, \mybf{\Theta_\mS} \, 
         \norm{\b}{} \, \Lambda \, M_0' \,
         \norm{W}{\kCb^{(\underline{1},0)}(\kB_{R_*}^{\kY}(U^0) \times
                  [0,h_*];\kY^s)}
         \notag \\
  & \equiv 1+ \sigma_\Psi \, h \,,
\end{align*}
where we use \mybf{\eqref{e.h-depNorms}, \eqref{e.h-depNorm-II}} and
\eqref{e.RKShA.a}, and refer to \eqref{e.h-diff} for the bound on $W$.
This proves \eqref{e.DPsi}.
\end{proof}
 
While this theorem gives an existence and regularity result for the
numerical time-$h$ map $\Psi^h$, it does not yield control over the
maximum step size $h_*$ when we want to define $\Psi^h$ on a general
open bounded domain.  We address this issue in the following theorem
which is the discrete time analogue of
Theorem~\ref{t.local-diff-unif}.
 
\begin{theorem}[Existence and regularity of numerical method, uniform
version] \label{t.h-diff-unif} 
Let the semilinear evolution equation \eqref{e.pde} satisfy conditions
\textup{(A0)} and \textup{(B1)} and apply a Runge--Kutta method
subject to condition \textup{(RK2)} to it.  Moreover, assume
\textup{(A1)} or that the Runge--Kutta matrix $\a$ is invertible.
Choose $\delta \in (0, \delta_*]$ small enough such that
$\kD_{K+1}^{-\delta}$ is non-empty.  Then there exists $h_{*}>0$ such
that \eqref{e.h-diff-gen} and \eqref{e.h-diff} and, under assumption
\textup{(A1)}, \eqref{e.DPsi} hold with bounds uniform for $U^0 \in
\kD_{K+1}^{-\delta}$ with $R=\delta$.  Moreover, the stage vector
$W(U,h)$ satisfies
\begin{subequations}
\begin{equation}
  \label{e.h-diff-unif-a}
  W \in \bigcap_{\substack{j+k \leq N \\ \ell \leq k \leq K\mybf{+1}}}
  \kCb^{(\underline j, \ell)} (\kD_{K+1}^{-\delta} \times [0,h_*]; 
  \kY_{\mybf{k-\ell}}^s) 
\end{equation}
and, if $\a$ is
invertible, the numerical time-$h$ map $\Psi(U,h)=\Psi^h(U)$ satisfies
\begin{equation}
  \label{e.h-diff-unif-b}
  \Psi \in \bigcap_{\substack{j+k \leq N \\ \ell \leq k \leq K\mybf{+1}}}
  \kCb^{(\underline j, \ell)} (\kD_{K+1}^{-\delta} \times [0,h_*];
  \kY_{\mybf{k-\ell}}) \,.
\end{equation}
\end{subequations}
In particular,  when $N>K+1$,
\begin{subequations}
\begin{equation} 
  \label{e.h-diff-unif-A}
  W^j \in\kCb^{K\mybf{+1}} (\kD_{K+1}^{-\delta} \times [0,h_*]; \mybf{\kD}),\quad
  j=1,\ldots, s,
\end{equation}
and, if addition $\a$ is invertible,
\begin{equation}
  \label{e.h-diff-unif-B}
  \Psi \in\kCb^{K\mybf{+1}} (\kD_{K+1}^{-\delta} \times [0,h_*]; \mybf{\kD}) \,.
\end{equation} 
\end{subequations}
The bounds on $W$, $\Psi$ and $h_*$ depend only on the bounds afforded
by \textup{(B1)}, \eqref{e.Rk}, \eqref{e.RKShA.1}, \eqref{e.RKShA.c},
on the coefficients of the method, and on $\delta$.
\end{theorem}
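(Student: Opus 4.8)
The plan is to transplant the proof of Theorem~\ref{t.local-diff-unif} to the discrete setting, with $\e^{tA}$ replaced by $\mS(hA)$ and the resolvent bounds of Lemma~\ref{l.RK.Lambda} playing the role of \eqref{e.ResolventEst}. First I would apply Theorem~\ref{t.h-diff} at every $U^0\in\kD_{K+1}^{-\delta}$ with $R=\delta$ and isolate the one term in its proof that prevents a choice of $h_*$ uniform in $U^0$. As in the continuous case, this is the term $\norm{(\id-(\id-h\a A)^{-1})\1 U^0}{\kY_j^s}$ appearing in \eqref{e.helpRK}, together with the analogous term $\norm{\mS(hA)U^0-U^0}{\kY_j}$ in the estimate for $\Psi^h(U)-U^0$. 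Using identity \eqref{e.inv-ident}, the fact that $A$ commutes with $(\id-h\a A)^{-1}$ and with $\mS(hA)$, estimate \eqref{e.RKShA.a}, \eqref{e.normA}, and the definition \eqref{e.Rk}, these are bounded, for $j=0,\dots,K$, by
\[
  \norm{(\id-(\id-h\a A)^{-1})\1 U^0}{\kY_j^s}
  = h\,\norm{\a\,(\id-h\a A)^{-1}\1\,(AU^0)}{\kY_j^s}
  \le h\,\Lambda\,\norm{\a}{}\,R_{1+j}
\]
and similarly $\norm{\mS(hA)U^0-U^0}{\kY_j}\le h\,\Lambda\,\norm{\b}{}\,R_{1+j}$. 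Feeding these $O(h)\,R_{1+j}$ bounds into \eqref{e.helpRK}, \eqref{e.contraction} and into the derivation of \eqref{e.DPsi} in the proof of Theorem~\ref{t.h-diff}, I can fix $h_*>0$ depending only on $\delta$, \eqref{e.Rk}, the bounds of \textup{(B1)}, \eqref{e.RKShA.1}, \eqref{e.RKShA.c}, and the method coefficients such that \eqref{e.h-diff-gen}, \eqref{e.h-diff}, and---under \textup{(A1)}---\eqref{e.DPsi} hold with bounds uniform for $U^0\in\kD_{K+1}^{-\delta}$.

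Second, I would promote the stage vector by one rung, i.e.\ prove \eqref{e.h-diff-unif-a}. Writing \eqref{eq:newRKStagesIter} as $W=(\id-h\a A)^{-1}\1 U+h\a\,(\id-h\a A)^{-1}B(W)$, one checks that $W(U,h)\in\kY_{K+1}^s$ whenever $U\in\kY_{K+1}$, with $\norm{W(U,h)}{\kY_{K+1}^s}$ uniformly bounded: the first summand lies in $\kY_{K+1}^s$ with norm at most $\Lambda\,\norm{U}{\kY_{K+1}}$, and the second lies in $\kY_K^s$ with its $A$-image $h\a A(\id-h\a A)^{-1}B(W)$ again in $\kY_K^s$ by \eqref{e.RKShA.b}, hence in $\kY_{K+1}^s$ with a uniform bound. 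Applying $A$ and using that it commutes with the resolvent gives the closed form
\[
  AW=(\id-h\a A)^{-1}\1\,(AU)+h\a A\,(\id-h\a A)^{-1}\,B(W)\,,
\]
in which no derivative of $B$ appears. Since \eqref{e.RKShA.A} and \eqref{e.RKShA.B} describe the regularity classes of $(V,h)\mapsto(\id-h\a A)^{-1}\1 V$ and $(V,h)\mapsto h\a A(\id-h\a A)^{-1}V$, composing the first with the bounded linear map $U\mapsto AU$ and the second with $B\circ W$---whose class on $\kD_{K+1}^{-\delta}\times[0,h_*]$ is the one obtained in step~1---via the chain rule on the scale of Banach spaces (Lemma~\ref{l.cm-composition}) and Lemma~\ref{l.DwPiClass} shows, exactly as in the proof of Theorem~\ref{t.local-diff-unif}, that $AW$ lies in $\kCb^{(\underline j,\ell)}(\kD_{K+1}^{-\delta}\times[0,h_*];\kY_{k-\ell}^s)$ for all $j+k\le N-1$ with $\ell\le k\le K$. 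Combining this with the class of $W$ from step~1 and Lemma~\ref{l.technical1} yields \eqref{e.h-diff-unif-a}. When $\a$ is invertible, \eqref{eq:RKStagesIter} and \eqref{eq:RKUpdate} give $\Psi^h(U)=U+\b^T\a^{-1}(W-\1 U)$, so $\Psi$ inherits \eqref{e.h-diff-unif-a} and hence \eqref{e.h-diff-unif-b}; alternatively one argues from \eqref{eq:NewRKUpdate} using Lemma~\ref{l.RK.ShA} and \eqref{e.RKShA.B2}. Finally, \eqref{e.h-diff-unif-A} and \eqref{e.h-diff-unif-B} follow from Lemma~\ref{l.cm-scale} with $K$ replaced by $K+1$, together with the uniform bounds $\norm{W^j-U^0}{\kY_0}\le\delta$, $\norm{\Psi^h(U)-U^0}{\kY_0}\le\delta$ and the inclusion $\kB_\delta^{\kY_0}(U^0)\subset\kD$.

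The main obstacle I anticipate is not any individual estimate but the consistent bookkeeping of the regularity indices through the compositions: one must verify that applying $A$ costs exactly one rung, that the $h$-regularity of the resolvent family from Lemma~\ref{l.RK.Lambda}---including the extra $h$-derivative afforded by the explicit factor of $h$ in \eqref{e.RKShA.B2}, which is precisely what makes the update map one rung smoother---combines with the chain rule on the scale to deliver the index set $\{j+k\le N,\ \ell\le k\le K+1\}$ claimed in \eqref{e.h-diff-unif-a} and \eqref{e.h-diff-unif-b}, and that all mixed $(U,h)$-derivatives carry constants depending only on $\delta$, \textup{(B1)}, \eqref{e.Rk}, \eqref{e.RKShA.1}, \eqref{e.RKShA.c}, the method coefficients, and---for \eqref{e.DPsi}---the constants of \textup{(A1)}.
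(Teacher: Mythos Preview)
Your overall plan mirrors the paper's, and step~1 is exactly right: the only obstruction to a uniform $h_*$ in \eqref{e.helpRK} is $\norm{(\id-(\id-h\a A)^{-1})\1 U^0}{\kY_j^s}$, and your bound via \eqref{e.inv-ident} is equivalent to the paper's mean-value estimate.

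There is, however, a genuine gap in step~2. You invoke Lemma~\ref{l.technical1} using only two of its three hypotheses. With $L=0$, that lemma requires (a) $W\in\kC_{N,K+1,1}$, (b) $W\in\kC_{N,0,0}$, and (c) $\partial_h W\in\kC_{N-1,K,0}$. Your $AW$-computation gives (a), step~1 gives (b), but nothing you have written yields (c). Concretely, the index set in \eqref{e.h-diff-unif-a} contains the corner $k=\ell=K+1$, i.e.\ $\partial_h^{K+1}W$ bounded in $\kY_0^s$; neither step~1 (which stops at $\ell\le K$) nor the $AW$-identity (which, mapping into $\kY_{k-\ell+1}^s$, still only produces $\ell\le K$) reaches it. In the continuous case the missing $\mu$-derivative comes for free from the PDE $\partial_t\Phi=A\Phi+B\circ\Phi$, but there is no analogous identity relating $\partial_h W$ to $AW$.

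The paper supplies (c) by a separate argument you allude to but do not carry out: it reapplies Theorem~\ref{t.cm-scale} to $\Pi$ on the artificial $(K{+}1)$-scale $\kZ_0=\kY_0^s,\dots,\kZ_K=\kY_K^s,\kZ_{K+1}=\kY_K^s$ (top rung repeated), using precisely \eqref{e.RKShA.B2}---the explicit factor $h$ in $h\a(\id-h\a A)^{-1}B(W)$ buys one $h$-derivative without costing a spatial rung---to verify condition~(i) of Theorem~\ref{t.cm-scale} on this extended scale. That yields $\partial_h W\in\kC_{N-1,K,0}$, and then Lemma~\ref{l.technical1} (the paper writes \ref{l.technical2}, a typo) closes the argument. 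Your final paragraph shows you know \eqref{e.RKShA.B2} is the key, but it has to enter as an actual step, not as an anticipated difficulty.

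One point where your route is cleaner than the paper's: once \eqref{e.h-diff-unif-a} is established for $W$, your observation $\Psi^h(U)=U+\b^T\a^{-1}(W-\1 U)$ immediately transfers the class to $\Psi$, whereas the paper reruns the $A\Psi$ and $\partial_h\Psi$ analysis separately.
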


\begin{proof}
Let $R=\delta$.  We apply Theorem~\ref{t.h-diff} for each $U^0 \in
\kD^{-\delta}_{K+1}$.  Note that for $j=0,\ldots, K$,
\begin{align*}
  \norm{(\id - (\id - h {\a} {A} )^{-1}) \1 U^0}{\kY^s_j}
  & \leq h \max_{s \in [0,h]} 
         \norm{\a A (\id - s {\a} {A} )^{-2} \1 U^0}{\kY^s_j}
    \notag \\
  & \leq h \, \Lambda^2 \, \norm{\a}{} \,  R_{K+1} \,. 
\end{align*}
Inserting this estimate into \eqref{e.helpRK}, we see that we can
choose $h_*>0$ small enough such that $\Pi( \, \cdot \,; U,h)$ maps
$\kW_j(U^0) \equiv \kB_{R}(\1 U^0) \subset \kY^s_j$ into itself for
$j=0,\ldots, K$, and, from \eqref{e.contraction}, such that $\Pi$ is a
contraction on $\kW_j(U^0)$ uniformly for $U^0 \in
\kD_{K+1}^{-\delta}$, $U \in \kB^{\kY_{K+1}}_{R_*}(U^0)$, and $h \in
[0,h_*]$, where $R_* = R/(2\max\{c_\mS,\Lambda\})$.  As in the proof
of Theorem~\ref{t.h-diff}, we find that \eqref{e.h-diff-gen},
\eqref{e.h-diff} and \eqref{e.DPsi} hold with uniform bounds in $U^0
\in \kD_{K+1}^{-\delta}$, and that
\begin{equation}
  \label{e.unifReg}
  W^i,\Psi \in \bigcap_{\substack{j+k \leq N \\ \ell \leq k \leq K}}
  \kCb^{(\underline j, \ell)} (\kD_{K+1}^{-\delta} \times [0,h_*]; 
  \kD_{k-\ell})
\end{equation}
for $i=1,\ldots, s$.

To prove that $W$ actually maps into a space one step up the scale, we
show that
\begin{equation}
  \label{e.unifReg-Y1}
  AW^i  \in \bigcap_{\substack{j+k \leq N \\ \ell \leq k \leq K}}
  \kCb^{(\underline j, \ell)} (\kD_{K+1}^{-\delta} \times [0,h_*]; 
  \kY_{k-\ell})
\end{equation}
for $i=1,\ldots, s$.  We apply $A$ to \eqref{eq:newRKStagesIter}, so
that
\begin{equation}
  A W = A(\id - h\a A)^{-1} \1 U + h\a A(\id - h \a A)^{-1} B(W) \,.
  \label{e.aw}
\end{equation}

The first term of \eqref{e.aw} is of class \eqref{e.unifReg-Y1} by
Lemma~\ref{l.RK.Lambda}.  For the second term, we note that, by (B1)
and \eqref{e.RKShA.B},
\[
  \Sigma(W,U,h) = h \a A (\id - h \a A)^{-1} \, B(W)
\]
is of class
\[
  \Sigma \in \bigcap_{\substack{i+j+k \leq N \\ \ell \leq k \leq K}}
  \kCb^{(\underline i, \underline j, \ell)}
    ((\kD_k)^s \times \kY_{K+1} \times \kI; \kY_{k-\ell}^s) \,,
\]
so that $(\Sigma\circ W)^i$ is of class \eqref{e.unifReg-Y1} for
$i=1,\ldots, s$ by Lemma~\ref{l.cm-composition}. This proves
\eqref{e.unifReg-Y1}.

To prove that, for $\a$ invertible, $A\Psi$ is also of class
\eqref{e.unifReg-Y1}, we proceed analogously.  Applying $A$ to
\eqref{eq:NewRKUpdate}, we obtain
\begin{equation}
  \label{e.APsi}
  A \Psi^h(U) 
  = \mS(hA) AU  + h \b^T A (\id - h \a A)^{-1} B(W(U,h)) \,.
\end{equation}
The first term on the right of \eqref{e.APsi} is of class
\eqref{e.unifReg-Y1} by Lemma~\ref{l.RK.Lambda}.  We already proved
above that $V=\Sigma\circ W$ is of class \eqref{e.unifReg-Y1}.  As
$\a$ is invertible, $\b^T \a^{-1} V$ and, hence, \eqref{e.APsi} are of
class \eqref{e.unifReg-Y1}.

Next, we show improved regularity of $W$ and $\Psi$ with respect to
the step size in the same way as in the proof of
Theorem~\ref{t.local-diff-unif}.  Namely, we prove that
\[
  \partial_h W^i,\partial_h \Psi 
  \in \bigcap_{\substack{j+k \leq N-1 \\ \ell \leq k \leq K}}
  \kCb^{(\underline j, \ell)} (\kD_{K+1} \times [0,h_*];
  \kY_{k-\ell}) \,.
\]
Consider the $(K+1)$-scale of Banach spaces $\kZ_j =\kY^s_j$ for $j=0,
\dots, K$ and $\kZ_{K+1} = \kY^s_{K}$ with $\kW_{j} = \kD_j^s$ for
$j=0, \dots, K$ and $\kW_{K+1} = \kD^s_{K}$.  Set
$\kU=\kD_{K+1}^{-\delta}$, $\kX = \kY_{K+1}$, and $\kI=(0,h_*)$.  Due
to \eqref{e.RKShA.B2} and (B1), the map $\Pi$ from
\eqref{eq:newRKStagesIter} satisfies the assumptions of
Theorem~\ref{t.cm-scale} in this setting.  This shows that $\partial_h
W$ is of the above class, and proves, with \eqref{e.unifReg},
\eqref{e.unifReg-Y1} and Lemma~\ref{l.technical2} claim
\eqref{e.h-diff-unif-a} for the stage vector $W$.  Then
\eqref{e.RKShA.B2}, Lemma~\ref{l.RK.ShA}, \eqref{e.h-diff-unif-a},
Lemma~\ref{l.cm-composition} and Lemma \ref{l.DwPiClass} applied to
\begin{align*}
  \partial_h \Psi^h(U) 
  & = \partial_h \mS(hA) U
      + \b^T (\id-h\a A)^{-2} \, B(W(U,h))
      \notag \\
  & \quad
      + h\b^T(\id-h\a A)^{-1} \, \D B(W(U,h)) \, \partial_h W(U,h) \,,
\end{align*}
imply that $\partial_h \Psi$ is of the same class as $\partial_h W^i$.
When $\a$ is invertible, then, using that $A\Psi$ is of class
\eqref{e.unifReg-Y1} and using Lemma~\ref{l.technical2} as before,
claim \eqref{e.h-diff-unif-b} follows.

Statements \eqref{e.h-diff-unif-A} and \eqref{e.h-diff-unif-B} are, as
before, a consequence of Lemma~\ref{l.cm-scale}.
\end{proof}
  
\begin{remark}\label{r.h-diff-unif}
We actually showed in Theorem \ref{t.h-diff-unif} that for $\a$
invertible
\[
  W^i, \Psi \in \bigcap_{\substack{j+k \leq N \\ \ell \leq k \leq K}}
  \kCb^{(\underline j, \ell)} (\kD_{K+1}^{-\delta} \times [0,h_*]; 
  \kY_{k-\ell+1}),
\]
$i=1,\ldots, s$, i.e. $W^i$ and $\Psi$ have slightly higher regularity
in $U^0$ than the semiflow $\Phi^t$.
\end{remark}

\begin{remark}[Image of the numerical method and stage vector] 
Analogous to the situation for the semiflow noted in
Remark~\ref{r.local-diff-unif}, the proof of
Theorem~\ref{t.h-diff-unif} actually shows that, \mybf{when $\a$ is
invertible,}
\[
  \Psi, W^j \in \bigcap_{\substack{j+k \leq N \\ \ell \leq k \leq K +1\\ 
                              (k,\ell) \neq (K+1,0)}}
  \kCb^{(\underline j, \ell)} (\kD_{K+1}^{-\delta} \times [0,T_*]; 
  \mybf{  \kD_{k-\ell})}
\]
for $j=1,\ldots, s$.
\end{remark}

\begin{remark}\label{r.discr-improved-t-diff}
The proof of Theorem \ref{t.h-diff-unif} shows that, when  (A0), (A1), (B1) and
(RK2) hold,
but  $\a$ is not assumed invertible,
 we still have
 \[
 \Psi \in \bigcap_{\substack{j+k \leq N-1 \\   k \leq K }}
  \kCb^{(\underline j, k+1)} (\kD_{K+1}^{-\delta} \times [0,T_*]; 
    \kD). 
\] 
\end{remark}
 
\begin{remark} \label{r.flow} 
If $A$ generates a group rather than a semigroup, we may assume (A1)
for $h \in [-h_*,h_*]$ for some $h_*>0$.  Then Theorems~\ref{t.h-diff}
and~\ref{t.h-diff-unif} hold with $h \in [-h_*,h_*]$ (for some,
possibly, smaller choice of $h_*>0$).  Moreover, we can then also
weaken the requirement in (RK1) to $\lvert \mS(z) \rvert \leq 1$ for
$z \in \i \R$ and still show that (A2) implies (A1) for $h\in
[-h_*,h_*]$.  In this setting, the proof of Lemma~\ref{l.RK.Lambda2}
proceeds by recalling that, according to Remark~\ref{rem:HamPazy},
there exists $\omega>0$ such that $\lvert \Re (\spec A) \rvert \leq
\omega$.  Hence, we can decompose $\Q A$ into a skew-symmetric
operator $A_1 = \Im (A_\rn)$ and a bounded operator $A_2 = A_\rb+
\Re(A_\rn)$.
\end{remark}
 
%%%%%%%%%%%%%%%%%%%%%%%%%%%%%%%%%%%%%%%%%%%%%%%%%%%%%%%%%%%%%%%%%%%%%%

\subsection{Convergence analysis of {\em A}-stable Runge-Kutta
methods}
\label{ss.convergence}

In this section we present a convergence analysis of $A$-stable
Runge--Kutta methods applied to semilinear evolution equation
\eqref{e.pde}.  The main difficulty is to prove differentiability in
the step size $h$ of the implicitly defined Runge--Kutta methods as
maps from a space of functions with higher regularity to a space with
lower regularity.
  
\begin{theorem}[Convergence] \label{c.RKConv-unif} 
Apply a Runge--Kutta method of classical order $p$ subject to
conditions \textup{(RK2)} and \textup{(A1)} to the semilinear
evolution equation \eqref{e.pde}.  Assume further that \textup{(B1)}
holds with $K \geq p$. Pick $\delta \in (0,\delta_*]$ such that $\kD_{p+1}^{-\delta}$
is non-empty and $T>0$.  Then there exist positive constants $h_*$,
$c_1$, and $c_2$ which only depend on the bounds afforded by
\textup{(B1)} \mybf{and \textup{(A1)}}, \eqref{e.RKShA.1}, on the
coefficients of the method, and on $\delta$, such that for every $U^0$
with
\begin{equation}
  \label{e.unifConvAss}
  \{ \Phi^t(U^0) \colon t \in [0,T] \} \subset \kD^{-\delta}_{p+1}
\end{equation}
and for every $h \in [0,h_*]$, the numerical solution
$(\Psi^h)^m(U^0)$ lies in $\kD$ and satisfies
\[
  \norm{(\Psi^h)^m(U^0) - \Phi^{mh}(U^0)}{\kY} 
  \leq c_2 \, \e^{c_1 mh} \, h^p
\]
so long as $mh \leq T$.  
\end{theorem}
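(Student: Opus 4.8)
The strategy is to derive the statement from the abstract convergence theorem, Theorem~\ref{t.genConv}, applied with $\kX = \kY$ and $\kZ = \kY_{p+1}$. Since $K \geq p$ and $N > K$, assumption (B1) holds a fortiori with $K$ replaced by $p$ (and then $N \geq p+1$); I work with this reduced value throughout. It then remains to (i) exhibit domains $\kD_\kX$ open in $\kY$ and $\kD_\kZ$ open in $\kY_{p+1}$ with $\kD_\kZ \subset \kD_\kX$; (ii) check that the exact solution $U(\cdot) = \Phi^{(\cdot)}(U^0)$ satisfies \eqref{e.solution}; and (iii) verify conditions (C1) and (C2) for the Runge--Kutta time-$h$ map $\Psi^h$.

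For the exact solution, note that by the construction of the sets $\kD_\ell^{-\delta}$ one has $\kD_{p+1}^{-\delta} \subset \kD_p^{-\delta} \subset \dots \subset \kD^{-\delta}$, so \eqref{e.unifConvAss} puts the whole trajectory in $\kD^{-\delta}$. Covering $[0,T]$ by $\lceil T/T_* \rceil$ subintervals of length at most $T_* = T_*(\delta/2)$ from Theorem~\ref{t.local-diff-unif} (with $K$ replaced by $p$ and $\delta$ by $\delta/2$), applying that theorem with base point $\Phi^{t_i}(U^0) \in \kD_{p+1}^{-\delta}$ on each subinterval, and patching via the semiflow property, yields $U \in \kC([0,T];\kY_{p+1}) \cap \kC^{p+1}([0,T];\kY)$ with a bound on $\norm{U}{\kC^{p+1}([0,T];\kY)}$ depending only on $\delta$ and on the constants in (B1) and \eqref{e.etAEstimate} (the latter through \eqref{e.Rk}). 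In particular the trajectory is compact in $\kY$, lies in the $\kY$-open set $\kD$, and $\dist_\kY(\{U(t)\colon t\in[0,T]\},\partial\kD) \geq \delta$.

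Now the domains and conditions (C1)--(C2). Apply Theorem~\ref{t.h-diff-unif} with $K$ replaced by $0$ (admissible, and covered since (A1) holds): for every $U^0 \in \kD_1^{-\delta}$ — hence for every point of the trajectory, as $\kD_{p+1}^{-\delta} \subset \kD_1^{-\delta}$ — the stage equations are uniquely solvable and $\Psi^h \in \kC^1(\kB^\kY_{R_*}(U^0);\kY)$ for $h \in [0,h_*]$ with $R_* = \delta/(2\max\{c_\mS,\Lambda\})$, and \eqref{e.DPsi} holds with $h_*$ and $\sigma_\Psi$ independent of $U^0$. Let $\kD_\kX$ be the $\kY$-tube $\bigcup_{t \in [0,T]} \kB^\kY_{R_*/2}(U(t))$: it is open in $\kY$, contained in $\kD$, satisfies $\dist_\kY(\{U(t)\},\partial\kD_\kX) \geq R_*/2 > 0$, and every point of $\kD_\kX$ lies in some $\kB^\kY_{R_*}(U(t))$, so $\Psi^h$ is a well-defined $\kC^1$ self-map of $\kY$ on $\kD_\kX$ with $\sup_{U\in\kD_\kX}\norm{\D\Psi^h(U)}{\kE(\kY),h} \leq 1 + \sigma_\Psi h$. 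Since by \eqref{e.h-depNorm-II} the norm \eqref{e.h-depNorm} is equivalent to the $\kY$-norm in the sense of \eqref{e.h-depNorms} with $\Theta_* = \Theta_\mS$, this is exactly condition (C1). Set $\kD_\kZ = \kD_\kX \cap \kD_{p+1}^{-\delta}$, which is open and $\kY_{p+1}$-bounded, contains the trajectory, and satisfies $\kD_\kZ \subset \kD_\kX$. For (C2), apply Theorem~\ref{t.h-diff-unif} with $K$ replaced by $p$: taking $j = 0$, $k = \ell = p+1$ in \eqref{e.h-diff-unif-b} when $\a$ is invertible, or $j = 0$, $k = p$ in Remark~\ref{r.discr-improved-t-diff} in general, gives $\Psi \in \kCb^{(\underline 0, p+1)}(\kD_{p+1}^{-\delta} \times [0,h_*]; \kY)$; restricting to $\kD_\kZ \subset \kD_{p+1}^{-\delta}$ yields $h \mapsto \Psi^h(U) \in \kC^{p+1}([0,h_*];\kY)$ together with $\sup_{U \in \kD_\kZ,\, h \in [0,h_*]} \norm{\partial_h^{p+1}\Psi^h(U)}{\kY} < \infty$, which is \eqref{e.dhp}.

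With \eqref{e.solution}, (C1) and (C2) in hand, Theorem~\ref{t.genConv} applies and gives $\norm{(\Psi^h)^m(U^0) - \Phi^{mh}(U^0)}{\kY} \leq c_2 \, \e^{c_1 mh} \, h^p$ for $mh \leq T$, where, tracing the dependencies listed in Theorem~\ref{t.genConv} back through the bounds just collected, $h_*$, $c_1$, $c_2$ depend only on $\delta$, the bounds in (B1) and (A1), \eqref{e.RKShA.1}, and the coefficients of the method; as part of the same argument $h_*$ is shrunk so that each iterate $(\Psi^h)^m(U^0)$ stays in $\kD_\kX \subset \kD$. The step demanding the most care is (C1): one needs $\Psi^h$ to be genuinely $\kC^1$ as a self-map of the low-regularity space $\kY$ on an open $\kY$-neighbourhood of the entire trajectory — not merely on balls in the higher rungs $\kY_k$ — and its differential to be contractive up to an $O(h)$ error uniformly in $h$; this is precisely where assumption (A1) and the $h$-dependent norm \eqref{e.h-depNorm} are indispensable, and it is delivered by estimate \eqref{e.DPsi}. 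A secondary technical point is the patching of the local-in-time regularity of both $\Phi^t$ and $\Psi^h$ into statements uniform on $[0,T]$ with constants depending only on $\delta$ and the structural data, which rests on the cocycle property and on compactness of the trajectory.
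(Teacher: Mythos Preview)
Your proof is correct and follows the same route as the paper: apply the abstract convergence result Theorem~\ref{t.genConv} with $\kX=\kY$, $\kZ=\kY_{p+1}$, draw the regularity \eqref{e.solution} of the exact solution from Theorem~\ref{t.local-diff-unif}, and verify (C1) and (C2) via Theorem~\ref{t.h-diff-unif} together with Remark~\ref{r.discr-improved-t-diff}. The only difference is cosmetic: the paper takes $\kD_\kZ=\kD_{p+1}^{-\delta}$ and $\kD_\kX=\bigcup_{U\in\kD_{p+1}^{-\delta}}\kB_\delta^\kY(U)$, both independent of the particular trajectory, whereas you build a tube $\bigcup_{t\in[0,T]}\kB_{R_*/2}^\kY(U(t))$ around it; either choice works, and your care in using the radius $R_*$ on which \eqref{e.DPsi} is actually guaranteed is arguably cleaner than the paper's terser treatment.
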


\begin{proof}
We invoke Theorem~\ref{t.genConv} with $\kZ = \kY_{p+1}$, $\kD_\kZ =
\kD^{-\delta}_{p+1}$, $\kX = \kY$,
\[
  \kD_\kX 
  = \bigcup_{U \in \kD^{-\delta}_{p+1} } \kB_{R}^\kY(U)
  \subset \kD
\]
where $R=\delta$, and note that $\dist_\kX(\{U(t) \colon t \in
[0,T] \}, \partial \kD_\kX) \geq\delta$.  To verify the assumptions of
the theorem, we first note that local existence and regularity of a
solution to the evolution equation \eqref{e.pde} in the appropriate
spaces is always guaranteed by Theorem~\ref{t.local-diff-unif}.  In
particular, for initial data $U^0$ such that \eqref{e.unifConvAss}
holds, we also have $U \in \kC ([0,T]; \kD^{-\delta}_{p+1}) \cap
\kC^{p+1}([0,T]; \kD^{-\delta})$ with uniform bounds in the norms of
both spaces.  Conditions (C1) and (C2) follow from
Theorem~\ref{t.h-diff-unif} and Remark~\ref{r.discr-improved-t-diff}.
\end{proof}
 
\begin{remark} 
As explained in Sections~\ref{ss.swe} and~\ref{ss.nse}, the semilinear
wave equation and the nonlinear Schr\"odinger equation satisfy the
assumptions of Theorems~\ref{t.h-diff}--\ref{c.RKConv-unif} provided
the nonlinearity is sufficiently smooth.
\end{remark}

In the following corollary we prove the convergence of the $U$-derivatives
of the numerical solution. 
  
\begin{corollary}[Convergence of derivatives]
\label{c.conv-deriv}
In the setting of Theorem~\ref{c.RKConv-unif} there exist positive
constants $h_*$, $c_1$, and $c_2$ which only depend on the bounds
afforded by \textup{(B1)} \mybf{and \textup{(A1)}}, \eqref{e.RKShA.1},
on the coefficients of the method, and on $\delta$, such that for
every $U^0$ satisfying \eqref{e.unifConvAss} and for every $h \in
[0,h_*]$
\[
  \norm{\D^j_{U} (\Psi^h)^m (U^0)- \D^j_{U}\Phi^{mh}(U^0)}%
       {\kE^j(\kY_{p+1},\kY)} 
  \leq c_2 \, \e^{c_1 mh} \, h^p
\]
for $j \leq N-p-1$ so long as $mh \leq T$.  
\end{corollary}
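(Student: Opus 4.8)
The plan is to deduce this from the convergence result already in hand, Theorem~\ref{c.RKConv-unif}, by applying it not to \eqref{e.pde} but to its $j$-fold variational (prolonged) system. Fix $j \le N-p-1$ and directions $V_1,\dots,V_j \in \kY_{p+1}$ with $\norm{V_i}{\kY_{p+1}}\le 1$; by multilinearity of the Fr\'echet derivatives it suffices to bound $\norm{\D_U^j(\Psi^h)^m(U^0)(V_1,\dots,V_j) - \D_U^j\Phi^{mh}(U^0)(V_1,\dots,V_j)}{\kY}$ uniformly in such $V_i$. On the product Hilbert space $\hat\kY = \kY^{L}$, with one component for $\Phi$ and one for each derivative $\D_U^{\lvert\alpha\rvert}\Phi$ evaluated along the sub-tuple $(V_i)_{i\in\alpha}$ for $\emptyset\neq\alpha\subset\{1,\dots,j\}$, consider $\partial_t\hat U = \hat A\hat U + \hat B(\hat U)$, where $\hat A$ is block diagonal with every block equal to $A$, and the components of $\hat B$ are the Fa\`a di Bruno / Leibniz expansions of the derivatives of $B$ composed with the flow, i.e.\ finite sums of terms $\D^{\lvert P\rvert}B(U)\,(W_{P_1},\dots,W_{P_{\lvert P\rvert}})$ over set partitions $P$ of subsets of $\{1,\dots,j\}$. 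Since $\hat A$ is block diagonal it generates a $C_0$-semigroup on every rung $\hat\kY_k = \kY_k^{L}$ equipped with the graph inner products consistent with \eqref{e.normYk}, so \textup{(A0)} holds for the prolonged system; moreover $\mS^n(h\hat A)$ is block diagonal with blocks $\mS^n(hA)$, so with the $\ell^\infty$-type norm on $\hat\kY$ the bound \eqref{e.ShaEstimate}, i.e.\ \textup{(A1)}, holds with the same $\Theta_\mS$, $\omega_\mS$; \textup{(RK2)} is a condition on the coefficients only and is unchanged.

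Next I would verify \textup{(B1)} for the prolonged system with $\hat K = p$ and $\hat N = N-j$. Take $\widehat{\kD_k}$ to have $\kD_k$ as its $\Phi$-component and a fixed ball $\kB^{\kY_k}_{\rho}(0)$ as each of its tangent components; these sets are $\hat\kY_k$-bounded, open, and nested by \eqref{e.normA}. Each term $\D^{\lvert P\rvert}B(U)(W_{P_1},\dots)$ of $\hat B$ lies, by \textup{(B1)} for $B$ together with the chain rule on the scale (Lemma~\ref{l.cm-composition}), in $\kCb^{\mu}(\widehat{\kD_k};\hat\kY_k)$ whenever $\lvert P\rvert+\mu\le N-k$; since $\lvert P\rvert\le j$ this gives $\hat B\in\kCb^{\hat N-k}(\widehat{\kD_k};\hat\kY_k)$ for $k=0,\dots,p$, and $\hat N = N-j \ge p+1 > p = \hat K$ precisely because $j\le N-p-1$. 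Thus the prolonged system meets \textup{(A0)}, \textup{(A1)}, \textup{(RK2)}, and \textup{(B1)} with $\hat K=p$, so Theorems~\ref{t.h-diff-unif} and~\ref{c.RKConv-unif} apply to it. The structural point that makes this detour work — and that a direct Gronwall argument on the $j$-th derivative misses — is that the stability estimate \eqref{e.DPsi} for the \emph{prolonged} Runge--Kutta map $\widehat\Psi^h$ absorbs exactly the cross terms (second derivatives of $\Psi^h$ acting on the order-$p$ error of the base trajectory) which would otherwise enter the error recursion at order $h^p$ rather than $h^{p+1}$.

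It then remains to identify the components. Applying the given Runge--Kutta method to $\partial_t\hat U = \hat A\hat U + \hat B(\hat U)$ produces a time-$h$ map $\widehat\Psi^h$ whose $\Phi$-component is $\Psi^h$ and whose top component is $\D_U^j\Psi^h(\cdot)(V_1,\dots,V_j)$: the prolonged stage equations are term by term the $U$-derivatives of \eqref{eq:RKStagesIter}, which the differentiated original stages (differentiable by Theorem~\ref{t.h-diff-unif}) satisfy, so by uniqueness of the stage vector (contraction under \textup{(RK2)}, \textup{(B1)}) the prolonged stages coincide with those derivatives; differentiating \eqref{eq:RKUpdate} then identifies the top component of $\widehat\Psi^h$, hence by iteration the top component of $(\widehat\Psi^h)^m$ with $\D_U^j(\Psi^h)^m(\cdot)(V_1,\dots,V_j)$, and likewise the flow $\hat\Phi^{mh}$ of the prolonged system has top component $\D_U^j\Phi^{mh}(\cdot)(V_1,\dots,V_j)$. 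With $\hat U^0 = (U^0;V_1,\dots,V_j;0,\dots,0)$ one checks $\{\hat\Phi^t(\hat U^0):t\in[0,T]\}\subset(\widehat{\kD_{p+1}})^{-\hat\delta}$: the $\Phi$-component stays in $\kD_{p+1}^{-\delta}$ by \eqref{e.unifConvAss}, while the tangent components $\D_U^{\lvert\alpha\rvert}\Phi^t(U^0)(V_\alpha)$ are bounded in $\kY_{p+1}$ over $t\in[0,T]$, uniformly in $U^0$ satisfying \eqref{e.unifConvAss} and in $\norm{V_i}{\kY_{p+1}}\le1$, by chaining Theorem~\ref{t.local-diff-unif} (with its $K$ set to $p$) over finitely many subintervals of length $T_*$; choosing $\rho$ larger than this bound and $\hat\delta=\min(\delta,\text{slack})$ keeps the orbit away from $\partial\widehat{\kD_{p+1}}$ by $\hat\delta$. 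Theorem~\ref{c.RKConv-unif} applied to the prolonged system then gives $\norm{(\widehat\Psi^h)^m(\hat U^0)-\hat\Phi^{mh}(\hat U^0)}{\hat\kY}\le c_2\,\e^{c_1 mh}\,h^p$ for $mh\le T$, with constants inheriting the stated dependencies; restricting to the top component and taking the supremum over the unit ball of $\kY_{p+1}$ (the constants being independent of the $V_i$) yields the claim. The case $j=0$ is Theorem~\ref{c.RKConv-unif} itself.

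The main obstacle I anticipate is the rigorous step that the prolonged Runge--Kutta map is genuinely the $j$-fold $U$-derivative of the original one — that is, that the implicitly defined stages commute with differentiation in the initial data — together with the bookkeeping that keeps $\hat B$ within reach of condition \textup{(B1)} on the nested domains $\widehat{\kD_k}$, which is exactly where the hypothesis $j\le N-p-1$ is consumed.
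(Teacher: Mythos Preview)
Your proposal is correct and rests on the same idea as the paper: lift to the variational (prolonged) system, check that it again satisfies the hypotheses of Theorem~\ref{c.RKConv-unif} with $N$ reduced, and read off the derivative estimate from convergence of the lifted system; the identification of the Runge--Kutta map on the prolonged system with the $U$-derivative of $\Psi^h$ via uniqueness of the stage contraction is exactly what the paper uses as well. The only organizational difference is that the paper proceeds by induction on $j$, adjoining a single variational component $\tilde U = (U,\D\Phi^t(U^0)W^0)$ at each step (so $\tilde B(\tilde U) = (B(U),\D B(U)W)$ and $N$ drops by one per step), whereas you construct the full $j$-fold prolongation in one shot with Fa\`a di Bruno bookkeeping over all sub-tuples; the inductive route avoids that combinatorics and makes the verification of \textup{(B1)} for the extended nonlinearity a one-line observation, but your direct version is equally valid and uses the hypothesis $j\le N-p-1$ in the same place.
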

 
\begin{proof}
We proceed by induction over $j$.  The case $j=0$ is already asserted
by Theorem~\ref{c.RKConv-unif}.  When $j>0$, we note that
$\tilde{U}(t) \equiv (U(t), W(t)) \equiv (\Phi^t(U^0), \D\Phi^t(U^0)
W^0)$ satisfies
\begin{equation}
  \label{e.var-pde}
  \frac{\d}{\d t}{\tilde{U}}(t) 
  = \tilde{A} \tilde{U} + \tilde{B}(\tilde{U})
\end{equation}
where
\[
  \tilde{A} = 
  \begin{pmatrix}
    A & 0 \\ 0 & A
  \end{pmatrix} \,, \qquad
  \tilde{B}(\tilde{U}) = 
  \begin{pmatrix}
    B(U) \\ \D B(U) W
  \end{pmatrix} \,,
\]
and we take $W^0 \in \kB_k \equiv \interior(\kB_1^{\kY_k}(0))$.
Similarly, the Runge--Kutta method applied to \eqref{e.var-pde}
satisfies
\[
  \tilde\Psi^h (\tilde{U}^0) = 
  \begin{pmatrix}
    \Psi^h(U^0) \\ \D \Psi^h(U^0) W^0
  \end{pmatrix}
  \quad \text{where} \quad
  \tilde{U}^0 = 
  \begin{pmatrix}
    U^0 \\ W^0
  \end{pmatrix} \,.
\] 
Eq.~\eqref{e.var-pde} and the Runge--Kutta method applied to it again satisfy
(A1), (B1) with $N$ replaced by $N-1$ and $\kD_k$ replaced by
$\kD_k \times \kB_k$ for $k=0, \dots, K$ and (RK2).  We can therefore apply the
induction hypothesis to the extended system so long as $j+p\leq N-1$.
\end{proof}

%%%%%%%%%%%%%%%%%%%%%%%%%%%%%%%%%%%%%%%%%%%%%%%%%%%%%%%%%%%%%%%%%%%%%%%%%%%%%%
%%%%%%%%%%%%%%%%%%%%%%%%%%%%%%%%%%%%%%%%%%%%%%%%%%%%%%%%%%%%%%%%%%%%%%%%%%%%%%
%%%%%%%%%%%%%%%%%%%%%%%%%%%%%%%%%%%%%%%%%%%%%%%%%%%%%%%%%%%%%%%%%%%%%%%%%%%%%%
%%%%%%%%%%%%%%%%%%%%%%%%%%%%%%%%%%%%%%%%%%%%%%%%%%%%%%%%%%%%%%%%%%%%%%%%%%%%%%

\appendix
 \section{Contraction mappings on a scale of Banach spaces}
\label{s.appendix}

In the appendix we present a contraction mapping theorem on a scale of
Banach spaces, our main technical tool.  \mybf{Our results are more
general than precursor versions in \cite{Vand87,Wulff00}.}  The proofs
are \mybf{technically involved} for two reasons.  First, there is some
combinatorial complexity in the estimates due to the implicitness of
the fixed point of the contraction map. For this reason we decided to
derive estimates in all required norms at once.  Second, the maps we
consider have derivatives with respect to the parameters that are only
strongly continuous, but not continuous in the operator norm.  This
precludes a straightforward induction argument.  What we find is that
this weaker notion of continuity is entirely sufficient, but requires
some extra care and notational effort.

For $K \in \N_0$, let ${\kZ} = {\kZ}_0 \supset {\kZ}_1 \supset\ldots
\supset {\kZ}_{K}$ be a scale of Banach spaces, each continuously
embedded in its predecessor, and let $\kV_j, \kW_j \subset \kZ_j$ be
nested sequences of sets.  Let $\kX$ be a Banach space, and let $\kU
\subset \kX$ and $\kI \subset \R$ be open.  We note that all results
in this section easily extend to the case where $\kI$ is an open
subset of $\R^p$.  Without loss of generality, we may assume that
$\lVert w \rVert_{\kZ_j} \leq \lVert w \rVert_{\kZ_{j+1}}$ for all $w
\in \kZ_{j+1}$.  (If this is not the case, we inductively equip
$\kZ_{j+1}$ with the equivalent norm $\lVert \, \cdot \,
\rVert_{\kZ_{j+1}} + \lVert \, \cdot \, \rVert_{\kZ_{j}}$.)  

We use the following additional integer indices.  The minimal
regularity we guarantee for the image space of the function considered
is the regularity of the \emph{lowest scale index} $L$ of the image,
the \emph{loss index} $S$ indicates how many rungs on the scale the
range of a function is down relative to its domain, and $N$ denotes
the maximal regularity of the function.  We assume $0 \leq L \leq K-S
\leq N-S$.  Taking the dependence on parameters into account, we work
with the family of spaces
\[
  \kC_{N,K,L,S}(\{\kV_j\}, \kU,\kI; \{ \kW_j\}) =
  \bigcap_{\substack{i+j+k \leq N-S \\ 
                     L + \ell \leq k \leq K-S}}
  \kCb^{(\underline i, \underline j, \ell)}
    ({\kV}_{k+S} \times{ \kU} \times  \kI; \kW_{k-\ell}) \,,
\] 
endowed with norm 
\begin{gather*}
  \Norm{\Pi}{N,K,L,S} 
  = \max_{\substack{i+j+k \leq N-S \\ 
          L+\ell \leq k \leq K-S}}
    \norm{\D_w^i \D_u^j \partial_\mu^\ell \Pi}{\kL_\infty
          (\kV_{k+S} \times \kU \times \kI;
           \kE^i(\kZ_{k+S},\kE^j(\kX; \kZ_{k-\ell})))}
\end{gather*}
for $0 \leq L \leq K-S \leq N-S$, and abbreviate
\begin{gather*}
  \kC_{N,K,L}(\{\kV_j\}, \kU,\kI; \{ \kW_j\}) 
  = \kC_{N,K,L,0}(\{\kV_j\}, \kU,\kI; \{ \kW_j\}) \,, \\
  \kC_{N,K}(\{\kV_j\}, \kU,\kI; \{ \kW_j \}) 
  = \kC_{N,K,0,0}(\{\kV_j\}, \kU,\kI; \{ \kW_j\})
\end{gather*} 
with corresponding norms 
\begin{gather*}
  \Norm{\Pi}{N,K,L} = \Norm{\Pi}{N,K,L,0} \,, \\
  \Norm{\Pi}{N,K} = \Norm{\Pi}{N,K,0,0} \,.
\end{gather*}
Note that any function of class $\kC_{N,K,L,S}$ has a maximal number
of $N-L-S$ derivatives in its first and second argument on the lowest
admissible domain scale $\kZ_{L+S}$.

Furthermore, let
\[
  \kC_{N,K,L}( \kU,\kI; \{ \kW_j \})
  = \bigcap_{\substack{j+k \leq N \\ L+\ell \leq k \leq K}}
      \kCb^{(\underline j,\ell)} (\kU \times \kI; \kW_{k-\ell}) \,,
\]
endowed with norm 
\begin{gather}
  \label{e.norm3w}
  \norm{w}{N,K,L} 
  = \max_{\substack{j+k \leq N \\ 
          L+\ell \leq k \leq K}}
    \norm{\D_u^j \partial_\mu^\ell w}{\kL_\infty
          (\kU \times \kI; \kE^j(\kX; \kZ_{k-\ell}))} 
\end{gather}
for $0 \leq L \leq K \leq N$, where we abbreviate   
\[
  \kC_{N,K}( \kU,\kI; \{\kW_j\}) = \kC_{N,K,0}( \kU,\kI; \{\kW_j\})
\]
with corresponding norm 
\begin{gather*}
  \norm{w}{N,K} = \norm{w}{N,K,0} \,.
\end{gather*}
For future reference, we note the following.

\begin{remark} \label{r.no-w} 
When a map $\Pi \in \kC_{N,K,L,S}(\{\kV_j\}, \kU,\kI; \{ \kW_j\})$
does not depend on $w$, it can be interpreted as an element from
$\kC_{N,K,L}(\kU,\kI; \{ \kW_j\})$ where
\[
  \Norm{\Pi}{N,K,L,S} = \norm{\Pi}{N-S,K-S,L} \,.
\]
\end{remark}

We simply write $\kC_{N,K,L,S}$ and $\kC_{N,K,L}$ when the arguments
are unambiguous.  We also write
\[
  \partial_\mu \Pi(w(u,\mu);u,\mu) 
  = \partial_\mu \Pi(w;u,\mu) \big|_{w=w(u,\mu)}
  = (\partial_\mu \Pi \circ w)(u,\mu)
\] 
to denote partial $\mu$-derivatives vs.\ $\D_\mu
(\Pi(w(u,\mu),u,\mu))$ to denote full $\mu$-derivatives.

We begin with four short technical lemmas.  The first specifies the
relation between the spaces $\kC_{N,K}$ and $\kC^K$.
 
\begin{lemma} \label{l.cm-scale}
If $N>K$  then, with $\kW \equiv \kW_0$,
\[
  \kC_{N,K}(\kU,\kI;\{\kW_j\}) \subset \kCb^K (\kU \times \kI; \kW) \,.
\]
\end{lemma}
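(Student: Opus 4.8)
The plan is to check directly that every $w \in \kC_{N,K}(\kU,\kI;\{\kW_j\})$ has all mixed partial derivatives $\D_u^j\partial_\mu^\ell w$ of total order $j+\ell \le K$ existing, bounded, norm-continuous into $\kE^j(\kX,\kZ_0)$, and extending continuously to the boundary; since $\kI$ lives in the one-dimensional space $\R$, this is precisely the statement $w \in \kCb^K(\kU\times\kI;\kW_0)$, the full Fr\'echet derivatives on the product being reconstructed from these mixed partials in the standard way. So fix $j,\ell$ with $j+\ell \le K$. Taking the admissible index $(j,k,\ell) = (j,\ell,\ell)$ in the defining intersection --- admissible because $j+\ell \le K < N$ and $\ell \le \ell \le K$ --- gives $w \in \kCb^{(\underline j,\ell)}(\kU\times\kI;\kW_0)$, which already furnishes existence and boundedness of $\D_u^j\partial_\mu^\ell w$ and the continuous extension to the boundary of the associated \emph{evaluated} maps. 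The one thing not immediate is norm continuity (and the boundary statement in operator norm rather than merely strongly), and this is exactly where the hypothesis $N>K$ is used.

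For $j=0$ there is nothing more to do: by \eqref{e.cont-b} the above membership is the same as $w \in \kCb^{(0,\ell)}(\kU\times\kI;\kW_0)$, so $\partial_\mu^\ell w$ is norm-continuous into $\kZ_0$, bounded, and extends to the boundary. For $j \ge 1$ I would extract one extra derivative in each direction from the intersection. The index $(j+1,\ell,\ell)$ is admissible since $j+1+\ell \le K+1 \le N$ and $\ell \le \ell \le K$, so $\D_u^{j+1}\partial_\mu^\ell w$ exists and is bounded, say by $C_1$; hence $u \mapsto \D_u^j\partial_\mu^\ell w(u,\mu)$ is Fr\'echet differentiable into $\kE^j(\kX,\kZ_0)$ with derivative of norm at most $C_1$, thus locally $C_1$-Lipschitz in $u$ uniformly in $\mu$. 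The index $(j,\ell+1,\ell+1)$ is admissible too: $j+\ell+1 \le K+1 \le N$, and $\ell+1 \le K$ because $j \ge 1$ forces $\ell \le K-1$; so $\D_u^j\partial_\mu^{\ell+1} w$ exists and is bounded, say by $C_2$, making $\mu \mapsto \D_u^j\partial_\mu^\ell w(u,\mu)$ locally $C_2$-Lipschitz uniformly in $u$. Adding the two estimates shows $\D_u^j\partial_\mu^\ell w$ is locally jointly Lipschitz into the Banach space $\kE^j(\kX,\kZ_0)$, hence norm-continuous; being locally Lipschitz, it is uniformly continuous on the domain (a product of a ball and an interval, hence convex, in all applications) and so extends continuously to the boundary.

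Collecting this over all $j+\ell \le K$ and invoking the standard description of $\kCb^K$ of a map on the product of a Banach space and an interval in terms of its mixed partials gives $w \in \kCb^K(\kU\times\kI;\kW_0) = \kCb^K(\kU\times\kI;\kW)$. The step I expect to need the most care is the promotion of strong continuity to norm continuity together with the matching boundary statement: it cannot be done with only $K$ derivatives at hand, and the role of the assumption $N>K$ is exactly to supply the one extra derivative that turns the defining intersection's pointwise control into the uniform Lipschitz control in both the Banach-space and the scalar variable.
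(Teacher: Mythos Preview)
Your proof is correct and follows essentially the same approach as the paper: the paper also splits into the case $j=0$ (handled via \eqref{e.cont-b}) and $j\ge 1$ (handled via \eqref{e.cont-a}, whose content is precisely your Lipschitz-from-one-extra-derivative argument), using the same admissible indices $(j,\ell,\ell)$, $(j{+}1,\ell,\ell)$, $(j,\ell{+}1,\ell{+}1)$ that you identified. The only cosmetic difference is that the paper packages the promotion from strong to norm continuity into the pre-stated inclusion \eqref{e.cont-a} rather than writing out the Lipschitz estimate explicitly; your version has the virtue of making the role of convexity in the boundary extension visible.
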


\begin{proof} 
Let $w \in \kC_{N,K}(\kU,\kI;\{\kW_j\})$.  Fixing $\ell=k$ in the
definition of $\kC_{N,K}$ and recalling \eqref{e.cont-b}, i.e., strong
and uniform continuity coincide if no derivative in $u$ is taken, we
find that
\[
  w \in \kCb^{(0,K)}(\kU \times \kI;   \kW) 
  \cap \bigcap_{\substack{j+\ell \leq K+1 \\ \ell \leq K}}
  \kCb^{(\underline{j},\ell)}(\kU \times \kI;   \kW) \,.
\]
The claimed uniform continuity then holds because of \eqref{e.cont-a}.
\end{proof}

The following lemma captures   the essence of the inductive
step in $N$ as needed in the main results which follow.

\begin{lemma} \label{l.technical2} 
If $w \in \kC_{N,K,L}(\kU,\kI;\{\kW_j\})$ and the map
$(u,\tilde{u},\mu) \mapsto \D_u w(u,\mu) \tilde{u}$ is of class
$\kC_{N,K,L}(\kU \times \kB^\kX_1(0),\kI; \{\kZ_j\})$, then $w \in
\kC_{N+1,K,L}(\kU,\kI;\{\kW_j\})$ and
\[
  \norm{w}{N+1,K,L} 
  \leq \sup_{\|\tilde{u}\|_{\kX} \leq 1}
  \norm{\D_u w \, \tilde{u}}{N,K,L} + \norm{w}{N,K,L} \,.
\]
\end{lemma}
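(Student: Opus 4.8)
The plan is to verify, index triple by index triple, the conditions defining $\kC_{N+1,K,L}(\kU,\kI;\{\kW_j\})$, exploiting that the hypothesis on $\D_u w\,\tilde u$ encodes precisely one additional $u$-derivative of $w$. Recall that $\kC_{N+1,K,L}(\kU,\kI;\{\kW_j\})$ is the intersection of the spaces $\kCb^{(\underline j,\ell)}(\kU\times\kI;\kW_{k-\ell})$ over all triples $(j,k,\ell)$ with $j+k\le N+1$ and $L+\ell\le k\le K$, whereas $\kC_{N,K,L}$ is the same intersection with $j+k\le N$. Hence the only conditions I need to establish are those indexed by triples with $j+k=N+1$, and for any such triple $k\le K\le N<N+1=j+k$ forces $j\ge1$; so throughout I may take $j\ge1$.

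Fix such a triple $(j,k,\ell)$ and write $g(u,\tilde u,\mu)\equiv\D_u w(u,\mu)\,\tilde u$, regarded as a function of the combined first variable $(u,\tilde u)\in\kU\times\kB^\kX_1(0)$ and of $\mu\in\kI$, so that by hypothesis $g\in\kC_{N,K,L}(\kU\times\kB^\kX_1(0),\kI;\{\kZ_j\})$. The key point is the identity
\begin{equation}
  \label{e.t2key}
  \D_u^{j}\partial_\mu^{\ell'} w(u,\mu)(u_1,\dots,u_{j-1},\tilde u)
  = \D_u^{j-1}\partial_\mu^{\ell'} g(u,\tilde u,\mu)(u_1,\dots,u_{j-1}) \,,
\end{equation}
valid for all $\ell'\le\ell$, where on the right $\D_u$ denotes the partial derivative of $g$ in its first argument; the right-hand side exists by assumption, and \eqref{e.t2key} asserts that the left-hand side exists and equals it. Establishing \eqref{e.t2key} is the crux of the argument: one must be able to differentiate $w$ once more in $u$ and carry the regularity through the evaluation pairing $(\D_u w,\tilde u)\mapsto\D_u w\,\tilde u$, while having at one's disposal only the ``strong'' (pointwise-in-the-direction-vectors) continuity of iterated derivatives built into the $\kCb^{(\underline m,n)}$-classes, not operator-norm continuity of the top derivative. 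I would obtain it from the elementary rules for these classes, starting from $w(u+\tilde u,\mu)-w(u,\mu)=\int_0^1 g(u+t\tilde u,\tilde u,\mu)\,\d t$, differentiating $j-1$ times in $u$ under the integral, and using that $g$ is linear in $\tilde u$ to extract the term linear in $\tilde u$; this identifies, for fixed $u_1,\dots,u_{j-1}$, the right-hand side of \eqref{e.t2key} as the $\tilde u$-directional derivative of $u\mapsto\D_u^{j-1}\partial_\mu^{\ell'} w(u,\mu)(u_1,\dots,u_{j-1})$.

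Granting \eqref{e.t2key}, the remainder is bookkeeping. Since $(j-1)+k=N$ and $L+\ell\le k\le K$, the triple $(j-1,k,\ell)$ is admissible for $g$, so $g\in\kCb^{(\underline{j-1},\ell)}(\kU\times\kB^\kX_1(0)\times\kI;\kZ_{k-\ell})$; in particular, for every $i\le j-1$ and $\ell'\le\ell$, the order-$(i,\ell')$ iterated-derivative maps of $g$ are jointly continuous into $\kZ_{k-\ell}$, uniformly bounded, and extend continuously to the boundary. Restricting the remaining direction vectors to the first factor, setting $\tilde u=u_j$ in \eqref{e.t2key}, and noting that both sides of \eqref{e.t2key} are linear in $\tilde u$ (so the bound $\tilde u\in\kB^\kX_1(0)$ is no restriction), I conclude that the maps $(u,\mu,u_1,\dots,u_j)\mapsto\D_u^{j}\partial_\mu^{\ell'} w(u,\mu)(u_1,\dots,u_j)$, $\ell'\le\ell$, enjoy the same three properties into $\kZ_{k-\ell}$. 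The lower-order derivatives $\D_u^{i}\partial_\mu^{\ell'} w$ with $i\le j-1$ already do, because the triple $(j-1,k,\ell)$ is admissible for $w\in\kC_{N,K,L}$, and $w$ itself takes values in $\kW_{k-\ell}$; hence $w\in\kCb^{(\underline j,\ell)}(\kU\times\kI;\kW_{k-\ell})$. As the triple was arbitrary, $w\in\kC_{N+1,K,L}(\kU,\kI;\{\kW_j\})$.

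For the norm estimate I would run through the triples $(j,k,\ell)$ admissible for $\kC_{N+1,K,L}$. Those already admissible for $\kC_{N,K,L}$ contribute at most $\norm{w}{N,K,L}$. For the rest, necessarily with $j+k=N+1$ and $j\ge1$, identity \eqref{e.t2key} with $\ell'=\ell$, together with $\D_u^{j-1}\partial_\mu^\ell g(u,\tilde u,\mu)=\D_u^{j-1}\partial_\mu^\ell(\D_u w\,\tilde u)(u,\mu)$ for fixed $\tilde u$, gives for each $(u,\mu)$
\begin{align*}
  \norm{\D_u^{j}\partial_\mu^\ell w(u,\mu)}{\kE^j(\kX;\kZ_{k-\ell})}
  &= \sup_{\|\tilde u\|_{\kX}\le1}
     \norm{\D_u^{j-1}\partial_\mu^\ell(\D_u w\,\tilde u)(u,\mu)}
          {\kE^{j-1}(\kX;\kZ_{k-\ell})}\\
  &\le \sup_{\|\tilde u\|_{\kX}\le1}\norm{\D_u w\,\tilde u}{N,K,L} \,,
\end{align*}
where the last step uses that $(j-1,k,\ell)$ is admissible for $\kC_{N,K,L}$; taking the supremum over $(u,\mu)$, these triples contribute at most $\sup_{\|\tilde u\|_{\kX}\le1}\norm{\D_u w\,\tilde u}{N,K,L}$. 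Taking the maximum over all admissible triples,
\[
  \norm{w}{N+1,K,L}
  \le \max\Bigl\{\norm{w}{N,K,L},\,\sup_{\|\tilde u\|_{\kX}\le1}\norm{\D_u w\,\tilde u}{N,K,L}\Bigr\}
  \le \norm{w}{N,K,L}+\sup_{\|\tilde u\|_{\kX}\le1}\norm{\D_u w\,\tilde u}{N,K,L} \,.
\]
The one genuinely delicate point is the justification of \eqref{e.t2key} with only the weak continuity available; the index bookkeeping and the estimates above are routine.
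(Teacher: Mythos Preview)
Your argument is correct and follows the same route as the paper: partition the admissible index set for $\kC_{N+1,K,L}$ into the triples already covered by $w\in\kC_{N,K,L}$ and those with $j\ge1$, then shift $j\mapsto j-1$ and use the definition of the operator norm to identify the latter with the $\kC_{N,K,L}$-conditions on $\D_u w\,\tilde u$. The paper records this as a two-line index computation; you have written out the same idea carefully, and your worry about justifying \eqref{e.t2key} under the weak (strong-continuity) hypotheses is a fair point that the paper leaves implicit.
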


\begin{proof}
The claim is a direct consequence of the partitioning of the index set
in the definition of the $(N+1,K,L)$-norm, see \eqref{e.norm3w}, into
\[
  \{ 0 \leq j + k \leq N+1 \}
  = \{ 0 \leq j + k \leq N \} \cup \{ 0 \leq \tilde j + k \leq N \}
\]
where $\tilde j = j-1$, using the definition of the operator norm,
\[
  \norm{T}{\kE(\kX,\kY)} 
  = \sup_{\lVert x \rVert_\kX=1} \norm{Tx}{\kY} \,,
\]
and the definition of the $(N,K,L)$-norm \eqref{e.norm3w}.
\end{proof}

The next lemma captures the essence of the inductive step in $K$.
Namely, a scale of length $K+1$ can be broken up into two scales which
have only length $K$, plus a trivial remaining bit.
 
\begin{lemma} \label{l.technical1}
When $N>K$, $w \in \kC_{N,K+1,L+1}(\kU,\kI;\{\kW_j\}) \cap
\kC_{N,L,L}(\kU,\kI;\{\kW_j\})$, and $\partial_\mu w \in
\kC_{N-1,K,L}(\kU,\kI;\{\kZ_j\})$,
 then $w \in \kC_{N,K+1,L}(\kU,\kI;\{\kW_j\})$ and
\[
  \norm{w}{N,K+1,L} 
  \leq \norm{w}{N,K+1,L+1} 
       + \norm{w}{N,L,L}
       + \norm{\partial_\mu w}{N-1,K,L} \,.
\]
\end{lemma}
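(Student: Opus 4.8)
The plan is to argue purely by combinatorics on index sets, in the spirit of Lemma~\ref{l.technical2}. By \eqref{e.norm3w}, membership of $w$ in $\kC_{N,K+1,L}(\kU,\kI;\{\kW_j\})$ means exactly that $w\in\kCb^{(\underline j,\ell)}(\kU\times\kI;\kW_{k-\ell})$ for every triple $(j,k,\ell)$ of non-negative integers with $j+k\leq N$ and $L+\ell\leq k\leq K+1$, and that $\norm{w}{N,K+1,L}$ is the maximum of the norms $\norm{\D_u^j\partial_\mu^\ell w}{\kL_\infty(\kU\times\kI;\kE^j(\kX;\kZ_{k-\ell}))}$ over those triples. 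I would partition the admissible triples according to the value of the image rung $m\equiv k-\ell\geq L$.

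If $m\geq L+1$, then $L+1+\ell\leq k$, so the triple lies in the index set of $\kC_{N,K+1,L+1}$; the required regularity and the bound $\norm{\D_u^j\partial_\mu^\ell w}{\kL_\infty}\leq\norm{w}{N,K+1,L+1}$ are then given by the first hypothesis. If $m=L$ and $\ell=0$, then $k=L$, and the triples $(j,L,0)$ with $j\leq N-L$ form precisely the index set of $\kC_{N,L,L}$, so these are covered by the second hypothesis with bound $\norm{w}{N,L,L}$. The remaining case is $m=L$ with $\ell\geq1$; here $k=L+\ell\geq L+1$, and from $k\leq K+1$ and $j+k\leq N$ we get $k-1\leq K$ and $j+(k-1)\leq N-1$. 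I would rewrite $\D_u^j\partial_\mu^\ell w=\D_u^j\partial_\mu^{\ell-1}(\partial_\mu w)$ and apply the third hypothesis $\partial_\mu w\in\kC_{N-1,K,L}(\kU,\kI;\{\kZ_j\})$ to the shifted triple $(j,k-1,\ell-1)$: it is admissible for $\kC_{N-1,K,L}$ because $L+(\ell-1)=k-1$ (with equality, since $k=L+\ell$), $k-1\leq K$, and $j+(k-1)\leq N-1$, and its image rung $(k-1)-(\ell-1)=m=L$ matches. Thus $\D_u^i\partial_\mu^\ell w$ exists for $i=0,\dots,j$ with exactly the continuity, boundedness, and boundary-extension required by the definition of $\kCb^{(\underline j,\ell)}(\kU\times\kI;\kW_L)$, and $\norm{\D_u^j\partial_\mu^\ell w}{\kL_\infty}\leq\norm{\partial_\mu w}{N-1,K,L}$.

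Since $N>K$ ensures the standing constraint $L\leq K+1\leq N$, the three cases exhaust the index set of $\kC_{N,K+1,L}$, whence $w\in\kC_{N,K+1,L}(\kU,\kI;\{\kW_j\})$; taking the maximum over all triples of the case-by-case bounds gives $\norm{w}{N,K+1,L}\leq\max\{\norm{w}{N,K+1,L+1},\norm{w}{N,L,L},\norm{\partial_\mu w}{N-1,K,L}\}$, which is at most the asserted sum. There is essentially no analytic content here; the one step deserving care — and the one I would check most closely — is the index bookkeeping in the shifted case, namely that the inequality $L+\ell\leq k$ degenerates to an equality precisely when $k-\ell=L$, which is what makes $(j,k-1,\ell-1)$ admissible for $\kC_{N-1,K,L}$, together with the routine verification that the continuity and boundedness demanded of $w\in\kCb^{(\underline j,\ell)}$ are inherited verbatim from those of $\partial_\mu w\in\kCb^{(\underline j,\ell-1)}$.
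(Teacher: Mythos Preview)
Your proof is correct and follows essentially the same approach as the paper's. Both arguments are pure index bookkeeping on the admissible triples $(j,k,\ell)$; the paper first reduces to $L=0$ by translating the scale and then covers $\{0\le\ell\le k\le K+1\}$ by the three (overlapping) sets $\{\ell<k\}$, $\{\ell\ge1\}$, $\{k=\ell=0\}$, while you work directly for general $L$ and partition by the image rung $m=k-\ell$ into the disjoint cases $m\ge L+1$, $m=L$ with $\ell=0$, and $m=L$ with $\ell\ge1$ --- these are the same three pieces up to the harmless overlap in the paper's version.
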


\begin{proof}
Translating the scale, i.e., setting $\tilde\kZ_j = \kZ_{j+L}$,
$\tilde{K}=K-L$, and $\tilde N = N-L$, we can reduce to the case $L=0$.
Since
\[
  \{ 0\leq \ell \leq k \leq K+1 \} 
  = \{ 0\leq  \ell < k \leq K+1 \} 
    \cup \{ 1\leq \ell \leq k \leq K+1 \} 
    \cup \{k=\ell=0 \}
\]
and $\partial_\mu w \in \kC_{N-1,K}$ if and only if
\[
  w \in \bigcap_{\substack{j+k \leq N-1 \\ \ell \leq k \leq K}}
    \kCb^{(\underline j, \ell+1)} (\kU \times \kI; \kW_{k-\ell})
  = \bigcap_{\substack{j+k \leq N \\ 1 \leq \ell \leq k \leq K+1}}
    \kCb^{(\underline j, \ell)} (\kU \times \kI; \kW_{k-\ell}) \,,
\]
the claim follows directly from definition of $\kC_{N,K,L}$ and its
norm \eqref{e.norm3w}.
\end{proof}
 
Finally, we prove that the space $\kC_{N,K,0,S}$ can be expressed in
terms of $\kC_{N,K}$-type spaces with domains defined on a scale.

\begin{lemma} \label{l.technical3}
We have
\[
  \bigcap_{S \leq \kappa\leq K} \kC_{N-S,\kappa-S,L}(\kV_\kappa \times
    \kU, \kI; \{ \kW_j\})
  = \kC_{N,K,L,S}(\{\kV_j\},\kU,\kI; \{\kW_j\}) \,,
\]
and
\[
  \Norm{\Pi}{\kC_{N,K,L,S}(\{\kV_j\},\kU;\kI;\{ \kW_j \})} 
  \sim \max_{S \leq \kappa \leq K}
       \norm{\Pi}{\kC_{N-S,\kappa-S,L}(\kV_\kappa \times \kU,\kI;
                  \{ \kW_j \})} \,,
\]
where $\sim$ denotes that left hand and right hand sides provide
equivalent norms on $\kC_{N,K,L,S}$.
\end{lemma}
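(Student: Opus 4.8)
The plan is to prove the asserted set equality by a double inclusion carried out entirely at the level of the index tuples that define the two families of intersections, and then to read the norm equivalence off the same correspondence together with the elementary bounds relating a joint Fr\'echet derivative to its mixed partials. There is no analytic difficulty here: in both $\kC_{N,K,L,S}$ and $\kC_{N-S,\kappa-S,L}(\kV_\kappa\times\kU,\kI;\cdot)$ the underlined slots get fully ``opened'' when continuity is checked and the remaining ($\mu$-)slot is finite-dimensional, so only joint continuity of completely evaluated derivatives is ever at issue and the weak-versus-norm subtleties that motivate the $\underline{(\cdot)}$-notation never arise.

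For the inclusion $\bigcap_{S\le\kappa\le K}\kC_{N-S,\kappa-S,L}(\kV_\kappa\times\kU,\kI;\{\kW_j\})\subseteq\kC_{N,K,L,S}(\{\kV_j\},\kU,\kI;\{\kW_j\})$ I would fix $\Pi$ in the left-hand intersection and an admissible quadruple $(i,j,k,\ell)$ for the right-hand space, i.e.\ $i+j+k\le N-S$ and $L+\ell\le k\le K-S$. Setting $\kappa=k+S$ gives $S\le\kappa\le K$, so $\Pi\in\kC_{N-S,\kappa-S,L}(\kV_\kappa\times\kU,\kI;\{\kW_j\})$; invoking the defining intersection of this space with the pair $(j',k')=(i+j,k)$ — admissible since $j'+k'=i+j+k\le N-S$ and $L+\ell\le k'=k=\kappa-S$ — yields $\D^{i+j}_{(w,u)}\partial_\mu^\ell\Pi\in\kCb^{(\underline{i+j},\ell)}(\kV_{k+S}\times\kU\times\kI;\kW_{k-\ell})$. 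Since the mixed partial $\D_w^i\D_u^j\partial_\mu^\ell\Pi$ arises from this joint derivative by evaluating direction arguments at vectors of the form $(w_1,0),\dots,(w_i,0),(0,u_1),\dots,(0,u_j)$, a norm-non-increasing and continuous operation, it inherits boundedness, joint (strong) continuity and continuous extension to the boundary; hence $\Pi\in\kCb^{(\underline i,\underline j,\ell)}(\kV_{k+S}\times\kU\times\kI;\kW_{k-\ell})$. As the quadruple was arbitrary admissible, $\Pi\in\kC_{N,K,L,S}$.

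For the reverse inclusion I would fix $\Pi\in\kC_{N,K,L,S}$, fix $\kappa$ with $S\le\kappa\le K$, and fix an admissible triple $(j',k',\ell)$ for $\kC_{N-S,\kappa-S,L}(\kV_\kappa\times\kU,\kI;\cdot)$, i.e.\ $j'+k'\le N-S$ and $L+\ell\le k'\le\kappa-S$. Expanding a joint derivative $\D^a_{(w,u)}\partial_\mu^c\Pi$, with $a\le j'$ and $c\le\ell$, by multilinearity into the $2^a$ terms built from $\D_w^{a_1}\D_u^{a_2}\partial_\mu^c\Pi$ with $a_1+a_2=a$, each quadruple $(a_1,a_2,k',\ell)$ is admissible for $\kC_{N,K,L,S}$ because $a_1+a_2+k'=a+k'\le j'+k'\le N-S$ and $L+\ell\le k'\le\kappa-S\le K-S$; hence each term lies in $\kCb^{(\underline{a_1},\underline{a_2},\ell)}(\kV_{k'+S}\times\kU\times\kI;\kW_{k'-\ell})$. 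Because $\kappa\ge k'+S$, the sets are nested as $\kV_\kappa\subseteq\kV_{k'+S}$ and the embedding $\kZ_\kappa\hookrightarrow\kZ_{k'+S}$ has norm at most one, so each term restricts to $\kV_\kappa\times\kU\times\kI$ with direction arguments in $\kZ_\kappa$ with no loss of boundedness or of (strong and boundary) continuity; summing, $\D^a_{(w,u)}\partial_\mu^c\Pi$ has the required continuity into $\kZ_{k'-\ell}$. Thus $\Pi\in\kCb^{(\underline{j'},\ell)}(\kV_\kappa\times\kU\times\kI;\kW_{k'-\ell})$, and since the triple and $\kappa$ were arbitrary, $\Pi$ lies in the left-hand intersection.

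The norm equivalence follows from the same two steps applied to the supremum-defining operator norms: on one hand $\norm{\D_w^i\D_u^j(\,\cdot\,)}{\kE^i(\kZ_\kappa,\kE^j(\kX,\kZ_{k-\ell}))}\le\norm{\D^{i+j}_{(w,u)}(\,\cdot\,)}{\kE^{i+j}(\kZ_\kappa\times\kX,\kZ_{k-\ell})}$ because inserting zeros does not increase the norm, for any fixed product norm on $\kZ_\kappa\times\kX$; on the other hand $\norm{\D^{j'}_{(w,u)}(\,\cdot\,)}{}\le 2^{j'}\max_{a_1+a_2=j'}\norm{\D_w^{a_1}\D_u^{a_2}(\,\cdot\,)}{}$ by the multilinear expansion, and each right-hand factor is bounded by a term of $\Norm{\Pi}{N,K,L,S}$ using $\kV_\kappa\subseteq\kV_{k'+S}$ and $\kZ_\kappa\hookrightarrow\kZ_{k'+S}$ exactly as above. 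Since every derivative order occurring is at most $N-S$, the implied constants depend only on $N-S$, which is the asserted $\sim$. The only point needing care — and the likely source of index errors if one is hasty — is keeping the two parametrizations aligned: the quadruple $(i,j,k,\ell)$ for $\kC_{N,K,L,S}$ against, for each fixed $\kappa$, the triple $(j',k',\ell)$ for $\kC_{N-S,\kappa-S,L}(\kV_\kappa\times\kU,\kI;\cdot)$, making sure the domain-scale index $k+S$ on one side is exactly the fixed rung $\kappa$ on the other (which is what forces $S\le\kappa\le K$) and that the constraint $L+\ell\le k\le K-S$ is respected at both endpoints $k=L+\ell$ and $k=K-S$.
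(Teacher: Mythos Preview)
Your proposal is correct and takes essentially the same approach as the paper: both arguments reduce to matching the index sets that parametrize the two intersections, using the nesting $\kV_\kappa\subseteq\kV_{k'+S}$ for $\kappa\ge k'+S$, and invoking the elementary equivalence between joint Fr\'echet derivatives $\D_{(w,u)}^{j'}$ and sums of mixed partials $\D_w^{a_1}\D_u^{a_2}$ for the norm statement. The only cosmetic differences are that the paper first translates the scale to reduce to $L=0$ and then writes the index manipulation as a single chain of equalities of intersections (collapsing the redundant parameter $\tilde k=\kappa-S$), whereas you carry $L$ throughout and argue by double inclusion; neither choice affects the substance.
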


\begin{proof}
Translating the scale, i.e., setting $\tilde\kZ_j = \kZ_{j+L}$,
$\tilde{K}=K-L$, and $\tilde N = N-L$, we can reduce to the case
$L=0$.  Next, we identify
\begin{align*}
  \bigcap_{S \leq \kappa\leq K} 
    \kC_{N-S,\kappa-S}(\kV_\kappa \times \kU;\kI, \{ \kW_j\})
  & = \bigcap_{\substack{S \leq \kappa\leq K\\j+k \leq N-S \\ 
               \ell \leq k \leq \kappa-S}}
      \kCb^{(\underline j, \ell)} 
        ((\kV_\kappa\times \kU)\times \kI; \kW_{k-\ell}) \\
  & = \bigcap_{\substack{S \leq \kappa\leq K\\i+j+k \leq N-S \\ 
               \ell \leq k \leq \kappa-S}}
      \kCb^{(\underline i,\underline j, \ell)} 
        (\kV_\kappa\times \kU\times \kI; \kW_{k-\ell}) \\
  & = \bigcap_{\substack{0 \leq \tilde{k}\leq K-S\\i+j+k \leq N-S \\ 
               \ell \leq k \leq \tilde{k}}}
      \kCb^{(\underline i,\underline j, \ell)} 
        (\kV_{\tilde{k}+S} \times \kU\times \kI; \kW_{k-\ell}) \\
  & = \bigcap_{\substack{i+j+k \leq N-S \\ 
               \ell \leq k \leq K-S}}
      \kCb^{(\underline i, \underline j, \ell)}
        ({\kV}_{k+S} \times{ \kU} \times  \kI; \kW_{k-\ell}) \,,
\end{align*}
which equals $\kC_{N,K,0,S}$.  Noting that
\begin{multline*}
  \max_{\substack{S \leq \kappa \leq K \\ j+k \leq N-S \\
            \ell \leq k \leq \kappa - S}}
      \norm{\D_{(w,u)}^j \partial_\mu^\ell \Pi}{\kL_\infty
            (\kV_\kappa \times \kU \times \kI; 
             \kE^j(\kZ_\kappa \times \kX; \kZ_{k-\ell}))} \\
  \sim \max_{\substack{S \leq \kappa \leq K \\ i+j+k \leq N-S \\
            \ell \leq k \leq \kappa - S}}
      \norm{\D_w^i \D_u^j \partial_\mu^\ell \Pi}{\kL_\infty
            (\kV_\kappa \times \kU \times \kI; 
             \kE^i(\kZ_\kappa; \kE^j(\kX; \kZ_{k-\ell})))} \,,
\end{multline*}
the statement about the norms follows analogously.
\end{proof}

The next lemma will be our main tool for obtaining estimates on the
scale of Banach spaces for compositions of maps of the form
\[
  (\Pi\circ w)(u,\mu) \equiv \Pi(w(u,\mu);u,\mu) \,.
\]
The essence of the result is very natural: When the outer function
$\Pi$ loses $S$ rungs on the scale, the inner function $w$ must have
minimal regularity $L=S$ and the composition maps at best into  
rung $K-S$.

The main difficulty in the proof of this lemma and of the subsequent
results is that the maps considered lose smoothness when derivatives
in $\mu$ are taken.  In particular, these derivatives are only
strongly continuous with respect to the parameters $u$ and $\mu$ and,
in our infinite-dimensional setting, are discontinuous with respect to
$u$ and $\mu$ in the operator norm.  As a result, in the proofs below
the induction hypothesis cannot be applied to the derivatives in a
straightforward way.

\begin{lemma}[Chain rule on a scale of Banach spaces]
\label{l.cm-composition} 
Let $\Pi = \Pi(w;u,\mu)$ and $w=w(u,\mu)$ satisfy
\[
  \Pi \in \kC_{N,K,L,S}(\{\kW_j\}, \kU, \kI; \{\kZ_j\})
  \quad \text{and} \quad
  w \in \kC_{N,K,S+L} (\kU, \kI; \{\kW_j\}) \,.
\]
Then $\Pi \circ w \in \kC_{N-S,K-S,L} (\kU,\kI; \{\kZ_j\})$ and
$\lVert \Pi\circ w \rVert_{N-S,K-S,L}$ can be bounded by a polynomial
with non-negative coefficients in $\Norm{\Pi}{N,K,L,S}$ and
$\lVert w \rVert_{N,K,S+L}$. 
\end{lemma}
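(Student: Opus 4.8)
The plan is to reduce to the case $L=0$, expand $\D_u^j\partial_\mu^\ell(\Pi\circ w)$ by a Fa\`a di Bruno formula, check termwise that every summand lands in the correct rung of the scale, and treat continuity separately. First I would translate the scale — replacing $\kW_j,\kZ_j$ by $\kW_{j+L},\kZ_{j+L}$ and $(N,K)$ by $(N-L,K-L)$, exactly as in the proofs of Lemmas~\ref{l.technical1}--\ref{l.technical3} — which reduces the claim to $L=0$. It then remains to show that for every triple $(j,k,\ell)$ with $j+k\le N-S$ and $\ell\le k\le K-S$, the derivative $\D_u^j\partial_\mu^\ell(\Pi\circ w)$ exists, is bounded into $\kZ_{k-\ell}$, is strongly continuous in its $j$ direction arguments, and extends continuously to the boundary.

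Next I would establish the Fa\`a di Bruno formula by applying the iterated Fr\'echet chain rule (induction on $j+\ell$, differentiating one parameter at a time, using only the existence and continuity of the partial derivatives of $\Pi$ and $w$ guaranteed by their $\kCb^{(\underline{\,\cdot\,},\cdot)}$-memberships): $\D_u^j\partial_\mu^\ell(\Pi\circ w)$ is a finite combination, with universal non-negative integer coefficients, of terms
\[
  \D_w^i\D_u^a\partial_\mu^b\Pi\bigl(w(u,\mu);u,\mu\bigr)
  \bigl[\D_u^{a_1}\partial_\mu^{b_1}w(u,\mu),\ldots,\D_u^{a_i}\partial_\mu^{b_i}w(u,\mu)\bigr]
\]
with $i\ge0$, each $(a_r,b_r)\ne(0,0)$, $a+\sum_r a_r=j$ and $b+\sum_r b_r=\ell$; here the first factor denotes the $a$-linear map on $\kX$ left after the $i$ insertions, which receives $a$ of the $j$ direction vectors while the $r$-th $w$-factor receives $a_r$ of them. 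For this formula the strong-continuity refinement plays no role; only the genuine Fr\'echet differentiability afforded by the $\kCb$-classes is used.

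The core step is the scale bookkeeping: for each such term I would read off $\Pi$ and its derivatives at the working rung $k':=k-\ell+b$. Then $\Pi\in\kC_{N,K,0,S}$ applies at rung $k'$, since $b\le k'\le K-S$ (because $0\le\ell-b$ and $b\le\ell\le k\le K-S$) and $i+a+k'\le N-S$ (because $i\le\sum_r(a_r+b_r)=(j-a)+(\ell-b)$ yields $i+a+k'\le j+k\le N-S$); hence $\D_w^i\D_u^a\partial_\mu^b\Pi$ is a bounded function of $(w,u,\mu)$ valued in $\kE^i(\kZ_{k'+S},\kE^a(\kX,\kZ_{k'-b}))$ with $k'-b=k-\ell$. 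Similarly, each inner factor $\D_u^{a_r}\partial_\mu^{b_r}w$ lies in $\kW_{k'+S}$ with bounded $\kZ_{k'+S}$-norm: from $w\in\kC_{N,K,S}$ this amounts to $a_r+b_r\le N-S-k'$ and $b_r\le K-S-k'$, both of which follow from $b_r\le\ell-b$, $a_r+b_r\le(j-a)+(\ell-b)$, $j+k\le N-S$ and $k\le K-S$. Substituting $w(u,\mu)$ and the factors $\D_u^{a_r}\partial_\mu^{b_r}w(u,\mu)$ into $\D_w^i\D_u^a\partial_\mu^b\Pi$ and summing the finitely many terms then gives the value in $\kZ_{k-\ell}$ together with a polynomial bound, with non-negative coefficients, on $\norm{\Pi\circ w}{N-S,K-S,L}$ in terms of $\Norm{\Pi}{N,K,L,S}$ and $\norm{w}{N,K,S+L}$ (the polynomial structure being preserved under the scale translation).

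I expect the main obstacle to be continuity. As the discussion preceding the lemma notes, the $\mu$-derivatives of $\Pi$ and $w$ are only strongly, not norm, continuous in $(u,\mu)$, so one cannot conclude that a norm-continuous multilinear map of norm-continuous arguments is continuous, and an induction hypothesis cannot be applied to the $\mu$-derivatives of the composition. I would instead argue termwise on the explicit Fa\`a di Bruno expansion: with the $j$ direction vectors held fixed, $(u,\mu)\mapsto\D_u^{a_r}\partial_\mu^{b_r}w(u,\mu)[\,\cdot\,]$ is genuinely norm-continuous into $\kZ_{k'+S}$ — precisely the strong continuity contained in $w\in\kCb^{(\underline{a_r},b_r)}(\kU\times\kI;\kW_{k'+S})$, the $b_r$ $\mu$-derivatives being honest derivatives — while $(w,u,\mu,\xi_1,\ldots,\xi_i,\eta_1,\ldots,\eta_a)\mapsto\D_w^i\D_u^a\partial_\mu^b\Pi(w;u,\mu)[\xi_1,\ldots,\xi_i][\eta_1,\ldots,\eta_a]$ is jointly continuous by the strong continuity in $\Pi\in\kCb^{(\underline i,\underline a,b)}$. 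Composing these with the continuous map $(u,\mu)\mapsto w(u,\mu)$ shows each term, and hence the whole finite sum, is strongly continuous in $(u,\mu)$ jointly with its direction vectors; the continuous extension to the boundary follows in the same way from the corresponding properties of $\Pi$ and $w$. This establishes $\Pi\circ w\in\kC_{N-S,K-S,L}(\kU,\kI;\{\kZ_j\})$ and completes the plan.
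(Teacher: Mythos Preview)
Your approach is correct and genuinely different from the paper's. Both begin by translating the scale to reduce to $L=0$, but from there the arguments diverge. The paper proceeds by a double induction on $N$ and $K$: the step in $N$ differentiates $\Pi\circ w$ once in $u$, writes $\D_u(\Pi\circ w)\tilde u$ as $(\partial_u\Pi\circ w)\tilde u + (\partial_w\Pi\circ w)\,\D_u w\,\tilde u$, treats each term as a new composition to which the induction hypothesis applies, and closes the step via Lemma~\ref{l.technical2}; the step in $K$ does the analogous thing with $\partial_\mu(\Pi\circ w)$ and Lemma~\ref{l.technical1}. In contrast, you expand $\D_u^j\partial_\mu^\ell(\Pi\circ w)$ all at once via Fa\`a di Bruno, then verify termwise (for each triple $(j,k,\ell)$ in the admissible range) that every summand maps into $\kZ_{k-\ell}$ with a bound polynomial in the data, and finally establish strong continuity term by term using the joint continuity of $\D_w^i\D_u^a\partial_\mu^b\Pi$ in all its direction arguments. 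The paper actually anticipates your route in the remark following Lemma~\ref{l.DwPiClass}, noting that Fa\`a di Bruno can be used but opting for induction as ``more manageable.'' Your argument buys a more direct, fully explicit polynomial bound without threading through the auxiliary Lemmas~\ref{l.technical1}--\ref{l.technical3}; the paper's argument is more modular and sidesteps the multilinear combinatorics.

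One point you should make more explicit: when you iterate the chain rule to derive Fa\`a di Bruno, the product-rule step for expressions of the form $M(u,\mu)[\xi_1(u,\mu),\dots,\xi_i(u,\mu)]$ involves a cross term $\tfrac1h[M(u,\mu+h)-M(u,\mu)][\xi_r(u,\mu+h)-\xi_r(u,\mu)]$ whose vanishing requires the difference quotients of $M$ to be bounded in operator norm; this follows from strong differentiability via the uniform boundedness principle, but it is not quite ``only the genuine Fr\'echet differentiability'' as you phrase it. This is a small technicality, not a gap.
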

 
\begin{proof}
Translating the scale, i.e., setting $\tilde\kZ_j = \kZ_{j+L}$,
$\tilde{K}=K-L$, and $\tilde N = N-L$, we can reduce to the case $L=0$
as in the proof of Lemma~\ref{l.technical1}.  We proceed by induction
in $N$ and $K$ as follows.

For $N=K=S$, we have $\Pi \in \mybf{\kCb}(\kW_S\times \kU\times\kI; \kZ_0)$,
$w \in \mybf{\kCb}(\kU\times\kI;\kW_S)$, hence $\Pi \circ w \in \mybf{\kCb} (\kU,\kI;
\kZ_0)$ with bound $\Norm{\Pi}{S,S,0,S}$.
 
Let us now increment $N$ holding $K$ and $S$ fixed.  Let $\kB \equiv
\kB_1^{\kX}(0)$.  We claim that the map
\begin{equation}
  \label{e.DuPi}
  (u, \tilde u, \mu) \mapsto \D_u (\Pi \circ w) \, \tilde u
  \quad \text{is of class} \quad
  \kC_{N-S,K-S}(\kU \times \kB, \kI; \{\kZ_j\}) 
\end{equation}
with a bound which is a polynomial in $\Norm{\Pi}{N+1,K,0,S}$ and
$\lVert w \rVert_{N+1,K,S}$.  The inductive step is achieved by
Lemma~\ref{l.technical2} which then asserts that $\Pi \circ w \in
\kC_{N+1-S,K-S}$ with its norm bounded as required.

To prove this claim, let $\tilde{u} \in \kB$, write
\begin{equation}
  \D_u (\Pi \circ w)(u,\mu) \, \tilde{u}
  = \partial_u \Pi (w(u,\mu); u, \mu) \, \tilde{u}
    + \partial_w \Pi (w(u,\mu); u, \mu) \,
      \D_u w(u,\mu) \, \tilde{u} \,,
  \label{e.du-chainrule}
\end{equation}
and consider each term on the right of \eqref{e.du-chainrule}
separately.  For the first term on the right, set $\hat{u} \equiv
(u,\tilde{u}) \in \kU \times \kB \equiv \hat\kU$ and define
\begin{equation}
  \label{e.Pi1a}
  \Pi_1 (w; \hat{u}, \mu) 
  = \partial_u \Pi (w; u, \mu) \, \tilde{u} \,.
\end{equation}
By assumption, this map is of class $\kC_{N,K,0,S} (\{\kW_j\},
\kU\times\kB, \kI; \{\kZ_j\})$ .  The induction hypothesis, applied to
the maps $\Pi_1$ and $w$, then asserts that
\begin{equation}
  \label{e.Pi1b}
  \Pi_1 \circ w \in \kC_{N-S,K-S}(\kU\times\kB, \kI; \{\kZ_j\})
\end{equation}
and that its $\kC_{N-S, K-S}$-norm is bounded by a polynomial with
non-negative coefficients in $\Norm{\Pi}{N+1,K,0,S} \geq
\Norm{\Pi_1}{N,K,0,S}$ and $\lVert w \rVert_{N,K,S}$.
 
For the second term on the right of \eqref{e.du-chainrule}, we must
proceed in stages.  Fix $r = \lVert w \rVert_{N+1,K,S}$ and let
$\kV_\kappa = \kB_r^{\kZ_\kappa}(0)$.  For $\kappa = S, \dots, K$ and
$(u,\hat{w}) \in \kU \times \kV_\kappa$, we set
\[
  \Pi_2(w; (u, \hat{w}), \mu) 
  = \D_w \Pi (w,u,\mu) \, \hat{w} \,.
\]
By assumption, this map is of class $\kC_{N,\kappa,0,S}(\{\kW_j\}, \kU
\times \kV_\kappa, \kI; \{\kZ_j\})$.  The induction hypothesis,
applied to the maps $\Pi_2$ and $w$, then asserts that
\begin{equation}
  \Pi_2 \circ w 
  \in \kC_{N-S, \kappa-S} (\kU \times \kV_\kappa, \kI;  \{\kZ_j\}) 
  \label{e.pi2class}
\end{equation}
and that its $\kC_{N-S, \kappa-S}$-norm is bounded by a polynomial in
$\lVert w \rVert_{N,K,S} \geq \lVert w \rVert_{N,\kappa,S}$ and
\[
  \Norm{\Pi}{N+1,K,0,S} \, 
  \sup_{\hat{w} \in \kV_\kappa} \norm{\hat{w}}{\kZ_\kappa} 
  \geq \Norm{\Pi_2}{\kC_{N,\kappa,0,S} 
         (\{ \kW_j \}, \kU \times \kV_\kappa,\kI; \{ \kZ_j \})} \,.
\]
We now consider the composition $\Pi_2 \circ w$ as a map 
\[
  \hat \Pi (\hat w; u, \mu) 
  = \partial_w \Pi (w(u,\mu);u,\mu) \, \hat w \,.
\]
Recalling that \eqref{e.pi2class} applies for all $\kappa = S, \dots,
K$, we can apply Lemma~\ref{l.technical3} to obtain that
\[
  \hat \Pi \in \kC_{N,K,0,S} (\{\kV_j\}, \kU,\kI;\{\kZ_j\})
\]
and that its norm is bounded by a polynomial in $r \,
\Norm{\Pi}{N+1,K,0,S}$ and $\tnorm{w}{N,K,S}$.  (This is summarized in
Lemma~\ref{l.DwPiClass} for later use.)

Now consider $\hat \Pi$ as a function of $\hat{w}$, $\hat{u} =
(u,\tilde{u})\in \kU\times\kB$, and $\mu$.  Since
\[
  \norm{\D_uw(u,\mu) \, \tilde u}{\kCb(\kU \times \kI; \kZ_j)}
  \leq \norm{w}{N+1,K,S} \, 
       \norm{\tilde u}\kX \leq r 
\]
for $j=S,\dots,K$, the function $\hat w (\hat u, \mu) = \D_u w(u,\mu)
\, \tilde u$ is of class $\kC_{N,K,S} (\kU\times \kB, \kI;
\{\kV_j\})$.  Applying the induction hypothesis to $\hat \Pi$ and
$\hat w$, we conclude that $\hat \Pi \circ \hat w$ or, written
explicitly, the map
\[
  ((u,\tilde{u}),\mu) \mapsto 
  \partial_w \Pi (w(u,\mu); u, \mu) \, \D_u w (u, \mu) \, \tilde{u}
\]
is of class $\kC_{N-S,K-S}(\kU\times \kB,\kI; \{\kZ_j\})$, with norm
bounded by an increasing polynomial in $\Norm{\Pi}{N+1,K,0,S}$ and
$\tnorm{w}{N+1,K,S} \geq \tnorm{\hat{w}}{N,K,S}$.  Due to
\eqref{e.du-chainrule}, \eqref{e.Pi1a}, and \eqref{e.Pi1b}, this also
holds for the map $((u,\tilde{u}),\mu) \mapsto \D_u \Pi (w(u,\mu); u,
\mu) \, \tilde{u}$, thus proves our claim \eqref{e.DuPi}; the
inductive step in $N$ is complete.

Next, we increment $K-S$ keeping $N$ fixed.  Here the inductive step
will be achieved by Lemma~\ref{l.technical1}; we must hence verify its
assumptions.  First, applying the induction hypothesis on the scale
$\tilde \kZ_j = \kZ_{j+1}$ with $j=0,\ldots, K$, we infer that
\[ 
  \Pi \circ w \in \kC_{N-1-S,K-S}(\kU,\kI; \{\tilde\kZ_j\}) =  
  \kC_{N-S,K+1-S,1}(\kU,\kI; \{\kZ_j\})
\]
with the corresponding norm bounded by a polynomial with non-negative
coefficients in $\Norm{\Pi}{N,K+1,0,S} \geq \Norm{\Pi}{N,K+1,1,S+1}$
and $\tnorm{w}{N,K+1,S+1}$.  Second, by the induction hypothesis
applied on the trivial scale,
\[
  \Pi \circ w  \in \kC_{N-S,0}(\kU,\kI; \{\kZ_j\}) \,,
\]
with the corresponding norm bounded by a polynomial with non-negative
coefficients in $\Norm{\Pi}{N,K+1,0,S} \geq \Norm{\Pi}{N,0,0,S,0}$ and
$\tnorm{w}{N,K+1,S} \geq \tnorm{w}{N,0,S}$.  Third, we claim that
\begin{equation}
  \label{e.DmuPiw1.c}
  \D_\mu (\Pi\circ w)  \in \kC_{N-1-S,K-S}(\kU,\kI; \{\kZ_j\}) \,,
\end{equation}
with the corresponding norm bounded by a polynomial with non-negative
coefficients in $\Norm{\Pi}{N,K+1,0,S}$ and $\tnorm{w}{N,K+1,S}$.
Then Lemma~\ref{l.technical1} applied to $\Pi \circ w$ where $N$ and
$K$ there correspond to $N-S$ and $K-S$ here proves that $\Pi \circ
w\in \kC_{N-S,K+1-S}$ with the required bound on its norm; this
concludes the inductive step.

It remains to prove claim \eqref{e.DmuPiw1.c}.  Following the steps
in the estimate of $\D_u(\Pi \circ w)$ above, we write
\begin{equation}
  \label{e.DmuPiw}
  \D_\mu (\Pi \circ w)(u,\mu) 
  = \partial_\mu \Pi (w(u,\mu); u, \mu) 
    + \partial_w \Pi (w(u,\mu); u, \mu) \, \D_\mu w 
\end{equation}
and consider each term on the right of \eqref{e.DmuPiw} separately.
For the first term, note that the assumption on $\Pi$ implies, in
particular, that $\partial_\mu \Pi \in \kC_{N,K+1,0,S+1}$ and that, by
assumption, $w \in \kC_{N, K+1, S+1}$.  Since $K-S$ is not increased,
the induction hypothesis applies to this pair of maps and yields
\begin{equation}
  \label{e.DmuPiw1}
  \partial_\mu \Pi \circ w
  \in \kC_{N-S-1,K-S} (\kU,\kI; \{\kZ_j\})
\end{equation}
with a polynomial bound in $\Norm{\Pi}{N,K+1,0,S}\geq
\Norm{\partial_\mu\Pi}{N,K+1,0,S+1}$ and $\tnorm{w}{N,K+1,S} \geq
\tnorm{w}{N,K+1,S+1}$.

For the second term on the right of \eqref{e.DmuPiw}, fix $r = \lVert
w \rVert_{N,K+1,S}$ and let $\kV_j= \kB_r^{\kZ_j}(0)$ for $j = S,
\dots, K+1$.  We saw above that the map $\hat\Pi$ from \eqref{e.hatPi}
is of class $\kC_{N-1,K,0,S}(\{\kV_j\},\kU,\kI;\{\kZ_j\})$ with norm
bounded as specified in Lemma \ref{l.DwPiClass}.  The assumption on
$w$ and the definition of $r$ above imply, moreover, that
\[
  \D_\mu w \in \kC_{N-1,K,S} (\kU, \kI; \{ \kV_j \}) \,.
\]
Thus, the induction hypothesis applied once more to this pair of maps
yields
\begin{equation}
  \hat \Pi \circ \partial_\mu w = \D_w (\Pi \circ w)\D_\mu w 
  \in \kC_{N-1-S,K-S} (\kU,\kI; \{\kZ_j\}) \,
  \label{e.DmuPiw2}
\end{equation}
with a polynomial bound in $\Norm{\Pi}{N,K+1,0,S}$ and
$\tnorm{w}{N,K+1,S} \geq \tnorm{\partial_\mu w}{N,K,S}$.  Together,
\eqref{e.DmuPiw1} and \eqref{e.DmuPiw2} imply \eqref{e.DmuPiw1.c} with
the required bound.
\end{proof}

In the proof of Lemma~\ref{l.cm-composition}, we implicitly proved the
following result which we state here for later reference.

\begin{lemma} \label{l.DwPiClass}
Let $\Pi$ and $w$ satisfy the conditions of
Lemma~\ref{l.cm-composition} with $L=0$; let  $r>0$ and $\kV_j
=\kB_r^{\kZ_j}(0)$ for $j=0,\ldots, K$.  Then  
\begin{equation}
  \label{e.hatPi}
  \hat \Pi (\hat{w}; u, \mu) 
  \equiv \D_w \Pi(w(u,\mu); u, \mu) \, \hat{w}
\end{equation}
satisfies
\[
  \hat \Pi \in \kC_{N-1,K,0,S}(\{\kV_j\}, \kU,\kI; \{ \kZ_j\})
\]
with a polynomial bound in $\tnorm{w}{N-1,K,S}$ and
$r \, \Norm{\Pi}{N,K,0,S}$.
\end{lemma}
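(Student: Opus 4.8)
The plan is to isolate the argument that is already carried out, en route, inside the proof of Lemma~\ref{l.cm-composition}: the assertion with $N-1$ in place of $N$ is precisely what is established there as an intermediate step. Throughout we are in the case $L=0$, so that $\Pi \in \kC_{N,K,0,S}(\{\kW_j\}, \kU, \kI; \{\kZ_j\})$ and $w \in \kC_{N,K,S}(\kU,\kI;\{\kW_j\})$, and $\kV_j = \kB_r^{\kZ_j}(0)$ for $j=0,\dots,K$.

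First I would slice the construction by the rung on which the auxiliary direction $\hat w$ lives. For each $\kappa = S, \dots, K$ and each $(u,\hat w) \in \kU \times \kV_\kappa$, set
\[
  \Pi_2^\kappa(w;(u,\hat w),\mu) = \D_w\Pi(w;u,\mu)\,\hat w \,.
\]
Since $\D_w\Pi(w;u,\mu)$ acts boundedly from $\kZ_{k+S}$ into $\kZ_{k-\ell}$ and $\kZ_\kappa$ embeds continuously into $\kZ_{k+S}$ whenever $k \leq \kappa - S$, this expression is well defined and lands in $\kZ_{k-\ell}$ for $k \leq \kappa - S$, $\ell \leq k$. Forming $\D_w\Pi$ and pairing it with $\hat w$ uses up one of the $N-S$ admissible $w$-differentiations of $\Pi$, while linearity in $\hat w$ makes all $\hat w$-derivatives beyond the first vanish; hence $\Pi_2^\kappa \in \kC_{N-1,\kappa,0,S}(\{\kW_j\}, \kU \times \kV_\kappa, \kI; \{\kZ_j\})$ with $\Norm{\Pi_2^\kappa}{N-1,\kappa,0,S} \leq r\,\Norm{\Pi}{N,K,0,S}$, using $\sup_{\hat w \in \kV_\kappa}\norm{\hat w}{\kZ_\kappa} = r$.

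Next I would feed the pair $(\Pi_2^\kappa, w)$ into the chain rule, Lemma~\ref{l.cm-composition}, viewing $w$ as a map into $\{\kW_j\}$ that is trivially extended to be constant in the $\hat w$-slot; since $w \in \kC_{N,K,S} \subset \kC_{N-1,\kappa,S}$ for $\kappa \leq K$, the lemma (with its $N$, $K$, $L$, $S$ equal to $N-1$, $\kappa$, $0$, $S$ here) yields
\[
  \Pi_2^\kappa \circ w \in \kC_{N-1-S,\kappa-S,0}(\kU \times \kV_\kappa, \kI; \{\kZ_j\})
\]
with norm bounded by a polynomial with non-negative coefficients in $\Norm{\Pi_2^\kappa}{N-1,\kappa,0,S} \leq r\,\Norm{\Pi}{N,K,0,S}$ and $\tnorm{w}{N-1,\kappa,S} \leq \tnorm{w}{N-1,K,S}$. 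Explicitly, $(\Pi_2^\kappa \circ w)((u,\hat w),\mu) = \D_w\Pi(w(u,\mu);u,\mu)\,\hat w = \hat\Pi(\hat w;u,\mu)$, so reading $\hat w$ back as the $w$-slot gives $\hat\Pi \in \kC_{N-1-S,\kappa-S}(\kV_\kappa \times \kU,\kI;\{\kZ_j\})$ for every $\kappa = S,\dots,K$. Collecting these over $\kappa$ by Lemma~\ref{l.technical3} (with its $N$, $K$, $L$, $S$ equal to $N-1$, $K$, $0$, $S$ here, and image scale $\{\kZ_j\}$) produces $\hat\Pi \in \kC_{N-1,K,0,S}(\{\kV_j\},\kU,\kI;\{\kZ_j\})$, and the norm equivalence in that lemma turns the uniform-in-$\kappa$ bounds above into a single polynomial bound in $\tnorm{w}{N-1,K,S}$ and $r\,\Norm{\Pi}{N,K,0,S}$, as claimed.

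The only genuinely delicate point is the bookkeeping in the second paragraph: one must check that differentiating $\Pi$ once in $w$ lowers the top regularity index by exactly one while leaving the scale length $K$ and the loss $S$ untouched, and that restricting $\hat w$ to $\kZ_\kappa$ is precisely what forces the image scale to be truncated at rung $\kappa-S$, so that the $\kappa$-indexed family genuinely reassembles through Lemma~\ref{l.technical3}. The radius $r$ enters, and only enters, through $\norm{\D_w\Pi(w;u,\mu)\,\hat w}{\kZ_{k-\ell}} \leq \norm{\D_w\Pi(w;u,\mu)}{} \, \norm{\hat w}{\kZ_\kappa} \leq r\,\norm{\D_w\Pi(w;u,\mu)}{}$; everything else is the already-proven chain rule together with a relabeling of variables, which is why the argument is short once Lemma~\ref{l.cm-composition} and Lemma~\ref{l.technical3} are available.
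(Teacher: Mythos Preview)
Your proposal is correct and follows essentially the same route as the paper: you define $\Pi_2^\kappa$, apply the chain rule Lemma~\ref{l.cm-composition} for each $\kappa=S,\dots,K$, and reassemble via Lemma~\ref{l.technical3}---this is precisely the argument the paper carries out inside the inductive step in $N$ of the proof of Lemma~\ref{l.cm-composition} (where the induction hypothesis plays the role of your direct appeal to the already-established Lemma~\ref{l.cm-composition}). The paper accordingly gives no separate proof of Lemma~\ref{l.DwPiClass}, stating only that it was ``implicitly proved'' in the proof of Lemma~\ref{l.cm-composition}.
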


\begin{remark}
The Fa\`a di Bruno formula (see, e.g., \cite{FaadiBruno}) can be used
to compute the derivatives of compositions of functions explicitly.
However, it does not remove the need to estimate complete $\kC_{N,K}$
norms.  Thus, an inductive argument seems to be the most manageable
way of writing out a proof.
\end{remark}

We now proceed to the crucial contraction mapping theorem for maps
$\Pi(\,\cdot\,; u, \mu)$ of class $\kC_{N,K}$.
  
\begin{theorem}[Contraction mappings on a scale of Banach spaces]
\label{t.cm-scale}
For $N, K \in \N_0$ with $N\geq K$, let ${\kZ} = {\kZ}_0 \supset
{\kZ}_1 \supset\ldots \supset {\kZ}_{K}$ be a scale of Banach spaces,
each continuously embedded in its predecessor, let $\kW_j\subset\kZ_j$
be a nested sequence of closures of open sets, let $\kX$ be a Banach
space, and let $\kU \subset \kX$ and $\kI \subset \R$ be open.  Let
$(w, u,\mu) \mapsto \Pi(w; u,\mu)$ be a nonlinear map such that
\begin{itemize}
\item[(i)] $\Pi \in \kC_{N,K}(\{\kW_j\}, \kU,\kI; \{ \kW_j\})$;
\item[(ii)] $w \mapsto \Pi(w; u, \mu)$ is a contraction on ${\kW}_j$
with contraction constant $c_j'<1$ uniformly for all $u \in {\kU}$,
$\mu \in \kI$, and $j = 0, \dots, K$.
\end{itemize}
Then the fixed point equation $\Pi(w; u,\mu) = w$ has a unique solution
\[
  w \in \kC_{N,K} (\kU, \kI; \{  {\kW}_j \})
\]
and $\tnorm{w}{N,K}$ is bounded by a function which is a polynomial
with non-negative coefficients in $\Norm{\Pi}{N,K}$ and
$(1-c_j')^{-1}$.
\end{theorem}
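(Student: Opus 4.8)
The plan is to combine the Banach fixed point theorem, applied rung by rung for fixed parameters, with a double induction on the pair $(N,K)$ in which every partial derivative of the fixed point is, in turn, characterized as the fixed point of a new \emph{affine} contraction and then controlled by the induction hypothesis at a lower level. First, each $\kW_j$ is the closure of an open set in the Banach space $\kZ_j$, hence complete; by (i) with $\ell=0$ the map $w\mapsto\Pi(w;u,\mu)$ sends $\kW_j$ into itself, and by (ii) it is a contraction with constant $c_j'<1$ uniformly in $(u,\mu)$. The Banach fixed point theorem thus gives, for each $(u,\mu)$, a unique fixed point in $\kW_0$; since the $\kW_j$ are nested, this point lies in $\kW_K$, and we call it $w(u,\mu)$. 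One has a priori $\norm{w(u,\mu)}{\kZ_j}\le\Norm{\Pi}{N,K}$ because $\Pi\in\kCb$ is bounded, and the standard estimate $\norm{w(u,\mu)-w(u',\mu')}{\kZ_j}\le(1-c_j')^{-1}\,\norm{\Pi(w(u',\mu');u,\mu)-\Pi(w(u',\mu');u',\mu')}{\kZ_j}$ yields uniform continuity (hence continuity up to the boundary). This disposes of the base case $N=K=0$: $w\in\kC_{0,0}(\kU,\kI;\{\kW_j\})$ with $\norm{w}{0,0}\le\Norm{\Pi}{0,0}$.

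For the inductive step in $N$ with $K$ fixed (from $(N,K)$ to $(N+1,K)$, so $\Pi\in\kC_{N+1,K}$), I would upgrade $w\in\kC_{N,K}$ to $w\in\kC_{N+1,K}$ via Lemma~\ref{l.technical2}, for which it suffices to show that $(u,\tilde u,\mu)\mapsto\D_uw(u,\mu)\tilde u$ is of class $\kC_{N,K}(\kU\times\kB_1^{\kX}(0),\kI;\{\kZ_j\})$. Differentiating the identity $w=\Pi(w;u,\mu)$ shows that this map should solve the affine fixed point equation $v=\partial_u\Pi(w(u,\mu);u,\mu)\tilde u+\D_w\Pi(w(u,\mu);u,\mu)\,v$. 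Because $\norm{\D_w\Pi(w;u,\mu)}{\kE(\kZ_j)}\le c_j'<1$, the right-hand side is a contraction of $\kB_r^{\kZ_j}(0)$ into itself for $r$ large enough; its inhomogeneous term $\partial_u\Pi\circ w$ is of class $\kC_{N,K}$ by the chain rule on the scale (Lemma~\ref{l.cm-composition}), and its linear term is of class $\kC_{N,K}$ by Lemma~\ref{l.DwPiClass}. Applying the induction hypothesis (the present theorem at level $(N,K)$, with the $\kW_j$ replaced by these balls and the enlarged parameter set) produces $D\in\kC_{N,K}(\kU\times\kB_1^{\kX}(0),\kI;\{\kZ_j\})$ with polynomially controlled norm; a routine difference-quotient argument using the fixed point identity and differentiability of $\Pi$ then identifies $D(u,\mu)\tilde u$ with $\D_uw(u,\mu)\tilde u$, and Lemma~\ref{l.technical2} closes the step.

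For the inductive step in $K$ (from $(N,K)$ to $(N,K+1)$, which requires $N>K$, so $\Pi\in\kC_{N,K+1}$), I would verify the three hypotheses of Lemma~\ref{l.technical1}. (a) $w\in\kC_{N,K+1,1}$: apply the present theorem on the shifted scale $\tilde\kZ_j=\kZ_{j+1}$, $\tilde\kW_j=\kW_{j+1}$, to which $\Pi$ still restricts as a uniform contraction; this gives $w\in\kC_{N-1,K}(\kU,\kI;\{\tilde\kW_j\})=\kC_{N,K+1,1}(\kU,\kI;\{\kW_j\})$. (b) $w\in\kC_{N,0,0}$, already contained in the previously established $\kC_{N,K}$. (c) $\partial_\mu w\in\kC_{N-1,K}$: as above, $\partial_\mu w$ solves $v=\partial_\mu\Pi(w(u,\mu);u,\mu)+\D_w\Pi(w(u,\mu);u,\mu)\,v$, but now the inhomogeneous term $\partial_\mu\Pi\circ w$ \emph{loses one rung} and is hence of class $\kC_{N-1,K}$ by Lemma~\ref{l.cm-composition} with loss index $S=1$ (using (a)), while the linear term is of class $\kC_{N-1,K}$ by Lemma~\ref{l.DwPiClass}; the induction hypothesis at level $(N-1,K)$ (legitimate since $N-1\ge K$) gives $\partial_\mu w\in\kC_{N-1,K}$. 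Lemma~\ref{l.technical1} then yields $w\in\kC_{N,K+1}$. Throughout, the norm bounds produced by Lemmas~\ref{l.cm-composition}, \ref{l.DwPiClass}, \ref{l.technical1}, \ref{l.technical2} and the lower-level induction hypotheses are polynomials with non-negative coefficients in the relevant $\Norm{\Pi}{\cdot,\cdot}$ and the $(1-c_j')^{-1}$, and composing such polynomials preserves this form, so the claimed bound on $\norm{w}{N,K}$ follows.

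The main obstacle is exactly the one flagged in the paper: the $\mu$-derivatives of $\Pi$, and hence of $w$, are only strongly continuous in the parameters and discontinuous in the operator norm, so one cannot simply differentiate the closed formula $w=(\id-\D_w\Pi)^{-1}(\partial_u\Pi\,\cdot\,)$ and invoke product and chain rules term by term. Every derivative must instead be recaptured as the fixed point of an affine contraction and estimated via a \emph{reduced} instance of the theorem together with the chain-rule and bookkeeping lemmas (Lemmas~\ref{l.cm-composition}--\ref{l.technical3}) — which is the reason the statement must be carried through the full family of $\kC_{N,K,L,S}$-spaces. The residual difficulty, which accounts for most of the length of a full proof, is the combinatorial verification that at each of the numerous sub-derivatives the object in question lands on the correct rung of the scale with the correct decrement of the regularity index $N$, and that the polynomial norm bounds propagate consistently.
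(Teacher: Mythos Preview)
Your proposal is correct and follows essentially the same double induction on $(N,K)$ as the paper's proof, using Lemma~\ref{l.technical2} for the $N$-step and Lemma~\ref{l.technical1} for the $K$-step, with the derivatives of $w$ recaptured as fixed points of affine contractions whose ingredients are controlled via Lemmas~\ref{l.cm-composition} and~\ref{l.DwPiClass}. The only point where the paper is more explicit is the identification of the formal fixed point $\tilde w$ with the actual derivative ($\D_u w\,\tilde u$ or $\partial_\mu w$): you call this ``a routine difference-quotient argument,'' whereas the paper invokes \cite[Theorem~4.8]{Wulff00} (cf.\ \cite{Vand87}), and for the $\mu$-derivative specifically applies it on the two-step scale $\{\kZ_j,\kZ_{j+1}\}$ to accommodate the loss of one rung---a detail you should make explicit, since this is exactly where the strong-versus-norm continuity issue bites.
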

 
Similar theorems were proved in \cite{Vand87} for the case $K=1$,
$\mybf{\kU}= \emptyset$ and in \cite{Wulff00} for the case $N=K\in\N$,
$\mybf{\kU}= \emptyset$.  Due to Lemma \ref{l.cm-scale}, the theorem
as stated here implies, in particular, that $w \in \kCb^K (\kU \times
\kI; \kW)$.  This simple statement on $\kC^K$ differentiability is
reminiscent of the standard form of the contraction mapping theorem
with parameters as, for example, stated in \cite[p.~13]{Henry}.

\begin{proof}[Proof of Theorem~\ref{t.cm-scale}]
The argument is once more an induction in $N$ and $K$, following the
combinatorial pattern of the proof of Lemma~\ref{l.cm-composition}.
For $N=K=0$, the regular contraction mapping theorem with parameters
asserts that $w \in \kC(\kU\times \kI; \kW)$.  Moreover, $\Pi$ is a
contraction uniformly for $(u,\mu) \in \cl(\kU)\times\cl(\kI)$ so that
$\Pi$ has a unique fixed point $w(u_*,\mu_*)$ also for $(u_*,\mu_*)$
on the boundary of $\kU \times \kI$.  From this, a straightforward
estimate yields continuity of $w$ up to the boundary; thus, $w \in
\kC_{0,0}(\kU,\kI;\{ \kW_0\})$.

Assume now that the conclusion of the theorem holds for fixed $K$ and
$N\geq K$.  We first employ Lemma~\ref{l.technical2} to show that the
conclusion also holds when we increment $N$, holding $K$ fixed.

As in the proof of Lemma~\ref{l.cm-composition}, we set $\kB \equiv
\kB_1^{\kX}(0)$ and let $(u,\tilde{u}) \in \kU \times \kB \equiv
\tilde\kU$.  Differentiating the fixed point equation $w=\Pi \circ w$
with respect to $u$, we find that $\D_u w \, \tilde u$ formally solves
the fixed point equation $\tilde w = \tilde \Pi (\tilde w;
(u,\tilde{u}), \mu)$, where
\begin{align*}
  \tilde \Pi (\tilde w; (u,\tilde{u}), \mu)
  & = \partial_w \Pi (w(u,\mu); u, \mu) \, \tilde w
      + \partial_u \Pi (w(u,\mu); u, \mu) \, \tilde u 
    \notag \\
  & \equiv \hat \Pi (\tilde w; u, \mu) 
      + \partial_u \Pi (w(u,\mu); u, \mu) \, \tilde u \,.
\end{align*}
Using the chain rule Lemma~\ref{l.cm-composition} and
Lemma~\ref{l.DwPiClass}, we infer that
\[
  \tilde \Pi \in \kC_{N,K}(\{\kV_{j}\},\kU\times \kB,\kI;\{\kZ_{j}\})
\]
with $\kV_j = \kB_r^{\kZ_j}(0)$ for $j = 0, \dots, K$ and arbitrary
$r>0$.  Here, we must prove in addition that $\tilde \Pi$ maps each of
the $\kV_0, \dots, \kV_K$ into itself.  Indeed, a direct estimate
shows that it suffices to take
\[
  r = \Norm{\Pi}{N+1,K,S} \, \max_{j=0,\ldots, K} \frac1{1-c'_j}
  \geq \max_{j=0,\ldots, K} 
       \frac{\norm{\partial_u \Pi \circ w}%
            {\kL_\infty(\kU\times \kI; \kE(\kX,\kZ_j))}}{1-c'_j} \,. 
\]
The induction hypothesis then applies to $\tilde \Pi \in \kC_{N,K} (\{
\kV_j \}, \kU\times \kB, \kI; \{ \kV_j \})$, yielding the existence of
a fixed point $\tilde w \in \kC_{N,K}(\kU\times \kB,\kI;\{\kV_{j}\})$.
  
It remains to be shown that the formal identity $\tilde w = \D_u w \,
\tilde u$ holds true on each $\kZ_j$ for $j=0,\ldots, K$.  This,
however, follows by \cite[Theorem 4.8]{Wulff00} (see also the proof of
\cite[Theorem 3]{Vand87}) applied to the one-parameter family of maps
$(w; \nu) \mapsto \Pi(w; u + \nu\tilde{u}, \mu)$ for fixed $\mu
\in\kI$, $u \in \kU$, and $\tilde{u} \in \kB$ on the scale
$\{\tilde\kZ_0, \tilde\kZ_1\} = \{\kZ_j, \kZ_j\}$ for each $j = 0,
\ldots, K$. 

Altogether, since $w \in \kC_{N,K}(\kU,\kI; \{\kW_j \})$,
Lemma~\ref{l.technical2} applies and yields gives $w \in
\kC_{N+1,K}(\kU,\kI; \{ \kW_j \})$; the inductive step in $N$ is
complete.

Next, we increment $K<N$ holding $N$ fixed.  For this, we use
Lemma~\ref{l.technical1}.  First, we note that assumptions (i) and
(ii) hold on the $K$-step scale $\kZ_1 \supset \dots \supset
\kZ_{K+1}$ so that the induction hypothesis applies; we find that
\[
  w \in \kC_{N,K+1,1} (\kU \times \kI; \{ \kW_{j}\} ) \,.
\]
Second, by the induction hypothesis applied on the trivial scale, $w
\in \kC_{N,0}$.  Third, differentiating the fixed point equation $w =
\Pi \circ w$ with respect to $\mu$, we obtain that $\partial_\mu w$
formally solves the fixed point equation $\tilde w = \tilde \Pi
(\tilde w; u, \mu)$, where
\begin{align*}
  \tilde \Pi (\tilde w; u, \mu)
  & = \partial_w \Pi (w(u,\mu); u, \mu) \, \tilde w
      + \partial_\mu \Pi (w(u,\mu); u, \mu) 
    \notag \\
  & \equiv \hat \Pi (\tilde w; u, \mu) 
      + \partial_\mu \Pi (w(u,\mu); u, \mu) \,.
\end{align*}
Since, by assumption, $w \in \kC_{N,K}$, we infer from
Lemma~\ref{l.cm-composition} and Lemma \ref{l.DwPiClass} that
\[
  \tilde \Pi 
  \in \kC_{N-1,K} (\{ \kV_j \},  \kU\times\kB, \kI; \{ \kZ_j \}) \,.
\]
Here, we need in addition that $\tilde \Pi$ maps each $\kV_0, \ldots,
\kV_K$ into itself.  This is satisfied whenever
\[ 
  r = \Norm{\Pi}{N,K+1,S} \, \max_{j=0,\ldots, K} \frac1{1-c'_j}
  \geq \max_{j=0,\ldots, K}
       \frac{\norm{\partial_\mu \Pi \circ w}%
            {\kL_\infty(\kU\times\kI;\kZ_j)}}{1-c'_j} \,.
\]
The induction hypothesis then applies to $\tilde \Pi \in \kC_{N-1,K}
(\{ \kV_j \}, \kU\times\kB, \kI; \{ \kV_j \})$, yielding the existence
of a fixed point $\tilde w \in \kC_{N-1,K}(\kU\times
\kB,\kI;\{\kV_{j}\})$.  By \cite[Theorem 4.8]{Wulff00} (see also the
proof of \cite[Theorem 3]{Vand87}), applied to $(w; \mu) \mapsto
\Pi(w;u,\mu)$ for each fixed $u \in \kU$ on the two-step scale
$\{\tilde\kZ_0, \tilde \kZ_1\} \equiv \{\kZ_j, \kZ_{j+1}\}$ for each
$j=0,\ldots, K$, we ensure that the formal identity $\tilde w
= \partial_\mu w$ holds true across the scale $\kZ_0, \dots \kZ_K$.
We conclude that $\partial_\mu w\in \kC_{N-1,K}$.

Altogether, Lemma~\ref{l.technical1} applies and yields $w \in
\kC_{N,K+1}$; the inductive step in $K$ is now complete.  We note that
the required polynomial bounds are obtained, as before, by carefully
tracking all the bounds in the respective norms through the argument.
We omit all detail.
\end{proof}

\section*{Acknowledgments}

C.W.~thanks the Courant Institute of Mathematical Sciences for their
hospitality during the preparation of parts of the manuscript, and
acknowledges funding by the Nuffield Foundation, by the Leverhulme
Foundation and by EPSRC grant EP/D063906/1.  M.O.~was visiting the
Courant Institute of Mathematical Sciences supported by a Max--Kade
Fellowship when part of this work was done, and further acknowledges
support through the ESF network Harmonic and Complex Analysis and
Applications (HCAA).

%%%%%%%%%%%%%%%%%%%%%%%%%%%%%%%%%%%%%%%%%%%%%%%%%%%%%%%%%%%%%%%%%%%
%%%%%%%%%%%%%%%%%%%%%%%%%%%%%%%%%%%%%%%%%%%%%%%%%%%%%%%%%%%%%%%%%%%

\bibliographystyle{plain}

\end{document}